\newcommand{\e}{\operatorname{e}}
\newcommand{\rd}{\mathrm{d}}
\newcommand{\im}{\operatorname{im}}
\newcommand{\id}{\operatorname{id}}
\newcommand{\diag}{\operatorname{diag}}
\DeclareMathOperator{\Span}{span}
\DeclareMathOperator{\tr}{tr}
\newcommand{\Hom}{\operatorname{Hom}}
\newcommand{\sign}{\operatorname{sign}}
\newcommand{\ev}{\operatorname{ev}}
\newcommand{\ad}{\operatorname{ad}}
\newtheorem{theorem}{Theorem}
\newtheorem{remark}{Remark}
\newtheorem{lemma}[remark]{Lemma}
\newtheorem{corollary}[remark]{Corollary}
\theoremstyle{definition}
\newtheorem{definition}[remark]{Definition}
\newtheorem{example}[remark]{Beispiel}
\begin{document}
\pagenumbering{arabic} 


\title{Symplectic Lie algebras with degenerate center\footnote{funded by the Deutsche Forschungsgemeinschaft (DFG, German Research Foundation) - KA 1065/7-1}}
\author{M. Fischer}
\maketitle
\onehalfspacing 
%

\abstract{Every symplectic Lie algebra with degenerate (including non-abelian nilpotent symplectic Lie algebras) has the structure of a quadratic extension. We give a standard model and describe the equivalence classes on the level of corresponding quadratic cohomology sets. Finally, we give a scheme to classify the isomorphism classes of symplectic Lie algebras with degenerate center. We also give a classification scheme for nilpotent symplectic Lie algebras and compute a complete list of all $6$-dimensional nilpotent symplectic Lie algebras, which removes some little inaccuracies in an older list.}

\section{Introduction}
In this paper we concentrate on symplectic Lie algebras $(\mathfrak g,\langle\cdot,\cdot\rangle,\omega)$.
Here a symplectic Lie algebra $(\mathfrak g,\omega)$ is a Lie algebra $\mathfrak g$ admitting a closed non-degenerate $2$-form $\omega$ on $\mathfrak g$, which we call symplectic form.
Two symplectic Lie algebras $(\mathfrak g_i,\omega_i)$, $i=1,2$ are isomorphic if there is an isomorphism $\varphi:\mathfrak g_1\rightarrow\mathfrak g_2$ of Lie algebras which preserves the symplectic forms in the sense $\varphi^*\omega_2=\omega_1$.
Symplectic Lie algebras are in one-to-one correspondence with simply connected Lie groups with leftinvariant symplectic forms.
These symplectic Lie groups are interesting, since they define compact symplectic spaces, as long as they posses discrete cocompact subgroups, and thus are a rich source of non-Kähler symplectic manifolds (see for example \cite{Th76}, \cite{BG88}, \cite{BC06}).
An interesting subset of  symplectic Lie algebras are Kähler respective pseudo-Riemannian Kähler Lie algebras, which were considered for example in \cite{DM962} and \cite{CFU04}.
Symplectic Lie algebras are also called quasi-Frobenius Lie algebras, since Frobenius Lie Algebras, i. e. Lie algebras admitting an non-degenerate exact $2$-form, are examples of symplectic Lie algebras.
Moreover, symplectic Lie algebras are examples of Vinberg algebras, since they naturally carry an affine structure.

There are several classifications of symplectic Lie algebras in low dimension (\cite{Sa01}, \cite{GJK01}, \cite{Ov06}, \cite{CS09}) and for the nilpotent case (\cite{Bu06}, \cite{GKM04}, see also \cite{GB87}).

If the aim is to give informations for arbitrary dimension, then usually a reduction scheme is used. In this context we mention 
symplectic double extensions (\cite{MR91}, \cite{DM96}, \cite{DM962}) and oxidation \cite{BC13}.
The main idea is, that for every isotropic ideal $\mathfrak j$ of a symplectic Lie algebra $\mathfrak g$ the Lie algebra $\overline{\mathfrak g}=\mathfrak j^\bot/\mathfrak j$ inherits a symplectic form from $\mathfrak g$.
Conversely, they give a construction scheme taking a symplectic Lie algebra $\overline{\mathfrak{g}}$ and some additional structure and construct a higher dimensional symplectic Lie algebra, which can be reduced to $\overline{\mathfrak{g}}$ again.
Since the choice of the isotropic ideal is in general not canonical, it is hard to to give a general statement on the isomorphy of these Lie algebras with the help of these schemes. For instance, it is possible that extensions of two non-isomorphic low dimensional symplectic Lie algebras are isomorphic.
Moreover, the presentation of a symplectic Lie algebra as such an extension is not unique, since it depends on the choosen ideal.

The aim of this paper is to take this choices canonically for symplectic Lie algebras such that there is a certain standard model and the possibility to analyse the isomorphy systematically with this standard model.
However, we cannot use this scheme for every symplectic Lie algebra. We restrict our observations to symplectic Lie algebras with degenerate center. This includes for instance non-abelian nilpotent symplectic Lie algebras. 
\\\quad\\
In the following, we describe the main idea of the present paper.
Therefor, we consider symplectic Lie algebras $(\mathfrak g,\omega)$ with degenerate center $\mathfrak z$. Then $\mathfrak j:=\mathfrak z\cap\mathfrak z^\bot$ is an nontrivial ideal of $\mathfrak g$, which is isotropic with respect to $\omega$. Moreover, $\omega$ induces a symplectic form $\overline\omega$ on $\mathfrak a:=\mathfrak j^\bot/\mathfrak j$, whereby $\mathfrak a$ becomes a symplectic Lie algbera. We set $\mathfrak l:=\mathfrak g/\mathfrak j^\bot$.
Since $\mathfrak j$ is isomorphic to $\mathfrak l^*$, we can describe $\mathfrak g$ with the help of two not necessary abelian extensions of Lie algebras
\begin{align}\label{eq:2aE}
0\rightarrow\mathfrak l^*\rightarrow\mathfrak g\rightarrow\mathfrak h\rightarrow 0,\quad 0\rightarrow\mathfrak a\rightarrow\mathfrak h\rightarrow\mathfrak l\rightarrow 0.
\end{align}
Here $\mathfrak h=\mathfrak g/\mathfrak i$.
Conversely, two extensions given as in (\ref{eq:2aE}) define a symplectic Lie algebra for every abelian Lie algebra $\mathfrak l$ and symplectic Lie algebra $\mathfrak a$, unless the extensions satisfy certain compatibility conditions.
Then the image of $\mathfrak l^*$ in $\mathfrak g$ is usually not equal to the canonical isotropic ideal $\mathfrak j$.

We will introduce this construction scheme under the notion of quadratic extensions.
In general, not every quadratic extension of $\mathfrak l$ by $(\mathfrak a,\omega_\mathfrak a)$ is given by the canonical isotropic ideal $\mathfrak j$.
Thus, we also introduce balanced quadratic extensions. We call a quadratic extension balanced, if the image of $\mathfrak l^*$ in $\mathfrak g$ equals the canonical isotropic ideal $\mathfrak z\cap\mathfrak z^\bot$.
Every symplectic Lie algebra with degenerate center has the structure of a balanced quadratic extension in a canonical way.
There is a natural equivalence relation on the set of quadratic extensions of $\mathfrak l$ by $\mathfrak a$.
Moreover, we define a non-linear cohomology set $H^2(\mathfrak{l},\mathfrak{a},\omega_\mathfrak a)$ of the abelian Lie algebra $\mathfrak l$ with coefficients in the symplectic Lie algebra $(\mathfrak a,\omega_\mathfrak a)$. Then we prove that the equivalence classes of quadratic extensions of $\mathfrak l$ by $(\mathfrak a,\omega_\mathfrak a)$ are in bijection to the second cohomology set $H^2(\mathfrak{l},\mathfrak{a},\omega_\mathfrak a)$.
Moreover, we give the standard model $\mathfrak{d}_{\gamma,\epsilon,\xi}(\mathfrak l,\mathfrak a)$ of a symplectic Lie algebra with, which defines a quadratic extension of $\mathfrak{l}$ by $\mathfrak{a}$ for every $[\gamma,\epsilon,\xi]\in H^2(\mathfrak{l},\mathfrak{a},\omega_\mathfrak a)$.

The equivalence classes of balanced quadratic extensions of $\mathfrak l$ by $(\mathfrak a,\omega_\mathfrak a)$ are described by $H^2(\mathfrak{l},\mathfrak{a},\omega_\mathfrak a)_\sharp\subset H^2_{Q+}(\mathfrak{l},D_\mathfrak{l},\mathfrak{a})$ on the level of the cohomology classes. This set can be characterised explicitly.
We also call these cocycles balanced.
The automorphism group $G(\mathfrak{l},\mathfrak{a},\omega_\mathfrak a)$ of the pair $(\mathfrak{l},\mathfrak{a},\omega_\mathfrak a)$ acts on $H^2(\mathfrak{l},\mathfrak{a},\omega_\mathfrak a)_\sharp$ and we obtain a bijection between the isomorphism classes of $2n$-dimensional symplectic Lie algebras with degenerate center and the set 
\[\coprod_{m=1}^n\quad\coprod_{[\mathfrak a,\omega_{\mathfrak a}]\in \operatorname{Isom}(2n-2m)} H(\mathds R^m,\mathfrak a,\omega_{\mathfrak a})_\sharp/G.\]
Here $\operatorname{Isom}(2n-2m)$ denotes a system of representatives of the isomorphism classes of symplectic Lie algebras of dimension $2n-2m$.

The next step is to apply this classification scheme on nilpotent symplectic Lie algebras. Whereas, for a symplectic Lie algebra with degenerate center its corresponding symplectic Lie algebra $\mathfrak j^\bot/\mathfrak j$ can have a non-degenerate center and thus the classification of all symplectic Lie algebras with degenerate center uses the knowledge of all symplectic Lie algebras of lower dimension (including those with non-degenerate center), this problem vanishes for nilpotent symplectic Lie algebras. Because a nilpotent symplectic Lie algebra has a non-degenerate center, if and only if it is abelian.

We determine when cocycles of the second cohomology  $H^2(\mathfrak{l},\mathfrak{a},\omega_\mathfrak{a})_\sharp$ define nilpotent symplectic Lie algebras.
We denote this set of cohomology classes by $H^2(\mathfrak{l},\mathfrak{a},\omega_\mathfrak{a})_0$ and obtain a bijection between the isomorphism classes of non-abelian nilpotent symplectic Lie algebras and
\[\coprod_{m=1}^n\quad\coprod_{[\mathfrak a,\omega_{\mathfrak a}]\in \mathfrak A(2n-2m)} H^2(\mathds R^m,\mathfrak a,\omega_{\mathfrak a})_0/G,\] 
where $\mathfrak A(2n-2m)$ denotes a system of representatives of the isomorphism classes of $(2n-2m)$-dimensional nilpotent symplectic Lie algebras.
The classification scheme for nilpotent symplectic Lie algebras provides several advantages. For one thing nilpotent symplectic Lie algebras can be determined systematically up to isomorphism, since the scheme does not depend on choices (as in \cite{MR91}, \cite{DM96}, \cite{DM962}, \cite{BC13}).
For another thing the classification of nilpotent symplectic Lie algebras of a fixed dimension does not use any other classifications, as for example a classification of all nilpotent Lie algebras of a fixed dimension (as in \cite{GKM04} for the classification of $6$-dimensional nilpotent symplectic Lie algebras).
We only need a system of representatives of the isomorphism classes of nilpotent symplectic Lie algebras of lower dimension, which also can be computed with this scheme.
Moreover, this classification scheme is universally usable. There are no additional restrictions to the Lie algebras.

We derive exemplarily a system of representatives of the isomorphism classes of $6$-dimensional nilpotent symplectic Lie algebras (see \cite{GKM04}, also compare to \cite{GB87}) on a new way using this classification scheme.
Therefor, we determine the $G$-orbits of $H^2(\mathfrak{l},\mathfrak{a},\omega_\mathfrak{a})_0$ for 
\[(\mathfrak{l},\mathfrak{a})\in\{(\mathds R,\mathds R^4),
(\mathds R,\mathfrak h_3\times\mathds R),
(\mathds R,\mathfrak n_4),
(\mathds R^3,\{0\}),
(\mathds R^2,\mathds R^2)
\}\]
and corresponding symplectic form $\omega_\mathfrak a$.
Besides, we remove some little inaccuracies in the list in \cite{GKM04}.
\\\quad\\
The paper is organized as follows.
We introduce the notion of (balanced) quadratic extensions in section \ref{SLA:quadrErw} and show that every symplectic Lie algebra with degenerate center has the structure of a (balanced) quadratic extension in a canonical way.
Furthermore, we give an equivalence relation on the set of quadratic extensions.
In section \ref{SLA:Std} we define the standard model $\mathfrak{d}_{\gamma,\epsilon,\xi}(\mathfrak l,\mathfrak a)$ and show necessary and sufficient conditions on the level of the corresponding cocycles (which we also define in section \ref{SLA:Std}), when $\mathfrak{d}_{\gamma,\epsilon,\xi}(\mathfrak l,\mathfrak a)$ has the structure of a (balanced) quadratic extension.
We show that every quadratic extension is equivalent to a suitable standard model (section \ref{SLA:ÄquivStd}) and describe the equivalence of standard models (section \ref{SLA:Äquivkl}).
In the end of section \ref{SLA:Äquivkl} we obtain a bijection between $H^2_{Q+}(\mathfrak{l},D_\mathfrak{l},\mathfrak{a})$ and the equivalence classes of quadratic extensions of $\mathfrak{l}$ by $\mathfrak{a}$. Here the cohomology set $H^2(\mathfrak l,\mathfrak a,\omega_\mathfrak a)$ and also $H^2(\mathfrak l,\mathfrak a,\omega_\mathfrak a)_\sharp$ are defined in section \ref{SLA:Äquivkl}.
In section \ref{SLA:Isomkl} we describe the isomorphy of standard models on the level of the corresponding cohomology classes and obtain, finally, the classification scheme for symplectic Lie algebras.
Afterwards, we calculate in section \ref{SLA:Anw} when Standard models $\mathfrak{d}_{\gamma,\epsilon,\xi}(\mathfrak l,\mathfrak a)$ are nilpotent symplectic Lie algebras and obtain a classification scheme for nilpotent symplectic Lie algebras.
Then we classify all nilpotent symplectic Lie algebras of dimension less than eight (section \ref{NSLA:1} - \ref{NSLA:5}) on a new way.

\section{Symplectic Lie algebras with degenerate center}\label{SLA}

At first, wie define symplectic Lie algebras.

\begin{definition}
A symplectic Lie algebra $(\mathfrak{g},\omega)$ is a real Lie algebra with a skewsymmetric non-degenerate bilinear form $\omega:\mathfrak{g}\times\mathfrak{g}\rightarrow\mathds{R}$,
which is closed, i. e. 
\[\rd\omega(X,Y,Z)=-\omega([X,Y],Z)+\omega([X,Z],Y)-\omega([Y,Z],X)=0\]
for every $X,Y,Z\in\mathfrak{g}$.
We call $\omega$ a symplectic form of $\mathfrak g$.

An isomorphism between two symplectic Lie algebras $(\mathfrak{g}_1, \omega_1)$ and $(\mathfrak{g}_2,\omega_2)$ is an isomorphism $\varphi:\mathfrak{g}_1\rightarrow\mathfrak{g}_2$ of Lie algebras, which satisfies 
$\varphi^*\omega_2=\omega_1$.
\end{definition}

Every symplectic Lie algebra with non-degenerate center $\mathfrak z$ equals the sum $(\mathfrak g,\omega)=(\mathfrak z,\omega|_{\mathfrak z\times \mathfrak z})\oplus(\mathfrak z^\bot,\omega|_{\mathfrak z^\bot\times \mathfrak z^\bot})$ of an abelian symplectic Lie algebra and and symplectic Lie algebra with trivial center. Here $\mathfrak z^\bot$ denotes the set of all vectors in $\mathfrak g$ being orthogonal to the center $\mathfrak z$ with respect to $\omega$, which is an ideal of $\mathfrak g$ in this situation, because of $\rd \omega=0$.
Hence, a classification of symplectic Lie algebras with non-degenerate center reduces to the classification of symplectic Lie algebras with trivial center.
In this paper we concentrate on symplectic Lie algebras with degenerate center,
which, for instance, includes non-abelian nilpotent symplectic Lie algebras.

\subsection{Quadratic extensions of symplectic Lie algebras}\label{SLA:quadrErw}

We introduce the notion of a quadratic extension of symplectic Lie algebras and show that every symplectic Lie algebra with degenerate center naturally has such a structure. Moreover, we define a equivalence relation on the set of quadratic extensions.
\\\quad\\
For every central isotropic ideal $\mathfrak j\subset \mathfrak z$ and an symplectic Lie algebra $(\mathfrak g,\omega_\mathfrak g)$ the set $\mathfrak j^\bot$ is also an ideal of $\mathfrak g$, since $\rd\omega_\mathfrak g=0$.
Moreover, $\omega_\mathfrak g$ induces a symplectic form $\overline\omega$ of the Lie algebra $\mathfrak j^\bot/\mathfrak j$.
This motivates the following definition.

\begin{definition}
A quadratic extension of an abelian Lie algebra $\mathfrak{l}$ by a symplectic one $(\mathfrak a,[\cdot,\cdot]_\mathfrak{a},\omega_\mathfrak{a})$ is an tuple $(\mathfrak{g},\mathfrak{j},i,p)$, where
\begin{itemize}
\item $\mathfrak g= (\mathfrak{g},[\cdot,\cdot]_\mathfrak{g},\omega_\mathfrak{g})$ is a symplectic Lie algebra,
\item $\mathfrak j\subset \mathfrak z$ is an ideal of $\omega_{\mathfrak g}$, which is isotropic with respect to $\omega_\mathfrak g$ and
\item $i$ and $p$ are homomorphisms of Lie algebras forming a short exact sequence of Lie algebras 
\[0\rightarrow \mathfrak a\xrightarrow{i}\mathfrak g/\mathfrak j\xrightarrow{p}\mathfrak l\rightarrow 0,\]
where $\im i=\mathfrak j^\bot/\mathfrak j$ and 
$i:\mathfrak{a}\rightarrow \mathfrak j^\bot/\mathfrak j$ is an isomorphism of symplectic Lie algebras.
\end{itemize}
We call a quadratic extension balanced, if $\mathfrak j=\mathfrak z\cap\mathfrak z^\bot$.
\end{definition}

\begin{theorem}\label{Th:1}
Every symplectic Lie algebra $(\mathfrak g, [\cdot,\cdot]_{\mathfrak g}, \omega_{\mathfrak g})$ has the structure of a balanced quadratic extension in a canonical way.
\end{theorem}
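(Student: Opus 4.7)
The plan is to construct the balanced quadratic extension directly from the canonical central isotropic ideal $\mathfrak{j} := \mathfrak{z} \cap \mathfrak{z}^\bot$, where $\mathfrak{z}$ is the center of $\mathfrak{g}$. This choice is canonical (it depends only on $\mathfrak{g}$ and $\omega_\mathfrak{g}$) and balanced by definition. The data of the quadratic extension will then be $\mathfrak{a} := \mathfrak{j}^\bot/\mathfrak{j}$ with the form $\overline{\omega}$ induced by $\omega_\mathfrak{g}$, $\mathfrak{l} := \mathfrak{g}/\mathfrak{j}^\bot$, the map $i$ coming from the inclusion $\mathfrak{j}^\bot \hookrightarrow \mathfrak{g}$ (passing to the quotient by $\mathfrak{j}$), and $p$ the projection $\mathfrak{g}/\mathfrak{j} \twoheadrightarrow \mathfrak{g}/\mathfrak{j}^\bot$.

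The first batch of checks is essentially the observation already made in the text: $\mathfrak{j}$ is central and isotropic by construction, and $\mathfrak{j}^\bot$ is an ideal because $\mathrm{d}\omega_\mathfrak{g}=0$. Concretely, for $X\in\mathfrak{g}$, $Y\in\mathfrak{j}^\bot$, $Z\in\mathfrak{j}\subset\mathfrak{z}$, the closedness identity yields $\omega_\mathfrak{g}([X,Y],Z)=\omega_\mathfrak{g}([X,Z],Y)-\omega_\mathfrak{g}([Y,Z],X)=0$. The induced form $\overline{\omega}$ on $\mathfrak{a}$ is well-defined (since $\omega_\mathfrak{g}(\mathfrak{j},\mathfrak{j}^\bot)=0$), closed (since $\mathrm{d}\omega_\mathfrak{g}=0$), and non-degenerate, because $(\mathfrak{j}^\bot)^\bot=\mathfrak{j}$ by non-degeneracy of $\omega_\mathfrak{g}$ — so if $X\in\mathfrak{j}^\bot$ pairs trivially with all of $\mathfrak{j}^\bot$, then $X\in\mathfrak{j}$.

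The crucial point I expect to be the main obstacle — and where closedness of $\omega_\mathfrak{g}$ is essential a second time — is verifying that $\mathfrak{l}=\mathfrak{g}/\mathfrak{j}^\bot$ is abelian, as demanded by the definition of quadratic extension. The same calculation as above shows $\omega_\mathfrak{g}([X,Y],Z)=0$ for all $X,Y\in\mathfrak{g}$ and $Z\in\mathfrak{j}$, hence $[\mathfrak{g},\mathfrak{g}]\subset\mathfrak{j}^\bot$, so $\mathfrak{l}$ is abelian. At the same time the pairing $\mathfrak{j}\times\mathfrak{l}\to\mathds{R}$, $(X,[Y])\mapsto\omega_\mathfrak{g}(X,Y)$ is well-defined by the same reason and non-degenerate (by non-degeneracy of $\omega_\mathfrak{g}$ together with the dimension identity $\dim\mathfrak{j}=\dim\mathfrak{g}-\dim\mathfrak{j}^\bot=\dim\mathfrak{l}$), so $\mathfrak{j}\cong\mathfrak{l}^*$ canonically.

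Finally, exactness of $0\to\mathfrak{a}\xrightarrow{i}\mathfrak{g}/\mathfrak{j}\xrightarrow{p}\mathfrak{l}\to 0$ is immediate from $\ker p = \mathfrak{j}^\bot/\mathfrak{j} = \im i$, and by construction $i$ identifies $(\mathfrak{a},\omega_\mathfrak{a})=(\mathfrak{j}^\bot/\mathfrak{j},\overline{\omega})$ with $\mathfrak{j}^\bot/\mathfrak{j}\subset\mathfrak{g}/\mathfrak{j}$ as symplectic Lie algebras. Since $\mathfrak{j}=\mathfrak{z}\cap\mathfrak{z}^\bot$, the resulting quadratic extension is balanced, and every step in the construction was canonical.
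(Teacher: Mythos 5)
Your proposal is correct and follows essentially the same route as the paper: take the canonical isotropic ideal $\mathfrak j=\mathfrak z\cap\mathfrak z^\bot$, form the two exact sequences, and use closedness of $\omega_{\mathfrak g}$ to get that $\mathfrak j^\bot$ is an ideal and that $[\mathfrak g,\mathfrak g]\subset\mathfrak j^\bot$, so $\mathfrak l=\mathfrak g/\mathfrak j^\bot$ is abelian. You spell out a few verifications the paper leaves implicit (well-definedness and non-degeneracy of $\overline\omega$, the identification $\mathfrak j\cong\mathfrak l^*$), but the argument is the same.
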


\begin{proof}
Let $\mathfrak j$ denote the isotropic ideal $\mathfrak j:=\mathfrak z\cap\mathfrak z^\bot$.
Then
\[0\rightarrow \mathfrak j\xrightarrow{\iota}\mathfrak g\xrightarrow{\pi}\mathfrak g/\mathfrak j\rightarrow 0\] is a short exact sequence, where $\iota$ and $\pi$ denote the canonical embedding and projection.
Since $\mathfrak j^\bot$ is an ideal of $\mathfrak g$, also $\mathfrak j^\bot/\mathfrak j$ is one in $\mathfrak g/\mathfrak j$ and
\[0\rightarrow \mathfrak j^\bot/\mathfrak j\xrightarrow{i}\mathfrak g/\mathfrak j\xrightarrow{p}\mathfrak g/\mathfrak j^\bot\rightarrow 0\]
is a short exact sequence, where $i$ and $p$ are the canonical embedding and projection.
Now, we set
\[\mathfrak a:=\mathfrak j^\bot/\mathfrak j, \quad \mathfrak l:=\mathfrak g/\mathfrak j^\bot.\]
It is clear that $\mathfrak a$ is a symplectic Lie algebra.
Moreover, from \[\omega_g([\mathfrak g,\mathfrak g],\mathfrak j)\subset\omega_g([\mathfrak g,\mathfrak j],\mathfrak g)=0\] we obtain that $\mathfrak l$ is abelian.
\end{proof}

\begin{remark}
Symplectic Lie algebras $\mathfrak{g}$ with non-degenerate center have the structure of a trivial balanced quadratic extension of $\{0\}$ by $\mathfrak{g}$. In particular, every symplectic Lie algebra with degenerate center obtains the structure of a nontrivial balanced quadratic extension.
Conversely, the corresponding symplectic Lie algebra $\mathfrak g$ of a nontrivial balanced quadratic extension $(\mathfrak g,\mathfrak j,i,p)$ has a degenerate center.
\end{remark}

\begin{definition}
Two quadratic extensions $(\mathfrak g_k,\mathfrak j_k,i_k,p_k)$, $k=1,2$, of $\mathfrak l$ by $\mathfrak a$ are equivalent, if there is an isomorphism $\Phi:\mathfrak g_1\rightarrow\mathfrak g_2$ of the corresponding symplectic Lie algebras, which maps $\mathfrak j_1$ to $\mathfrak j_2$ and satisfies
\[\overline\Phi\circ i_1=i_2,\quad p_2\circ\overline\Phi=p_1.\]

Here $\overline\Phi:\mathfrak g_1/\mathfrak j_1\rightarrow \mathfrak g_2/\mathfrak j_2$ denotes the map on the quotient induced by $\Phi$.
\end{definition}

\subsection{Factor system}\label{SLA:Faktorsys}

In this section we give the definition of a factor system.
These objects describe extensions of non-abelian Lie algebras \cite{Ne06} (see for instance \cite{MM53}, \cite{Ho54}, \cite{Sh66}),
which we will use for symplectic Lie algebras with degenerate center.

\begin{definition}
Let $\mathfrak l$ be a Lie algebra and $U,V,W$ $\mathfrak l$-modules.
Furthermore, let $\operatorname{m}:U\times V\rightarrow W$ be a $\mathfrak l$-equivariant bilinear mapping, i. e. $L\cdot \operatorname{m}(u,v)=\operatorname{m}(L\cdot u,v)+\operatorname{m}(u,L\cdot v)$ for all $L\in\mathfrak l$, $u\in U$ and $v\in V$.
There is a naturlal product \[\operatorname{m}(\cdot\wedge\cdot):C^p(\mathfrak l,U)\times C^q(\mathfrak l,V)\rightarrow C^{p+q}(\mathfrak l,W),\quad (\tau_1,\tau_2)\mapsto \operatorname{m}(\tau_1\wedge\tau_2),\]
which is given by 
\begin{align*}
\operatorname{m}(&\alpha\wedge\tau)(L_1,\dots,L_{p+q})\\
&=\sum_{[\sigma]\in\mathcal{S}_{p+q}/\mathcal{S}_{p}\times\mathcal{S}_{q}}sgn(\sigma)\operatorname{m}(\alpha(L_{\sigma(1)},\dots,L_{\sigma(p)}),\tau(L_{\sigma(p+1)},\dots,L_{\sigma(p+q)})). 
\end{align*}
Here $\mathcal S_k$ denotes the symmetric group of $k$ letters.
\end{definition}

\begin{example}
Let $\mathfrak l$ be a Lie algebra, $(\mathfrak a,[\cdot,\cdot]_{\mathfrak a},\omega_{\mathfrak a})$ a symplectic one and a trivial $\mathfrak l$-module. Furthermore, let $\mathfrak h$ denote a trivial $\mathfrak l$-module and we think of $C^1(\mathfrak a,\mathfrak h)$ as a trivial $\mathfrak l$-module.
We consider the bilinear map
\[\ev:C^1(\mathfrak a,\mathfrak h)\times\mathfrak a\rightarrow \mathfrak h,\quad  \ev(f,A):=f(A).\]
Then we have the multiplications
\begin{itemize}
\item $\ev(\cdot\wedge\cdot):C^p(\mathfrak l,C^1(\mathfrak a,\mathfrak h))\times C^q(\mathfrak l,\mathfrak a)\rightarrow C^{p+q}(\mathfrak l,\mathfrak h)$,
\item $\omega_{\mathfrak a}(\cdot\wedge\cdot):C^p(\mathfrak l,\mathfrak a)\times C^q(\mathfrak l,\mathfrak a)\rightarrow C^{p+q}(\mathfrak l,\mathds R)$ and 
\item $[\cdot\wedge\cdot]_{\mathfrak a}:C^p(\mathfrak l,\mathfrak a)\times C^q(\mathfrak l,\mathfrak a)\rightarrow C^{p+q}(\mathfrak l,\mathfrak a)$.
\end{itemize}
\end{example}

\begin{definition}
Let $\xi:\mathfrak l\rightarrow \Hom(\mathfrak a)$ be linear, $\mathfrak a$ a trivial $\mathfrak l$-module and denote $(C^*(\mathfrak{l},\mathfrak{a}),\rd)$ the corresponding cochain complex.
We consider
$\xi_\wedge:C^p(\mathfrak l,\mathfrak a)\rightarrow C^{p+1}(\mathfrak l,\mathfrak a)$, defined by $\tau\rightarrow\ev(\xi\wedge\tau)$.
We define the covariant differential $\rd_\xi:C^p(\mathfrak l,\mathfrak a)\rightarrow C^{p+1}(\mathfrak l,\mathfrak a)$ by
\[\rd_\xi\tau:=\xi_\wedge+\rd:C^p(\mathfrak l,\mathfrak a)\rightarrow C^{p+1}(\mathfrak l,\mathfrak a).\]
\end{definition}

For the covariant differential we have
\begin{align*}
(\rd_\xi\tau)(L_1,\dots ,L_{p+1}) =&\sum_{i=1}^{p+1}(-1)^{i+1}\xi(L_i)\tau(L_1,\dots,\hat{L_i},\dots,L_{p+1})\\
&+\sum_{i<j}(-1)^{i+j}\tau([L_i,L_j],L_1,\dots,\hat{L_i},\dots,\hat{L_j},\dots,L_{p+1})
\end{align*}
(see \cite{Ne06}).

\begin{remark}
The map $\rd_\xi$ is not a differential in the typical sense. 
This was already shown in \cite{Ne06}. 
Therein the map satisfies $\rd_\xi\circ\rd_\xi=0$, if and only if $\xi$ is an homomorphism of Lie algebras and hence a representation.
\end{remark}

Here we mention that every linear map $\xi:\mathfrak{l}\rightarrow\Hom(\mathfrak{a})$ defines a linear map $\hat\xi:\mathfrak l\rightarrow \mathfrak{gl}(C^1(\mathfrak a,\mathfrak l^*))$ by \[(\hat\xi(L))f=f\circ\xi(L).\]

\begin{definition}
In the following, let us assume that $\mathfrak a$ is a Lie algebra.
We call $(\xi,\alpha)\in C^1(\mathfrak l,\mathfrak{gl}(\mathfrak a))\oplus C^2(\mathfrak l,\mathfrak a)$ a factor system, if $\im\xi\subset\mathfrak{der}(\mathfrak a)$, $\rd\xi+\frac{1}{2}[\xi\wedge\xi]=\ad\circ \alpha$ and $\rd_\xi\alpha=0$.
Here $\mathfrak{der}(\mathfrak a)$ denotes the space of derivations of $\mathfrak a$.
\end{definition} 

\subsection{The standard model}\label{SLA:Std}

In this section we define the standard model of a quadratic extension of an abelian Lie algebra $\mathfrak l$ by a symplectic one $(\mathfrak a,\omega_\mathfrak a)$.
We give necessary and sufficient conditions for the existence of the standard model on the level of cocycles and decide when cocycles describe balanced quadratic extensions.
\\\quad\\
For a linear map $f:\mathfrak a\rightarrow\mathfrak l^*$ let $f^*:\mathfrak l\rightarrow\mathfrak a$ denote its dual map given by $(f(A))(L)=\omega_{\mathfrak a}(f^*(L),A)$.

\begin{definition}
Let $(\mathfrak a,[\cdot,\cdot]_{\mathfrak a},\omega_{\mathfrak a})$ be a symplectic Lie algebra and $\mathfrak l$ an abelian one. Moreover, consider the dual space $\mathfrak l^*$ of $\mathfrak l$ as an abelian Lie algebra.
Assume a linear map $\gamma:\mathfrak l\rightarrow\Hom(\mathfrak a,\mathfrak l^*)$, a $2$-form $\epsilon$ on $\mathfrak l$ with values in $\mathfrak l^*$ and a linear $\xi:\mathfrak l\rightarrow \mathfrak{gl}(\mathfrak a)$.
In the following, let $\beta$ denote the $2$-Form on $\mathfrak a$ with coefficients in $\mathfrak l^*$, which is defined by 
\begin{align}\label{def:beta}
\beta(A_1,A_2)=-\omega_{\mathfrak a}(\xi(\cdot)A_1,A_2)-\omega_{\mathfrak a}(A_1,\xi(\cdot)A_2)
\end{align}
for all $A_1,A_2\in\mathfrak a$.
Moreover, we set
\begin{align}
\beta_0(A)=\beta(A,\cdot)
\end{align}
for every $A\in\mathfrak a$. This map $\beta_0$ equals $\beta$ considered as an linear map from $\mathfrak a$ to $\Hom(\mathfrak a,\mathfrak l^*)$.
By $\alpha\in C^2(\mathfrak l,\mathfrak a)$ we denote, in the following, the map
\begin{align}\label{def:alpha}
\alpha(L_1,L_2)=(\gamma(L_1))^*L_2-(\gamma(L_2))^*L_1.
\end{align}
Now, we consider the vector space $\mathfrak l^*\oplus\mathfrak a\oplus\mathfrak l$ and define a skewsymmetric bilinear map
$[\cdot,\cdot]:=[\cdot,\cdot]_{\gamma,\epsilon,\xi}:(\mathfrak l^*\oplus\mathfrak a\oplus\mathfrak l)^2\rightarrow \mathfrak l^*\oplus\mathfrak a\oplus\mathfrak l$ by
\begin{align*}
[\mathfrak l^*,\mathfrak l^*\oplus \mathfrak a\oplus \mathfrak l]:&=0,\\
[A_1,A_2]:&=\beta(A_1,A_2)+[A_1,A_2]_{\mathfrak a}\in\mathfrak l^*\oplus\mathfrak a,\\
[L,A]:&=\gamma(L)A+\xi(L)(A)\in\mathfrak l^*\oplus\mathfrak a,\\
[L_1,L_2]:&=\epsilon(L_1,L_2)+\alpha(L_1,L_2)\in\mathfrak l^*\oplus\mathfrak a,
\end{align*}
for all $A,A_1,A_2\in\mathfrak a$ and $L,L_1,L_2\in\mathfrak l$.
Moreover, we define a skewsymmetric bilinear map $\omega:(\mathfrak l^*\oplus\mathfrak a\oplus\mathfrak l)^2\rightarrow\mathds R$, which is given by
\begin{align*}
\omega(Z_1+A_1+L_1,Z_2+A_2+L_2)=Z_1(L_2)+\omega_\mathfrak a(A_1,A_2)-Z_2(L_1)
\end{align*}
for every $Z_1,Z_2\in\mathfrak l^*$, $A_1,A_2\in\mathfrak a$ and $L_1,L_2\in\mathfrak l$.

For further considerations we give some more notation. We define $\xi_0:\mathfrak a\rightarrow \Hom(\mathfrak l,\mathfrak a)$ and $\gamma_0:\mathfrak a\rightarrow \Hom(\mathfrak l,\mathfrak l^*)$ by 
\begin{align}
\xi_0(A)L=\xi(L)A,\quad\gamma_0(A)L=\gamma(L)A
\end{align}
for all $A\in\mathfrak a$ and $L\in\mathfrak l$.
We omit brackets, if it is clear from the context, for instance $\xi(L)A$ instead of $(\xi(L))(A)$.
Moreover, we write $\gamma(L_1,A,L_2)$ for $((\gamma(L_1))(A))(L_2)$ as a simplification.
By $\mathfrak a^\mathfrak{l}_\xi$ we mean the space $\mathfrak a^\mathfrak l_\xi:=\ker \xi_0$.
If $\xi$ is a representation, then $\mathfrak a^\mathfrak l_\xi$ equals the space of invariants $\mathfrak a^\mathfrak l$.
\end{definition}

\begin{definition}
Let $\mathfrak l$ be an abelian Lie algebra and $(\mathfrak a,[\cdot,\cdot]_{\mathfrak a},\omega_{\mathfrak a})$ a symplectic Lie algebra, considered as a trivial $\mathfrak l$-module.
Let $Z^2(\mathfrak l,\mathfrak a,\omega_{\mathfrak a})$ denote the set of triples $(\gamma,\epsilon,\xi)$ of linear maps $\gamma:\mathfrak l\rightarrow\Hom(\mathfrak a,\mathfrak l^*)$, $\epsilon\in C^2(\mathfrak l,\mathfrak l^*)$ and $\xi:\mathfrak l\rightarrow\Hom(\mathfrak a)$, which satisfy the following conditions for every $L,L_1,L_2,L_3\in\mathfrak l$ and $A_1,A_2\in\mathfrak a$:
\begin{align}
&(\xi,\alpha)\text{ is a factor system},\label{eq:lemma1}\\
&\beta(\xi(L)A_1,A_2)+\beta(A_1,\xi(L)A_2)=(\gamma(L))[A_1,A_2]_{\mathfrak a},\label{eq:lemma2}\\
&\rd_{\hat\xi}\gamma+\beta_0\circ\alpha=0,\label{eq:lemma3}\\
&\ev(\gamma\wedge\alpha)=0,\label{eq:lemma4}\\
&\epsilon(L_1,L_2)L_3+\epsilon(L_2,L_3)L_1+\epsilon(L_3,L_1)L_2=0.\label{eq:lemma5}
\end{align}
Here $\alpha$ and $\beta$ are defined as in equation (\ref{def:alpha}) and (\ref{def:beta}).
\end{definition}

We call the elements of $Z^2(\mathfrak{l},\mathfrak{a},\omega_\mathfrak{a})$ quadratic cocycles.

\begin{theorem} \label{Thm:Z2}
The triple $\mathfrak d_{\gamma,\epsilon,\xi}(\mathfrak l,\mathfrak a):=(\mathfrak l^*\oplus\mathfrak a\oplus\mathfrak l,[\cdot,\cdot],\omega)$ is a symplectic Lie algebra if and only if $(\gamma,\epsilon,\xi)\in Z^2(\mathfrak l,\mathfrak a,\omega_{\mathfrak a})$.
\end{theorem}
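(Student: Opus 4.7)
The plan is to verify the three conditions that make $\mathfrak{d}_{\gamma,\epsilon,\xi}(\mathfrak{l},\mathfrak{a})$ a symplectic Lie algebra---skew-symmetry and Jacobi for $[\cdot,\cdot]$, non-degeneracy and closedness for $\omega$---and to match each independent piece bijectively with one of the five cocycle identities \eqref{eq:lemma1}--\eqref{eq:lemma5}. Non-degeneracy of $\omega$ is immediate from its block form: a hyperbolic pairing on $\mathfrak{l}^*\oplus\mathfrak{l}$ together with $\omega_\mathfrak{a}$ on $\mathfrak{a}$. Skew-symmetry of $[\cdot,\cdot]$ is built into the definition.

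The key observation for Jacobi is that $\mathfrak{l}^*$ lies in the center of $(\mathfrak{l}^*\oplus\mathfrak{a}\oplus\mathfrak{l},[\cdot,\cdot])$ by construction; hence every cyclic triple containing an $\mathfrak{l}^*$-entry satisfies Jacobi trivially, leaving four cases. For each I would expand the bracket and project onto the $\mathfrak{a}$- and $\mathfrak{l}^*$-summands separately. For $(A_1,A_2,A_3)$ the $\mathfrak{a}$-component is Jacobi in $\mathfrak{a}$, and the $\mathfrak{l}^*$-component, evaluated at any $L\in\mathfrak{l}$, equals the exterior derivative on $\mathfrak{a}$ of the $2$-form $\beta(\cdot,\cdot)(L)$; that form coincides (up to sign) with the natural action of the derivation $\xi(L)$ on $\omega_\mathfrak{a}$ and is therefore closed, since derivations commute with the Chevalley--Eilenberg differential. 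For $(L,A_1,A_2)$ the $\mathfrak{a}$-part is exactly $\xi(L)\in\mathfrak{der}(\mathfrak{a})$ and the $\mathfrak{l}^*$-part is exactly \eqref{eq:lemma2}. For $(L_1,L_2,A)$ the $\mathfrak{a}$-part reads $[\xi(L_1),\xi(L_2)]=\ad\alpha(L_1,L_2)$ (note $\rd\xi=0$, since $\mathfrak{l}$ is abelian and the action on $\mathfrak{gl}(\mathfrak{a})$ is trivial), while the $\mathfrak{l}^*$-part, after using $(\rd_{\hat\xi}\gamma)(L_1,L_2)=\gamma(L_2)\circ\xi(L_1)-\gamma(L_1)\circ\xi(L_2)$, is exactly \eqref{eq:lemma3}. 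For $(L_1,L_2,L_3)$ the $\mathfrak{a}$-part is $\rd_\xi\alpha=0$, and the $\mathfrak{l}^*$-part, once the definition of $\ev(\cdot\wedge\cdot)$ is unfolded, is exactly \eqref{eq:lemma4}. The three derivation-type identities just harvested assemble into the factor-system condition \eqref{eq:lemma1}.

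For closedness I would again argue by types of arguments. Triples carrying two or more $\mathfrak{l}^*$-entries, or an $\mathfrak{l}^*$-entry together with two $\mathfrak{l}$-entries, vanish trivially from how $\omega$ pairs the summands. The case $(A_1,A_2,A_3)$ reduces at once to $\rd\omega_\mathfrak{a}=0$; the mixed case $(A_1,A_2,L)$ cancels by the very definition \eqref{def:beta} of $\beta$; and $(L_1,L_2,A)$ cancels by the definition \eqref{def:alpha} of $\alpha$ together with the adjointness $(\gamma(L)A)(L')=\omega_\mathfrak{a}(\gamma(L)^*L',A)$. The only surviving non-trivial constraint is $(L_1,L_2,L_3)$, where a short calculation yields $\rd\omega(L_1,L_2,L_3)=-\epsilon(L_1,L_2)(L_3)+\epsilon(L_1,L_3)(L_2)-\epsilon(L_2,L_3)(L_1)$, which vanishes exactly when \eqref{eq:lemma5} holds.

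The main obstacle is organisational rather than conceptual: one has to keep signs and cyclic orderings straight, and interpret the differentials $\rd$, $\rd_\xi$, $\rd_{\hat\xi}$ and the product $\ev(\cdot\wedge\cdot)$ correctly on the relevant cochain complexes (all $\mathfrak{l}$-module structures appearing here being trivial). Once this case-by-case matching is in place, both directions of the biconditional drop out at once, since each of the five identities has been put in bijection with exactly one independent piece of Jacobi or of $\rd\omega=0$.
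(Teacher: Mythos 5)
Your proposal is correct and follows essentially the same route as the paper: dispose of non-degeneracy and of all Jacobi/closedness cases involving the central summand $\mathfrak l^*$ trivially, then match the four remaining Jacobi cases (split into their $\mathfrak a$- and $\mathfrak l^*$-components) with the factor-system condition and equations (\ref{eq:lemma2})--(\ref{eq:lemma4}), and finally observe that closedness of $\omega$ reduces, after the cancellations forced by the definitions of $\beta$ and $\alpha$, to (\ref{eq:lemma5}) on pure $\mathfrak l$-triples. The only cosmetic difference is your more conceptual treatment of the $(A_1,A_2,A_3)$ case via the fact that derivations commute with the Chevalley--Eilenberg differential, where the paper instead writes out the cyclic sum against $\rd\omega_{\mathfrak a}=0$ explicitly; the underlying computation is the same.
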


\begin{proof}
The vector space $\mathfrak l^*\oplus\mathfrak a\oplus\mathfrak l$ becomes a Lie algebra, if and only if $[\cdot,\cdot]$ satisfies the Jacobi identity.
We define
\[\operatorname{Jac}(X,Y,Z):=[[X,Y],Z]+[[Y,Z],X]+[[Z,X],Y]\]
for $X,Y,Z\in\mathfrak l^*\oplus\mathfrak a\oplus\mathfrak l$.
The map $[\cdot,\cdot]$, defined above, satisfies obviously 
$\operatorname{Jac}(Z,X_1,X_2)=0$ for every $Z\in\mathfrak l^*, X_1,X_2\in\mathfrak l^*\oplus\mathfrak a\oplus\mathfrak l$.
Now, we show that the other Jacobi identities are equivalent to the conditions  (\ref{eq:lemma1})-(\ref{eq:lemma4}).
Since 
\begin{align*}
\operatorname{Jac}(A_1,A_2,L)=&\,[[A_1,A_2]_{\mathfrak a},L]-[\xi(L)A_2,A_1]+[\xi(L)A_1,A_2]\\
=&\, -\xi(L)[A_1,A_2]_{\mathfrak a}-(\gamma(L))[A_1,A_2]_{\mathfrak a}-[\xi(L)A_2,A_1]_{\mathfrak a}-\beta(\xi(L)A_2,A_1)\\
&+[\xi(L)A_1,A_2]_{\mathfrak a}+\beta(\xi(L)A_1,A_2)
\end{align*}
for all $A_1,A_2\in\mathfrak a$, $L\in\mathfrak l$, the equation $\operatorname{Jac}(A_1,A_2,L)=0$ is equivalent to condition (\ref{eq:lemma2}) and $\xi(L)\in\mathfrak{der}(\mathfrak a)$.
Moreover, $\operatorname{Jac}(A_1,A_2,A_3)=0$ holds for all $A_1,A_2,A_3\in\mathfrak a$, if $\xi(L)$ is a derivation on $\mathfrak a$ for all $L\in\mathfrak l$, because of
\begin{align*}
\operatorname{Jac}(A_1,A_2,A_3)&=[[A_1,A_2]_{\mathfrak a},A_3]+[[A_2,A_3]_{\mathfrak a},A_1]+[[A_2,A_3]_{\mathfrak a},A_1]\\
&=\beta([A_1,A_2]_{\mathfrak a},A_3)+\beta([A_2,A_3]_{\mathfrak a},A_1)+\beta([A_3,A_1]_{\mathfrak a},A_2)\\
&=\sum_{\operatorname{cycl}(A_1,A_2,A_3)}\big(\omega_{\mathfrak a}(\xi(\cdot)[A_1,A_2]_{\mathfrak a},A_3)+\omega_{\mathfrak a}([A_1,A_2]_{\mathfrak a},\xi(\cdot)A_3)\big)
\end{align*}
and
\begin{align*}
0&=-\rd\omega_{\mathfrak a}(\xi(\cdot)A_1,A_2,A_3)-\rd\omega_{\mathfrak a}(A_1,\xi(\cdot)A_2,A_3)-\rd\omega_{\mathfrak a}(A_1,A_2,\xi(\cdot)A_3)\\
&=\sum_{\operatorname{cycl}(A_1,A_2,A_3)}\big(\omega_{\mathfrak a}([\xi(\cdot)A_1,A_2]_{\mathfrak a},A_3)+\omega_{\mathfrak a}([A_1,\xi(\cdot)A_2]_{\mathfrak a},A_3)+\omega_{\mathfrak a}([A_1,A_2]_{\mathfrak a},\xi(\cdot)A_3)\big).
\end{align*}
Using
\begin{align*}
[[A,&L_1],L_2]+[[L_1,L_2],A]+[[L_2,A],L_1]=-[\xi(L_1)A,L_2]+[\alpha(L_1,L_2),A]+[\xi(L_2)A,L_1]\\
=&\xi(L_2)\xi(L_1)A+(\gamma(L_2))(\xi(L_1)A)+[\alpha(L_1,L_2),A]_{\mathfrak a}\\
&+\beta(\alpha(L_1,L_2),A)-\xi(L_1)\xi(L_2)A-(\gamma(L_1))(\xi(L_2)A)
\end{align*}
we obtain that $\operatorname{Jac}(A,L_1,L_2)=0$ holds for all $A\in\mathfrak a$, $L_1,L_2\in\mathfrak l$ if and only if $\frac{1}{2}[\xi\wedge\xi]=\ad\circ\alpha$ and condition (\ref{eq:lemma3}) is satisfied.
Finally, $\operatorname{Jac}(L_1,L_2,L_3)=0$ for every $L_1,L_2,L_3\in \mathfrak l$ is equivalent to $\rd_\xi\alpha=0$ and condition (\ref{eq:lemma4}) because of 
\begin{align*}
\operatorname{Jac}(L_1,L_2,L_3)&=\sum_{\operatorname{cycl}(L_1,L_2,L_3)}[\alpha(L_1,L_2),L_3]\\
&=\sum_{\operatorname{cycl}(L_1,L_2,L_3)}\big(-\gamma(L_3)\alpha(L_1,L_2)-\big(\xi(L_3)\big)\big(\alpha(L_1,L_2)\big)\big).
\end{align*}
Altogether, the triple $(\mathfrak l^*\oplus\mathfrak a\oplus\mathfrak l,[\cdot,\cdot])$ is a Lie algebra if and only if the equations  (\ref{eq:lemma1}) - (\ref{eq:lemma4}) hold.

Now, the Lie-Algebra $(\mathfrak l^*\oplus\mathfrak a\oplus\mathfrak l,[\cdot,\cdot])$ with the skewsymmetric bilinear form $\omega$ is symplectic if and only if $\omega$ is closed.
Here $\omega$, as defined above, satisfies obviously
\[\rd\omega(\mathfrak l^*,\mathfrak l^*\oplus\mathfrak a\oplus\mathfrak l,\mathfrak l^*\oplus\mathfrak a\oplus\mathfrak l)=\omega([\mathfrak l^*\oplus\mathfrak a\oplus\mathfrak l,\mathfrak l^*\oplus\mathfrak a\oplus\mathfrak l],\mathfrak l^*)\subset\omega(\mathfrak l^*\oplus\mathfrak a,\mathfrak l^*)=0\]
and $\rd\omega(\mathfrak a,\mathfrak a,\mathfrak a)=\rd\omega_a(\mathfrak a,\mathfrak a,\mathfrak a)=0$. 
Moreover, $\rd\omega(\mathfrak a,\mathfrak a,\mathfrak l)=0$ and $\rd\omega(\mathfrak a,\mathfrak l,\mathfrak l)=0$ hold, since
\begin{align*}
-&\rd \omega(A_1,A_2,L)=\omega([A_1,A_2],L)+\omega([L,A_1],A_2)+\omega([A_2,L],A_1)\\
&=\omega([A_1,A_2]_{\mathfrak a}+\beta(A_1,A_2),L)+\omega(\xi(L)A_1+(\gamma(L))A_1,A_2)-\omega(\xi(L)A_2+(\gamma(L))A_2,A_1)\\
&=\beta(A_1,A_2)L+\omega_{\mathfrak a}(\xi(L)A_1,A_2)+\omega_{\mathfrak a}(A_1,\xi(L)A_2)=0
\end{align*}
and 
\begin{align*}
-&\rd \omega(A,L_1,L_2)=\omega([A,L_1],L_2)+\omega([L_1,L_2],A)+\omega([L_2,A],L_1)\\
&=-\omega(\xi(L_1)A+(\gamma(L_1))A,L_2)+\omega(\alpha(L_1,L_2)+\epsilon(L_1,L_2),A)+\omega(\xi(L_2)A+(\gamma(L_2))A,L_1)\\
&=-\gamma(L_1,A,L_2)+\gamma(L_2,A,L_1)+\omega_{\mathfrak a}(\alpha(L_1,L_2),A)=0
\end{align*}
for all $A,A_1,A_2\in\mathfrak a$ und $L,L_1,L_2\in\mathfrak l$.
Thus, it remains to show that $\rd \omega(L_1,L_2,L_3)=0$ for all $L_1,L_2,L_3\in\mathfrak l$ is equivalent to $\sum_{\operatorname{cycl}(L_1,L_2,L_3)}\epsilon(L_1,L_2)L_3=0$ and this follows directly from
\[-\rd\omega(L_1,L_2,L_3)=\sum_{\operatorname{cycl}(L_1,L_2,L_3)}\big(\omega(\alpha(L_1,L_2)+\epsilon(L_1,L_2),L_3)\big)=\sum_{\operatorname{cycl}(L_1,L_2,L_3)}\epsilon(L_1,L_2)L_3.\]
\end{proof}

We identify $\mathfrak d_{\gamma,\epsilon,\xi}(\mathfrak l,\mathfrak a)/\mathfrak{l}^*$ with $\mathfrak a\oplus\mathfrak l$. Moreover, let $i:\mathfrak
a\rightarrow\mathfrak a\oplus\mathfrak l$ and $p:\mathfrak
a\oplus\mathfrak l\rightarrow\mathfrak l$ denote the canonical embedding and projection.

Using Theorem \ref{Thm:Z2} we get directly that $(\mathfrak d_{\gamma,\epsilon,\xi}(\mathfrak l,\mathfrak a),\mathfrak l^*,i,p)$ is a quadratic extension of $\mathfrak l$ by $(\mathfrak a,\omega_\mathfrak a)$ if and only if $(\gamma,\epsilon,\xi)$ is an element of $Z^2(\mathfrak l,\mathfrak a,\omega_{\mathfrak a})$.
In the following, we simply write $\mathfrak d_{\gamma,\epsilon,\xi}(\mathfrak l,\mathfrak a)$ for the quadratic extension $(\mathfrak d_{\gamma,\epsilon,\xi}(\mathfrak l,\mathfrak a),\mathfrak l^*,i,p)$.

\begin{definition}\label{SLA:def:balanced}
We call a triple $(\gamma,\epsilon,\xi)$ in $Z^2(\mathfrak l,\mathfrak a,\omega_{\mathfrak a})$ balanced, if it satisfies the following two conditions:
\begin{itemize}\label{eq:aundb}
\item[(a)] 
The set $\mathds L$ of all elements $L\in\mathfrak l$, which satisfy all the equations
\[\ad_{\mathfrak a}(A)+\xi(L)=0\quad (\text{a}.1), \quad\gamma(L)+\beta_0(A)=0\quad (\text{a}.2),\] \[\alpha(L,\cdot)=\xi_0(A)\quad (\text{a}.3)\quad \text{ and }\quad \epsilon(L,\cdot)=\gamma_0(A)\quad (\text{a}.4)\]
for a suitable $A\in\mathfrak a$ at ones, contains only the zero vector.
\item[(b)] The subspace $\mathfrak z(\mathfrak a)\cap \ker \beta_0\cap \mathfrak a^\mathfrak l_\xi\cap\ker\gamma_0$ of $\mathfrak a$ is non-degenerate with respect to $\omega_\mathfrak a$.
\end{itemize}
Let $Z^2(\mathfrak l,\mathfrak a,\omega_{\mathfrak a})_\sharp$ denote the set of all balanced triple of $Z^2(\mathfrak l,\mathfrak a,\omega_{\mathfrak a})$.
\end{definition}

It follows that balanced triple define balanced quadratic extensions.
\begin{theorem}\label{Thm:xa}
The quadratic extension $\mathfrak d_{\gamma,\epsilon,\xi}(\mathfrak l,\mathfrak a)$ is balanced if and only if $(\gamma,\epsilon,\xi)$ is an element in $Z^2(\mathfrak l,\mathfrak a,\omega_{\mathfrak a})_\sharp$.
\end{theorem}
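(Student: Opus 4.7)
Write $\mathfrak g=\mathfrak d_{\gamma,\epsilon,\xi}(\mathfrak l,\mathfrak a)$ and $\mathfrak j=\mathfrak l^*$, and recall that by construction $\mathfrak l^*$ is central and isotropic, so $\mathfrak l^*\subseteq\mathfrak z(\mathfrak g)$ and the inclusion $\mathfrak l^*\subseteq\mathfrak z(\mathfrak g)\cap\mathfrak z(\mathfrak g)^\bot$ holds as soon as $\mathfrak l^*\subseteq\mathfrak z(\mathfrak g)^\bot$. The goal is to identify when the reverse inclusion $\mathfrak z(\mathfrak g)\cap\mathfrak z(\mathfrak g)^\bot\subseteq\mathfrak l^*$ holds; condition (a) will govern when the first containment actually occurs, and condition (b) will give the second.

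First I would compute the center. For $X=Z+A+L\in\mathfrak l^*\oplus\mathfrak a\oplus\mathfrak l$ the vanishing of $[X,\cdot]$ on $Y=Z''+A''+L''$ splits, using the definition of $[\cdot,\cdot]_{\gamma,\epsilon,\xi}$, into an $\mathfrak l^*$-part and an $\mathfrak a$-part. Letting $A''$ vary with $L''=0$ gives the pair $\gamma(L)+\beta_0(A)=0$ and $\ad_{\mathfrak a}(A)+\xi(L)=0$, while letting $L''$ vary with $A''=0$ gives $\epsilon(L,\cdot)=\gamma_0(A)$ and $\alpha(L,\cdot)=\xi_0(A)$. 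These are precisely conditions (a.1)--(a.4). In particular the projection of $\mathfrak z(\mathfrak g)$ to $\mathfrak l$ equals $\mathds L$, and if we impose $L=0$ the four conditions collapse to $A\in\mathfrak z(\mathfrak a)\cap\ker\beta_0\cap\mathfrak a^\mathfrak l_\xi\cap\ker\gamma_0=:V$.

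Next I would use the explicit form of $\omega$ to read off the $\omega$-orthogonal of $\mathfrak z(\mathfrak g)$. Since $\mathfrak l^*\subseteq\mathfrak z(\mathfrak g)$ we automatically have $\mathfrak z(\mathfrak g)^\bot\subseteq(\mathfrak l^*)^\bot=\mathfrak l^*\oplus\mathfrak a$, so any $Y\in\mathfrak z(\mathfrak g)^\bot$ has trivial $\mathfrak l$-part. Moreover $\mathfrak l^*\subseteq\mathfrak z(\mathfrak g)^\bot$ is equivalent to $Z_0(L)=0$ for all $Z_0\in\mathfrak l^*$ and all $L$ appearing as an $\mathfrak l$-component of a central element, which is exactly the requirement $\mathds L=\{0\}$, i.e.\ condition (a).

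Assuming (a), the center is $\mathfrak z(\mathfrak g)=\mathfrak l^*\oplus V$ by the first step. For $Y=Z+A\in\mathfrak l^*\oplus V$ and a central $X=Z'+A'$ with $A'\in V$ the pairing reduces to $\omega(Y,X)=\omega_{\mathfrak a}(A,A')$, so
\[
\mathfrak z(\mathfrak g)\cap\mathfrak z(\mathfrak g)^\bot=\mathfrak l^*\oplus\bigl\{A\in V:\omega_\mathfrak a(A,V)=0\bigr\}=\mathfrak l^*\oplus V^{\perp_{\omega_\mathfrak a}\cap V}.
\]
This equals $\mathfrak l^*$ if and only if $\omega_{\mathfrak a}|_{V\times V}$ is non-degenerate, which is condition (b). Conversely, if $\mathds L\neq\{0\}$, then $\mathfrak l^*\not\subseteq\mathfrak z(\mathfrak g)^\bot$, and in fact an element of $\mathfrak l^*$ dual to a nonzero $L\in\mathds L$ pairs nontrivially with a central element, so the balanced condition already fails at the level of (a); hence the extension being balanced forces both (a) and (b).

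The only real care needed is the bookkeeping in the four subconditions of (a) and the observation that $\mathfrak l^*\subseteq\mathfrak z(\mathfrak g)^\bot$ is itself equivalent to (a); everything else is a direct evaluation of $\omega$ on the decomposition $\mathfrak l^*\oplus\mathfrak a\oplus\mathfrak l$.
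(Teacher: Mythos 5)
Your proposal is correct and follows essentially the same route as the paper: identify conditions (a.1)--(a.4) as the equations cutting out the center, observe that condition (a) is exactly the statement that $\mathfrak z\subseteq\mathfrak l^*\oplus\mathfrak a$ (equivalently $\mathfrak l^*\subseteq\mathfrak z^\bot$), and then reduce the remaining question to non-degeneracy of $\omega_\mathfrak a$ on $\mathfrak z\cap\mathfrak a=\mathfrak z(\mathfrak a)\cap\ker\beta_0\cap\mathfrak a^\mathfrak l_\xi\cap\ker\gamma_0$, which is condition (b). The only difference is that you carry out the bracket and pairing computations explicitly where the paper leaves them as remarks, so no gap.
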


\begin{proof}
It remains to show that $\mathfrak l^*=\mathfrak z\cap\mathfrak z^\bot$ holds if and only if the conditions in Definition \ref{SLA:def:balanced} are satisfied.
But before, we show that the first condition is equivalent to $\mathfrak z\subset\mathfrak l^*\oplus\mathfrak a$:
The center $\mathfrak z$ is a subset of $\mathfrak l^*\oplus\mathfrak a$, if and only if the equations $[A+L,A']=0$ and $[A+L,L']=0$ imply $L=0$ for all $A+L,A'+L'\in\mathfrak a\oplus\mathfrak l$. 
This implication is equivalent to the first condition.

The second condition holds if and only if $\mathfrak z\cap\mathfrak a$ is non-degenerate with respect to $\omega_\mathfrak a$. In addition, it is not hard to see that $\mathfrak z\subset \mathfrak l^*\oplus\mathfrak a$ implies the equality \[(\mathfrak z\cap\mathfrak a)\cap(\mathfrak z\cap\mathfrak a)^\bot=\mathfrak z\cap\mathfrak a\cap\mathfrak z^\bot.\]

Now, we prove the theorem.
Assume $\mathfrak z\cap\mathfrak z^\bot=\mathfrak l^*$.
Then $\mathfrak l^*$ is a subset of $\mathfrak z^\bot$ and thus $\mathfrak z$ a subset of $\mathfrak l^*\oplus\mathfrak a$.
Hence, the first condition holds.
In addition, $\mathfrak z\cap\mathfrak z^\bot\cap\mathfrak a=\{0\}$ holds and therby the second condition.
Conversely, suppose that condition (a) and (b) hold. Then $\mathfrak z\subset\mathfrak l^*\oplus\mathfrak a$ and hence $\mathfrak l^*\subset\mathfrak z\cap(\mathfrak l^*\oplus\mathfrak a)^\bot\subset\mathfrak z\cap\mathfrak z^\bot$.
Moreover, $\mathfrak z\cap\mathfrak z^\bot\cap\mathfrak a=(\mathfrak z\cap\mathfrak a)\cap(\mathfrak z\cap\mathfrak a)^\bot=\{0\}$ and we obtain $\mathfrak z\cap\mathfrak z^\bot\subset\mathfrak l^*$. This yields the assumption.
\end{proof}

\subsection{Equivalence to the standard model}\label{SLA:ÄquivStd}

The term standard model for $\mathfrak{d}_{\gamma,\epsilon,\xi}$ is motivated by the fact that every quadratic extension of symplectic Lie algebras is equivalent to $\mathfrak{d}_{\gamma,\epsilon,\xi}$ for some suitable $(\gamma,\epsilon,\xi)\in Z^2(\mathfrak{l},\mathfrak{a},\omega_\mathfrak a)$.
Showing this is the task of this section.

\begin{theorem}\label{Th:2}
For every quadratic extension $(\mathfrak g,\mathfrak j,i,p)$ of $\mathfrak l$ by $\mathfrak a$ there is a triple $(\gamma,\epsilon,\xi)\in Z^2(\mathfrak l,\mathfrak a,\omega_{\mathfrak a})$ such that $(\mathfrak g,\mathfrak j,i,p)$ is equivalent to $\mathfrak d_{\gamma,\epsilon,\xi}(\mathfrak l,\mathfrak a)$.
Moreover, for balanced quadratic extensions $(\mathfrak g,\mathfrak j,i,p)$ the corresponding cocycle $(\gamma,\epsilon,\xi)$ is balanced.
\end{theorem}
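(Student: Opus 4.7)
The plan is to reverse the construction from Theorem \ref{Thm:Z2}. Starting from the filtration $\mathfrak j\subset\mathfrak j^\bot\subset\mathfrak g$, I would pick vector-space splittings $\sigma:\mathfrak l\to\mathfrak g$ of $\mathfrak g\to\mathfrak g/\mathfrak j^\bot=\mathfrak l$ and $s:\mathfrak a\to\mathfrak j^\bot$ with $\pi\circ s=i$ (where $\pi:\mathfrak j^\bot\to\mathfrak j^\bot/\mathfrak j$ is the quotient map), and use the canonical isomorphism $\iota:\mathfrak l^*\to\mathfrak j$ induced by the non-degenerate pairing $\mathfrak j\times(\mathfrak g/\mathfrak j^\bot)\to\mathds R$ coming from $\omega_{\mathfrak g}$, so that $\omega_{\mathfrak g}(\iota(Z),g)=Z(\overline g)$ for $g\in\mathfrak g$ with class $\overline g\in\mathfrak l$. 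This produces the candidate equivalence $\Phi:\mathfrak l^*\oplus\mathfrak a\oplus\mathfrak l\to\mathfrak g$ defined by $\Phi(Z,A,L):=\iota(Z)+s(A)+\sigma(L)$.

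A single normalization is needed to make $\Phi$ symplectic: replacing $\sigma$ by $\sigma+s\circ\tau$ for the unique $\tau:\mathfrak l\to\mathfrak a$ supplied by non-degeneracy of $\omega_{\mathfrak a}$, one arranges $\omega_{\mathfrak g}(s(A),\sigma(L))=0$ for all $A,L$. With this in place, a direct check using isotropy of $\mathfrak j$, the defining identity for $\iota$, and $i^*\overline\omega=\omega_{\mathfrak a}$ (which holds because $i$ is an isomorphism of symplectic Lie algebras) shows $\Phi^*\omega_{\mathfrak g}=\omega$.

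To extract the cocycle, observe that $\mathfrak j^\bot$ is an ideal, $\mathfrak l$ is abelian, and $p\circ i=0$, so the brackets $[\sigma(L_1),\sigma(L_2)]$, $[\sigma(L),s(A)]$ and $[s(A_1),s(A_2)]-s([A_1,A_2]_{\mathfrak a})$ all lie in $\mathfrak j\oplus s(\mathfrak a)$. Decomposing them into the direct sum defines $\epsilon,\gamma,\xi$ together with auxiliary maps $\alpha',\beta'$; brackets involving $\mathfrak j$ vanish because $\mathfrak j\subset\mathfrak z$. Applying $\rd\omega_{\mathfrak g}=0$ to the triples $(\sigma(L_1),s(A),\sigma(L_2))$ and $(s(A_1),s(A_2),\sigma(L))$ then yields $\alpha'(L_1,L_2)=(\gamma(L_1))^*L_2-(\gamma(L_2))^*L_1$ and $\beta'(A_1,A_2)(L)=-\omega_{\mathfrak a}(\xi(L)A_1,A_2)-\omega_{\mathfrak a}(A_1,\xi(L)A_2)$, matching (\ref{def:alpha}) and (\ref{def:beta}) exactly. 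These are the only substantive computations. Since $\Phi$ is then a symplectic isomorphism onto $\mathfrak g$ that transports the standard bracket to $[\cdot,\cdot]_{\mathfrak g}$, Theorem \ref{Thm:Z2} immediately forces $(\gamma,\epsilon,\xi)\in Z^2(\mathfrak l,\mathfrak a,\omega_{\mathfrak a})$ --- no direct check of the five cocycle relations is required. Equivalence of $(\mathfrak g,\mathfrak j,i,p)$ with $\mathfrak d_{\gamma,\epsilon,\xi}(\mathfrak l,\mathfrak a)$ is built in: $\Phi$ maps $\mathfrak l^*$ onto $\mathfrak j$ and intertwines the inclusion and projection on the quotient by the very definition of $s$, $\sigma$.

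The balanced claim is then immediate: a symplectic Lie algebra isomorphism preserves centers and $\omega$-orthogonal complements, so $\mathfrak j=\mathfrak z\cap\mathfrak z^\bot$ in $\mathfrak g$ transports to $\mathfrak l^*=\mathfrak z\cap\mathfrak z^\bot$ in $\mathfrak d_{\gamma,\epsilon,\xi}(\mathfrak l,\mathfrak a)$, and Theorem \ref{Thm:xa} closes the argument. I expect the principal obstacle to be executing the symplectic normalization cleanly together with the two closedness identities that pin down $\alpha'$ and $\beta'$; the rest is structural bookkeeping that offloads every cocycle axiom onto Theorem \ref{Thm:Z2}.
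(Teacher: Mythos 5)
Your construction is the paper's own, step for step: decompose $\mathfrak g$ as $\mathfrak j\oplus V_{\mathfrak a}\oplus V_{\mathfrak l}$, identify $\mathfrak j\cong\mathfrak l^*$ through the $\omega_{\mathfrak g}$-pairing with $\mathfrak g/\mathfrak j^\bot$, read off $(\gamma,\epsilon,\xi)$ from the brackets, use $\rd\omega_{\mathfrak g}=0$ on the mixed triples to show that the leftover bracket components are exactly the $\alpha$ and $\beta$ determined by $\gamma$ and $\xi$ via (\ref{def:alpha}) and (\ref{def:beta}), and then let Theorem \ref{Thm:Z2} supply the cocycle conditions; the balanced claim via $\Phi(\mathfrak l^*)=\mathfrak j=\mathfrak z\cap\mathfrak z^\bot$ and Theorem \ref{Thm:xa} is also identical.

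There is, however, one concrete gap in your normalization step. In the standard model $\omega(L_1,L_2)=0$ for $L_1,L_2\in\mathfrak l$, so $\Phi^*\omega_{\mathfrak g}=\omega$ forces $\sigma(\mathfrak l)$ to be \emph{isotropic}, not merely $\omega_{\mathfrak g}$-orthogonal to $s(\mathfrak a)$. Your single correction $\sigma\mapsto\sigma+s\circ\tau$ kills the $\mathfrak a$--$\mathfrak l$ cross terms but leaves $\omega_{\mathfrak g}(\sigma(L_1),\sigma(L_2))$ untouched, and a generic complement of $\mathfrak j^\bot$ is not isotropic; the failure is already visible when $\mathfrak a=0$ and $\mathfrak j$ is Lagrangian, where there is no $\tau$ to choose and yet most complements of $\mathfrak j=\mathfrak j^\bot$ are not Lagrangian. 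None of the three ingredients you list for the ``direct check'' (isotropy of $\mathfrak j$, the defining identity for $\iota$, and $i^*\overline\omega=\omega_{\mathfrak a}$) addresses this block of $\Phi^*\omega_{\mathfrak g}$. The repair is easy and does not disturb anything else: either take $V_{\mathfrak l}$ to be an isotropic complement of $\mathfrak j^\bot$ from the outset, as the paper does, or perform a second correction $\sigma\mapsto\sigma+\iota\circ b$ with $b:\mathfrak l\rightarrow\mathfrak l^*$ chosen so that $b(L_1)(L_2)-b(L_2)(L_1)=-\omega_{\mathfrak g}(\sigma(L_1),\sigma(L_2))$; since $\iota$ takes values in the isotropic ideal $\mathfrak j$ and $s(\mathfrak a)\subset\mathfrak j^\bot$, this changes neither the splitting property of $\sigma$ nor the pairings already normalized.
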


\begin{proof}
Let $V_{\mathfrak l}$ be an isotropic complement of $\mathfrak j^\bot$ and $V_{\mathfrak a}$ an orthogonal one of $\mathfrak j\oplus V_{\mathfrak l}$.
Then $\mathfrak j^\bot=\mathfrak j\oplus V_\mathfrak a$.
We choose an isomorphism $s:\mathfrak l\rightarrow V_{\mathfrak l}$ of vector spaces with $\tilde p\circ s=\id$,
where $\tilde p:\mathfrak g\rightarrow \mathfrak l$ denotes the composition of the natural projection $\pi:\mathfrak g\rightarrow\mathfrak g/\mathfrak j$ with $p$.
Moreover, there is a linear map $t:\mathfrak a\rightarrow V_{\mathfrak a}$, defined by \[i(A)=\pi(t(A))=t(A)+\mathfrak j\in\mathfrak j^\bot/\mathfrak j.\]

Since $\tilde p(X)=0$ for all $X\in\mathfrak j^\bot$, we define
the dual map $p^*:\mathfrak l^*\rightarrow \mathfrak g$ for $\tilde p:\mathfrak g\rightarrow \mathfrak l$ by
\[\omega_\mathfrak{g}(p^* Z,s(L))=Z((\tilde p\circ s)(L))=Z(L)\]
for all $Z\in\mathfrak l^*$, $L\in\mathfrak l$.
This map is an isomorphism from $\mathfrak{l}^*$ to $\mathfrak j$.

We define $\xi$, $\gamma$ and $\epsilon$ by
\begin{align}
i(\xi(L)A)&:=[s(L),t(A)]_{\mathfrak g}+\mathfrak j,\label{def:xi}\\
p^{*}((\gamma(L))A)&:=[s(L),t(A)]_{\mathfrak g}-t(\xi(L)A),\label{def:gamma}\\
\epsilon(L_1,L_2)&:=\omega_\mathfrak g([s(L_1),s(L_2)]_{\mathfrak g},s(\cdot)).\label{def:epsilon}
\end{align}
It is easy to see that this maps are well-defined.

The next step is to give some useful facts about $\beta$ and $\alpha$: 
Therefor, we consider 
\begin{align*}
p^{*}&\big(\beta(A_1,A_2)\big)=-p^{*}\big(\omega_{\mathfrak a}(\xi(\cdot)A_1,A_2)\big)-p^{*}\big(\omega_{\mathfrak a}(A_1,\xi(\cdot)A_2)\big)\\
=&-p^{*}\big(\omega_{\mathfrak g}(t(\xi(\cdot)A_1),t(A_2))\big)-p^{*}\big(\omega_{\mathfrak g}(t(A_1),t(\xi(\cdot)A_2))\big)\\
=&p^{*}\big(\omega_{\mathfrak g}( p^{*}((\gamma(\cdot))A_1)-[s(\cdot),t(A_1)]_{\mathfrak g},t(A_2))\big)\\
&+p^{*}\big(\omega_{\mathfrak g}(t(A_1),p^{*}((\gamma(\cdot))A_2)-[s(\cdot),t(A_2)]_{\mathfrak g})\big)\\
=&-p^{*}\big(\omega_{\mathfrak g}([s(\cdot),t(A_1)]_{\mathfrak g},t(A_2))\big)+p^{*}\big(\omega_{\mathfrak g}([s(\cdot),t(A_2)]_{\mathfrak g},t(A_1))\big)\\
=&p^{*}\big(\omega_{\mathfrak g}([t(A_1),t(A_2)]_{\mathfrak g},s(\cdot))\big)
\end{align*}
fr all $A_1,A_2\in\mathfrak a$.
Here we used 
\[\omega_\mathfrak a(A_1,A_2)=\omega_{\mathfrak j^\bot/\mathfrak j}(i(A_1),i(A_2))=\omega_\mathfrak g(t(A_1),t(A_2))\]
for the second equality.
Using 
\begin{align*}
\pi\big([t(A_1),t(A_2)]_\mathfrak g-t([A_1,A_2]_\mathfrak a)\big)&=[\pi t(A_1),\pi t(A_2)]_{\mathfrak g/\mathfrak j}-[i(A_1),i(A_2)]_{\mathfrak g/\mathfrak j}\\
&=[i(A_1),i(A_2)]_{\mathfrak g/\mathfrak j}-[i(A_1),i(A_2)]_{\mathfrak g/\mathfrak j}=0
\end{align*}
we get $[t(A_1),t(A_2)]_\mathfrak g-t([A_1,A_2]_\mathfrak a)\in\mathfrak j$. Moreover, $t([A_1,A_2]_{\mathfrak a})\in V_{\mathfrak a}$ and thus orthogonal to $V_{\mathfrak l}$.
We obtain 
\begin{align}\label{eq:pbeta}
p^*(\beta(A_1,A_2))=[t(A_1),t(A_2)]_{\mathfrak g}-t([A_1,A_2]_{\mathfrak a})
\end{align}
for all $A_1,A_2\in\mathfrak a$.
Now, we consider
\begin{align*}
\omega_{\mathfrak g}(t(\alpha(L_1,L_2)),t(A))=&\,\omega_{\mathfrak a}(\alpha(L_1,L_2),A)\\
=&\,\gamma(L_1,A,L_2)-\gamma(L_2,A,L_1)\\
=&\,\omega_{\mathfrak g}([s(L_1),t(A)]_{\mathfrak g}-t(\xi(L_1)A),s(L_2))\\
&\,-\omega_{\mathfrak g}([s(L_2),t(A)]_{\mathfrak g}-t(\xi(L_2)A),s(L_1))\\
=&\,\omega_{\mathfrak g}([s(L_1),t(A)]_{\mathfrak g},s(L_2))-\omega_{\mathfrak g}([s(L_2),t(A)]_{\mathfrak g},s(L_1))\\
=&\,\omega_{\mathfrak g}([s(L_1),s(L_2)]_{\mathfrak g},t(A))
\end{align*}
for all $A\in \mathfrak a$ and $L_1,L_2\in\mathfrak l$.
Since $p^{*}(\epsilon(L_1,L_2))$ lies in $\mathfrak j$, it is orthogonal to $V_{\mathfrak a}$. Moreover, $[s(L_1),s(L_2)]_{\mathfrak g}-p^{*}(\epsilon(L_1,L_2))\in V_{\mathfrak a}$, since on the one hand
\[\omega_\mathfrak g([s(L_1),s(L_2)]_\mathfrak g-p^*(\epsilon(L_1,L_2)),s(L))=\omega_\mathfrak g([s(L_1),s(L_2)]_\mathfrak g,s(L))-\epsilon(L_1,L_2)(L)=0\]
for all $L_1,L_2,L\in\mathfrak l$ and thus 
$[s(L_1),s(L_2)]_{\mathfrak g}-p^{*}(\epsilon(L_1,L_2))$ orthogonal to $V_\mathfrak l$.
On the other hand $[s(L_1),s(L_2)]_{\mathfrak g}-p^{*}(\epsilon(L_1,L_2))$ is also orthogonal to $\mathfrak j$, because $\mathfrak j$ is isotropic and $\omega_g([\mathfrak g,\mathfrak g],\mathfrak j)=0$.
We obtain 
\begin{align}\label{eq:talpha}
t(\alpha(L_1,L_2))=[s(L_1),s(L_2)]_{\mathfrak g}-p^{*}(\epsilon(L_1,L_2))
\end{align}
for all $L_1,L_2\in\mathfrak l$.

\begin{lemma}
The linear map \[\Phi:(\mathfrak l^*\oplus\mathfrak a\oplus\mathfrak l,[\cdot,\cdot]_{\gamma,\epsilon,\xi},\omega)\rightarrow(\mathfrak g,[\cdot,\cdot]_{\mathfrak g},\omega_\mathfrak g), \Phi:Z+A+L\mapsto p^{*}(Z)+t(A)+s(L)\] is an isomorphism of symplectic Lie algebras.
\end{lemma}

\begin{proof}[Proof of the lemma]
Clearly $\Phi$ is bijective.
Since $\mathfrak l^*$ and $\mathfrak j$ lie in the center, we have \begin{align*}
\Phi([\mathfrak l^*,\mathfrak l^*\oplus\mathfrak a\oplus\mathfrak l])=0=[\Phi(\mathfrak l^*),\Phi(\mathfrak l^*\oplus\mathfrak a\oplus\mathfrak l)]_{\mathfrak g}.
\end{align*}
Using the formulas (\ref{eq:pbeta}) and (\ref{eq:talpha}) for $\beta$ and $\alpha$ we obtain that $\Phi$ respects the commutator.

At this moment we have seen that $(\mathfrak g,[\cdot,\cdot]_{\mathfrak g},\omega_{\mathfrak g})$ and $(\mathfrak l^*\oplus\mathfrak a\oplus\mathfrak l,[\cdot,\cdot],\Phi^*\omega_{\mathfrak g})$ are isomorphic.
Moreover, it is easy to see that $\Phi^*\omega_{\mathfrak g}=\omega$. Thus $\Phi$ is an isomorphism of symplectic Lie algebras.
\end{proof}

Since $(\mathfrak l^*\oplus\mathfrak a\oplus\mathfrak l,[\cdot,\cdot]_{\gamma,\epsilon,\xi},\omega)$ is a symplectic Lie algebra, the triple $(\gamma,\epsilon,\xi)$ lies in $Z^2(\mathfrak l,\mathfrak a,\omega_\mathfrak a).$
In addition, we get that $\Phi:\mathfrak d_{\gamma,\epsilon,\xi}(\mathfrak l,\mathfrak a)\rightarrow\mathfrak g$ is an equivalence of the quadratic extensions $\mathfrak g$ and $\mathfrak d_{\gamma,\epsilon,\xi}(\mathfrak l,\mathfrak a)$, because on the one hand we have
$i(A)=\pi(t(A))=\overline\Phi(A)$ and on the other hand $p(\Phi(L))=p(s(L))=L$ for all $A\in\mathfrak a$ and $L\in\mathfrak l$.

For $\mathfrak{j}=\mathfrak{z}\cap\mathfrak{z}^\bot$ the isomorphism $\Phi$ is an equivalence from $\mathfrak{d}_{\gamma,\epsilon,\xi}(\mathfrak l,\mathfrak a)$ to $(\mathfrak{g},\mathfrak{j},i,p)$ with $\Phi(\mathfrak{l}^*)=\mathfrak{j}$.
Thus $(\gamma,\epsilon,\xi)$ is balanced.
\end{proof}

\subsection{Equivalence classes}\label{SLA:Äquivkl}
In this section we define the second quadratic cohomology set of abelian Lie algebras with values in symplectic ones. We show that the set of equivalence classes of quadratic extensions of an abelian Lie algebra $\mathfrak l$ by a symplectic Lie algebra $(\mathfrak a,\omega_\mathfrak a)$ is in one-to-one correspondence with the quadratic cohomology set $H^2(\mathfrak l,\mathfrak a,\omega_{\mathfrak a})$.

\begin{definition}
We call $\bar{\sigma}:\mathfrak l\rightarrow \mathfrak l^*$ selfadjoint, if $\overline\sigma=\overline\sigma^*$, i. e. 
$(\bar{\sigma} (L_1))(L_2)=(\bar{\sigma} (L_2))(L_1)$ for all $L_1,L_2\in\mathfrak l$.
Recall that the dual map $\tau^*$ for $\tau:\mathfrak l\rightarrow\mathfrak a$ is given by
\[\tau^*:\mathfrak a\rightarrow\mathfrak l^*,\quad \tau^*(A)L=\omega_{\mathfrak a}(\tau L,A).\]
\end{definition}

\begin{definition}
For a $\tau\in C^1(\mathfrak l,\mathfrak a)$ and a triple
$(\gamma,\epsilon,\xi)$, consisting of linear maps $\gamma:\mathfrak l\rightarrow\Hom(\mathfrak a,\mathfrak l^*)$, $\epsilon\in C^2(\mathfrak l,\mathfrak l^*)$ and $\xi:\mathfrak l\rightarrow\Hom(\mathfrak a)$, we define
\begin{align}\label{eq:gex1}
(\gamma,\epsilon,\xi)\tau=(&\gamma+\tilde\tau^*\circ\xi-\tilde\tau^*\circ \ad_{\mathfrak a}\circ\tau-\beta_0\circ\tau,\\
&\epsilon+\tau^*\alpha-\ev(\gamma\wedge\tau)-\tau^*\circ \rd_\xi\tau+\frac{1}{2}\beta(\tau\wedge\tau),\,\xi-\ad_{\mathfrak a}\circ\tau).
\end{align}
Here the linear map $\tilde\tau^*:\Hom(\mathfrak a)\rightarrow \Hom(\mathfrak a,\mathfrak l^*)$ is given by$(\tilde\tau^* \varphi)(A)=\tau^*(\varphi(A))$ for all $\varphi\in \Hom(\mathfrak a)$ and $A\in\mathfrak a$.
\end{definition}

\begin{lemma} \label{lem:Eq:1}
The quadratic extensions $\mathfrak d_{\gamma,\epsilon,\xi}(\mathfrak l,\mathfrak a)$ and $\mathfrak d_{\gamma',\epsilon'\xi'}(\mathfrak l,\mathfrak a)$ are equivalent, if and only if there is an $\tau\in C^1(\mathfrak{l},\mathfrak{a})$ such that $(\gamma,\epsilon,\xi)\tau=(\gamma',\epsilon',\xi')$.
\end{lemma}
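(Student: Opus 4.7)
The plan is to translate the equivalence relation into concrete linear data on $\mathfrak l^*\oplus\mathfrak a\oplus\mathfrak l$ and match the Lie bracket identity to the displayed cocycle operation. An equivalence $\Phi\colon\mathfrak d_{\gamma,\epsilon,\xi}(\mathfrak l,\mathfrak a)\to\mathfrak d_{\gamma',\epsilon',\xi'}(\mathfrak l,\mathfrak a)$ must send $\mathfrak l^*$ to $\mathfrak l^*$ and induce on the quotient $\mathfrak a\oplus\mathfrak l$ a map compatible with $i$ and $p$, hence of the form $(A,L)\mapsto(A+\tau(L),L)$ for some linear $\tau\colon\mathfrak l\to\mathfrak a$. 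Combining this with $\Phi^*\omega=\omega$ and preservation of $\mathfrak l^*$ forces
\[\Phi(Z+A+L)=\bigl(Z+\tau^*(A)+\sigma(L)\bigr)+\bigl(A+\tau(L)\bigr)+L\]
for some $\sigma\colon\mathfrak l\to\mathfrak l^*$ whose antisymmetric part satisfies $\sigma(L_1)(L_2)-\sigma(L_2)(L_1)=-\omega_\mathfrak a(\tau(L_1),\tau(L_2))$, its selfadjoint part being free. Since $\mathfrak l^*$ is central in both algebras, $\sigma$ drops out of every bracket identity; only $\tau$ will enter the cocycle transformation.

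\textbf{Translating the bracket identity.} The next step is to check $\Phi[X_1,X_2]_1=[\Phi X_1,\Phi X_2]_2$ on the three nontrivial pair types. For $(L,A)\in\mathfrak l\times\mathfrak a$, the $\mathfrak a$-component forces $\xi'=\xi-\ad_\mathfrak a\circ\tau$ and the $\mathfrak l^*$-component yields the formula for $\gamma'$; here the identity $\beta'(\tau(L),A)=\beta_0(\tau(L))(A)+\tau^*\bigl([\tau(L),A]_\mathfrak a\bigr)$ follows from the already-derived form of $\xi'$ together with the closedness $\rd\omega_\mathfrak a=0$. For $(A_1,A_2)\in\mathfrak a\times\mathfrak a$ both components are then automatic. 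For $(L_1,L_2)\in\mathfrak l\times\mathfrak l$ the $\mathfrak a$-component holds by the very definition of $\alpha$ from $\gamma$, while the $\mathfrak l^*$-component produces the formula for $\epsilon'$: the four correction terms $\tau^*\alpha$, $-\ev(\gamma\wedge\tau)$, $-\tau^*\rd_\xi\tau$, and $\tfrac12\beta(\tau\wedge\tau)$ arise respectively from $\Phi$ applied to $\alpha(L_1,L_2)$, from the $\gamma'$-contributions to $[L_i,\tau(L_j)]_2$, from the $\xi'$- and $[\cdot,\cdot]_\mathfrak a$-contributions to $[\tau(L_1),\tau(L_2)]_2$, and from $\beta'(\tau(L_1),\tau(L_2))$.

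\textbf{Converse and main obstacle.} Conversely, given $\tau$ with $(\gamma,\epsilon,\xi)\tau=(\gamma',\epsilon',\xi')$, I would define $\Phi$ by the displayed formula with the canonical choice $\sigma(L_1)(L_2):=-\tfrac12\omega_\mathfrak a(\tau(L_1),\tau(L_2))$. Then $\Phi$ is triangular with identity diagonal blocks, hence bijective; it preserves $\omega$ by the antisymmetry condition and preserves $\mathfrak l^*$ by construction; the same computation read in reverse shows it respects brackets, and its compatibility with $i,p$ is immediate from the shape $\bar\Phi(A,L)=(A+\tau(L),L)$. The main obstacle is the bookkeeping in the $(L_1,L_2)$ case: unpacking $\beta'(\tau(L_1),\tau(L_2))$ into $\tfrac12\beta(\tau\wedge\tau)$ plus cross-terms which combine with the $[\tau(L_1),\tau(L_2)]_\mathfrak a$-contributions to produce $-\tau^*\rd_\xi\tau$ is the only genuinely delicate step, and it rests squarely on $\rd\omega_\mathfrak a=0$.
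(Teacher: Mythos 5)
Your proposal is correct and follows essentially the same route as the paper: you characterize an equivalence as a block-triangular map whose off-diagonal data are forced by $\Phi^*\omega=\omega$ to be $\tau^*$ and $\bar\sigma+\tfrac12\tau^*\tau$ with $\bar\sigma$ selfadjoint, and then equate the $\mathfrak l^*$- and $\mathfrak a$-components of the bracket identity to recover exactly the displayed action of $\tau$, with the converse given by the explicit triangular map. The only presentational difference is that you justify the triangular shape via compatibility with $i,p$ and make explicit where $\rd\omega_{\mathfrak a}=0$ enters (in identifying $\beta'$ in terms of $\beta$ and $\tau^*\circ[\cdot,\cdot]_{\mathfrak a}$), points the paper treats more tersely.
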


\begin{proof}
The quadratic extensions $\mathfrak d_{\gamma,\epsilon,\xi}(\mathfrak l,\mathfrak a)$ and $\mathfrak d_{\gamma',\epsilon'\xi'}(\mathfrak l,\mathfrak a)$ are equivalent, if and only if there is an isomorphism $\varphi$ of Lie algebras, which has the form (with respect to $\mathfrak l^*\oplus\mathfrak a\oplus\mathfrak l$)
\[\varphi=\begin{pmatrix}\psi&c&b\\0&\id&\tau\\0&0&\id\end{pmatrix}\] for some linear $\tau:\mathfrak l\rightarrow \mathfrak a$, $b:\mathfrak l\rightarrow \mathfrak l^*$ and $c:\mathfrak a\rightarrow \mathfrak l^*$, and satisfies
$\varphi^*\omega=\omega$.

Here $\varphi^*\omega=\omega$ is equivalent to $\psi=id$, $c=\tau^*$ and $b=\bar\sigma+\frac{1}{2}\tau^*\tau$ for some selfadjoint $\bar\sigma$ because of
\begin{align*}
Z(L)&=\omega(\varphi(Z),\varphi(L))=(\psi(Z))(L),\\
0=\omega(\varphi(A),\varphi(L))&=\omega(A+cA,L+\tau L+bL)=-(\tau^*(A))(L)+(c(A))(L)
\end{align*}
and
\begin{align*}
0=\omega(\varphi(L_1),\varphi(L_2))=(b(L_1))(L_2)-(b(L_2))(L_1)-(\tau^*\tau(L_1))L_2.
\end{align*}

Since an equivalence $\varphi$ from $\mathfrak d_{\gamma,\epsilon,\xi}(\mathfrak l,\mathfrak a)$ to $\mathfrak d_{\gamma',\epsilon',\xi'}(\mathfrak l,\mathfrak a)$ respects the commutator, we have
\begin{align*}
[A_1,A_2]_{\mathfrak a}+\beta'(A_1,A_2)&=[\varphi(A_1),\varphi(A_2)]_{\gamma',\epsilon',\xi'}=\varphi([A_1,A_2]_{\gamma,\epsilon,\xi})\\
&=[A_1,A_2]_{\mathfrak a}+\tau^*([A_1,A_2]_{\mathfrak a})+\beta(A_1,A_2)
\end{align*}
for every $A_1,A_2\in\mathfrak a$.
In addition, 
\begin{align*}
\xi'(L)A&+\gamma'(L)A+[\tau(L),A]_{\mathfrak a}+\beta'(\tau(L),A)=[\varphi(L),\varphi(A)]_{\gamma',\epsilon',\xi'}\\
&=\varphi([L,A]_{\gamma,\epsilon,\xi})=\xi(L)A+\tau^*(\xi(L)A)+\gamma(L)A
\end{align*}
and
\begin{align*}
\alpha'(L_1,L_2)&+\epsilon'(L_1,L_2)+(\xi'(L_1))(\tau L_2)+(\gamma'(L_1))(\tau L_2)-(\xi'(L_2))(\tau L_1)\\
-(\gamma'(L_2)&(\tau L_1))+[\tau L_1,\tau L_2]_{\mathfrak a}+\beta'(\tau L_1,\tau L_2)\displaybreak\\
&=\,[\varphi(L_1),\varphi(L_2)]_{\gamma',\epsilon',\xi'}=\varphi([L_1,L_2]_{\gamma,\epsilon,\xi})\\
&=\,\alpha(L_1,L_2)+\tau^*(\alpha(L_1,L_2))+\epsilon(L_1,L_2).
\end{align*}
Finally equating the coefficients yields
\begin{align*}
\xi'(L)A&=\xi(L)A-[\tau(L),A]_{\mathfrak a},\\
\gamma'(L)A&=\gamma(L)A+\tau^*(\xi(L)A)-\beta'(\tau(L),A)\\
           &=\gamma(L)A+\tau^*(\xi(L)A)-\tau^*(((\ad_{\mathfrak a}\circ\tau)(L))(A))-((\beta_0\circ \tau)L)(A)
\end{align*}
and 
\begin{align*}
\epsilon'(L_1,L_2)=&-\gamma'(L_1)(\tau L_2)+\gamma'(L_2)(\tau L_1)-\beta'(\tau L_1,\tau L_2)+\tau^*(\alpha(L_1,L_2))+\epsilon(L_1,L_2)\\
=&-\gamma(L_1)(\tau L_2)-\omega_{\mathfrak a}(\tau(\cdot),\xi(L_1)\tau L_2)+\tau^*([\tau L_1,\tau L_2]_{\mathfrak a})+\beta(\tau L_1,\tau L_2)\\
&+\gamma(L_2)(\tau L_1)+\omega_{\mathfrak a}(\tau(\cdot),\xi(L_2)\tau L_1)+\tau^*(\alpha(L_1,L_2))+\epsilon(L_1,L_2).
\end{align*}
Hence $(\gamma,\epsilon,\xi)\tau=(\gamma',\epsilon',\xi')$.

Conversely, it is straight forward that  \[\varphi_{\tau,\bar\sigma}:=\begin{pmatrix}id&\tau^*&\bar\sigma+\frac{1}{2}\tau^*\tau\\0&id&\tau\\0&0&id\end{pmatrix}\] respects the commutator for $(\gamma,\epsilon,\xi)\tau=(\gamma',\epsilon',\xi')$ and for every selfadjoint $\bar\sigma:\mathfrak{l}\rightarrow\mathfrak{l}^*$.
\end{proof}

\begin{remark}
The map $\varphi_{\tau,\bar\sigma}$ is an equivalence from $\mathfrak{d}_{\gamma,\epsilon,\xi}$ to $\mathfrak{d}_{\gamma',\epsilon',\xi'}$, if and only if $(\gamma,\epsilon,\xi)\tau=(\gamma',\epsilon',\xi')$.
Since, moreover, $\varphi_{\tau_2,0}\circ\varphi_{\tau_1,0}=\varphi_{\tau_1+\tau_2,\bar\sigma}$ holds for the selfadjoint $\bar\sigma=\frac{1}{2}\tau_2^*\tau_1-\frac{1}{2}\tau_1^*\tau_2$, we have
$((\gamma,\epsilon,\xi)\tau_1)\tau_2=(\gamma,\epsilon,\xi)(\tau_1+\tau_2)$.
This shows that equation (\ref{eq:gex1}) defines a group action of $C^1(\mathfrak{l},\mathfrak{a})$ on the set of all triples, consisting of linear maps $\gamma:\mathfrak l\rightarrow\Hom(\mathfrak a,\mathfrak l^*)$, $\epsilon\in C^2(\mathfrak l,\mathfrak l^*)$ and $\xi:\mathfrak l\rightarrow\Hom(\mathfrak a)$.
Furthermore, using Theorem \ref{Thm:Z2} and Theorem \ref{Thm:xa} this group action of $C^1(\mathfrak{l},\mathfrak{a})$ leaves the sets $Z^2(\mathfrak{l},\mathfrak{a},\omega_\mathfrak{a})$ and $Z^2(\mathfrak{l},\mathfrak{a},\omega_\mathfrak{a})_\sharp$
invariant.
\end{remark}

We define 
\[H^2(\mathfrak l,\mathfrak a,\omega_{\mathfrak a}):=Z^2(\mathfrak l,\mathfrak a,\omega_{\mathfrak a})/C^1(\mathfrak l,\mathfrak a)\quad\text{and}\quad H^2(\mathfrak l,\mathfrak a,\omega_{\mathfrak a})_\sharp:=Z^2(\mathfrak l,\mathfrak a,\omega_{\mathfrak a})_\sharp/C^1(\mathfrak l,\mathfrak a)\]
We call $H^2(\mathfrak l,\mathfrak a,\omega_{\mathfrak a})$
the quadratic cohomology of the abelian Lie algebra $\mathfrak l$ with values in the symplectic Lie algebra $(\mathfrak a,\omega_\mathfrak a)$.
For a given cocycle $(\gamma,\epsilon,\xi)$ let $[\gamma,\epsilon,\xi]$ denote itscohomology class.

\begin{corollary}
The cohomology class of the triple $(\gamma,\epsilon,\xi)$ defined by equations (\ref{def:xi})-(\ref{def:epsilon}) does not depend on the choice of the section $s$.
\end{corollary}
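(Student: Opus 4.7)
The plan is to deduce the corollary directly from Theorem~\ref{Th:2} and Lemma~\ref{lem:Eq:1}, so that essentially no new computation is required.

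First I would fix a quadratic extension $(\mathfrak g,\mathfrak j,i,p)$ of $\mathfrak l$ by $(\mathfrak a,\omega_{\mathfrak a})$ and let $s,s'$ be two sections of $\tilde p\colon\mathfrak g\to\mathfrak l$ with isotropic images (together with the corresponding orthogonal complements $V_{\mathfrak a},V_{\mathfrak a}'$ and the induced linear maps $t,t'$). By the construction in the proof of Theorem~\ref{Th:2}, applied once to $(s,t)$ and once to $(s',t')$, I obtain two triples $(\gamma,\epsilon,\xi),(\gamma',\epsilon',\xi')\in Z^2(\mathfrak l,\mathfrak a,\omega_{\mathfrak a})$, and two isomorphisms of symplectic Lie algebras
\[
\Phi\colon\mathfrak d_{\gamma,\epsilon,\xi}(\mathfrak l,\mathfrak a)\longrightarrow\mathfrak g,\qquad
\Phi'\colon\mathfrak d_{\gamma',\epsilon',\xi'}(\mathfrak l,\mathfrak a)\longrightarrow\mathfrak g,
\]
each of which is in fact an equivalence of quadratic extensions (both send $\mathfrak l^*$ into $\mathfrak j$ and are compatible with $i$ and $p$).

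Next I would compose these to form $\Psi:=(\Phi')^{-1}\circ\Phi\colon\mathfrak d_{\gamma,\epsilon,\xi}(\mathfrak l,\mathfrak a)\to\mathfrak d_{\gamma',\epsilon',\xi'}(\mathfrak l,\mathfrak a)$. Since equivalence of quadratic extensions is transitive, $\Psi$ is an equivalence of quadratic extensions of $\mathfrak l$ by $(\mathfrak a,\omega_{\mathfrak a})$. Applying Lemma~\ref{lem:Eq:1} then yields a $\tau\in C^1(\mathfrak l,\mathfrak a)$ with $(\gamma,\epsilon,\xi)\tau=(\gamma',\epsilon',\xi')$, which by the definition of $H^2(\mathfrak l,\mathfrak a,\omega_{\mathfrak a})$ as the quotient of $Z^2(\mathfrak l,\mathfrak a,\omega_{\mathfrak a})$ by the action of $C^1(\mathfrak l,\mathfrak a)$ means exactly that $[\gamma,\epsilon,\xi]=[\gamma',\epsilon',\xi']$.

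There is no real obstacle here; the only thing to verify carefully is that $\Phi$ as constructed in the proof of Theorem~\ref{Th:2} really is an equivalence of quadratic extensions and not merely an isomorphism of symplectic Lie algebras, i.e.\ that $\Phi(\mathfrak l^*)=\mathfrak j$ and that $\overline\Phi\circ i_{\mathrm{std}}=i$, $p\circ\overline\Phi=p_{\mathrm{std}}$. Both statements are spelled out at the end of the proof of Theorem~\ref{Th:2} (using $\Phi(Z)=p^*(Z)\in\mathfrak j$, $\overline\Phi(A)=\pi(t(A))=i(A)$ and $p(\Phi(L))=\tilde p(s(L))=L$), so the argument above really does reduce the corollary to a one-line consequence of the two preceding results, and it simultaneously shows independence from the auxiliary choices of $V_{\mathfrak l}$ and $V_{\mathfrak a}$ as well.
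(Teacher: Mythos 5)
Your argument is exactly the paper's proof: both standard models are equivalent to $(\mathfrak g,\mathfrak j,i,p)$ by Theorem \ref{Th:2}, hence equivalent to each other by transitivity, and Lemma \ref{lem:Eq:1} then identifies the cohomology classes. You merely spell out the verification that $\Phi$ is an equivalence of quadratic extensions, which the paper takes for granted from the end of the proof of Theorem \ref{Th:2}.
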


\begin{proof}
For two sections $s_1,s_2:\mathfrak l\rightarrow V_\mathfrak l$ let 
$(\gamma_1,\epsilon_1,\xi_1),(\gamma_2,\epsilon_2,\xi_2)\in Z^2(\mathfrak l,\mathfrak a,\omega_\mathfrak a)$ denote the corresponding cocycles, which are given by equations (\ref{def:xi})-(\ref{def:epsilon}).
Then $\mathfrak d_{\gamma_1,\epsilon_1,\xi_1}(\mathfrak l,\mathfrak a)$ and $\mathfrak d_{\gamma_2,\epsilon_2,\xi_2}(\mathfrak l,\mathfrak a)$ are equivalent, since both quadratic extensions are equivalent to $(\mathfrak g,\mathfrak j,i,p)$.
Using the previous lemma we get $[\gamma_1,\epsilon_1,\xi_1]=[\gamma_2,\epsilon_2,\xi_2]$.
\end{proof}

As the final result of this section we have the following theorem.
\begin{theorem}
The equivalence classes of quadratic extensions of an abelian Lie algebra $\mathfrak l$ by an symplectic Lie algebra $(\mathfrak a,\omega_\mathfrak a)$ is in bijection to the elements of 
$H^2(\mathfrak l,\mathfrak a,\omega_{\mathfrak a})$.
Moreover, the equivalence classes of balanced quadratic extensions is in bijection to  
$H^2(\mathfrak l,\mathfrak a,\omega_{\mathfrak a})_\sharp$.
\end{theorem}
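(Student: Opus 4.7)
The plan is to assemble the three previous results—Theorem \ref{Th:2}, Lemma \ref{lem:Eq:1}, and the preceding corollary—into a bijection. Concretely, I would define a map $\Psi$ from the set of equivalence classes of quadratic extensions of $\mathfrak{l}$ by $(\mathfrak a,\omega_\mathfrak a)$ to $H^2(\mathfrak l,\mathfrak a,\omega_\mathfrak a)$ as follows: given a quadratic extension $(\mathfrak g,\mathfrak j,i,p)$, Theorem \ref{Th:2} produces a cocycle $(\gamma,\epsilon,\xi)\in Z^2(\mathfrak l,\mathfrak a,\omega_\mathfrak a)$ together with an equivalence $(\mathfrak g,\mathfrak j,i,p)\sim\mathfrak d_{\gamma,\epsilon,\xi}(\mathfrak l,\mathfrak a)$, and I send the equivalence class of the extension to $[\gamma,\epsilon,\xi]$.

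The well-definedness of $\Psi$ requires independence of the auxiliary data: the isotropic complement $V_\mathfrak l$, the orthogonal complement $V_\mathfrak a$, and the section $s$ used inside the proof of Theorem \ref{Th:2}, as well as independence of the chosen representative of the equivalence class. The corollary already settled independence of the section (and similarly for the complements). For independence under equivalence, suppose $(\mathfrak g_1,\mathfrak j_1,i_1,p_1)\sim(\mathfrak g_2,\mathfrak j_2,i_2,p_2)$ via some $\Phi$, and let $(\gamma_k,\epsilon_k,\xi_k)$ be the cocycles produced by Theorem \ref{Th:2}. Then both standard models $\mathfrak d_{\gamma_k,\epsilon_k,\xi_k}(\mathfrak l,\mathfrak a)$ are equivalent to one another (by composing the equivalences through $\Phi$), so Lemma \ref{lem:Eq:1} yields a $\tau\in C^1(\mathfrak l,\mathfrak a)$ with $(\gamma_1,\epsilon_1,\xi_1)\tau=(\gamma_2,\epsilon_2,\xi_2)$, hence $[\gamma_1,\epsilon_1,\xi_1]=[\gamma_2,\epsilon_2,\xi_2]$.

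Surjectivity is immediate: any $[\gamma,\epsilon,\xi]\in H^2(\mathfrak l,\mathfrak a,\omega_\mathfrak a)$ is represented by some cocycle, and Theorem \ref{Thm:Z2} together with the subsequent identification shows that $\mathfrak d_{\gamma,\epsilon,\xi}(\mathfrak l,\mathfrak a)$ is a quadratic extension whose image under $\Psi$ is $[\gamma,\epsilon,\xi]$. For injectivity, if $\Psi$ sends $(\mathfrak g_k,\mathfrak j_k,i_k,p_k)$, $k=1,2$, to the same cohomology class, choose cocycles $(\gamma_k,\epsilon_k,\xi_k)$ representing them; by assumption the two cocycles are related by an element of $C^1(\mathfrak l,\mathfrak a)$, so Lemma \ref{lem:Eq:1} yields an equivalence $\mathfrak d_{\gamma_1,\epsilon_1,\xi_1}(\mathfrak l,\mathfrak a)\sim\mathfrak d_{\gamma_2,\epsilon_2,\xi_2}(\mathfrak l,\mathfrak a)$, and composing with the equivalences from $(\mathfrak g_k,\mathfrak j_k,i_k,p_k)$ to its standard model gives the desired equivalence between the two original extensions.

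For the balanced statement, I would restrict $\Psi$ to balanced extensions. The second assertion of Theorem \ref{Th:2} guarantees that $\Psi$ maps balanced extensions into $H^2(\mathfrak l,\mathfrak a,\omega_\mathfrak a)_\sharp$, and Theorem \ref{Thm:xa} shows conversely that every balanced cohomology class is represented by a balanced standard model, yielding surjectivity onto $H^2(\mathfrak l,\mathfrak a,\omega_\mathfrak a)_\sharp$. The only remaining point is that being balanced is a property of the equivalence class of an extension: if $\Phi:\mathfrak g_1\to\mathfrak g_2$ is an equivalence with $\Phi(\mathfrak j_1)=\mathfrak j_2$, then $\Phi$ is a symplectic Lie algebra isomorphism and therefore carries $\mathfrak z(\mathfrak g_1)\cap\mathfrak z(\mathfrak g_1)^\bot$ to $\mathfrak z(\mathfrak g_2)\cap\mathfrak z(\mathfrak g_2)^\bot$, so $\mathfrak j_1=\mathfrak z(\mathfrak g_1)\cap\mathfrak z(\mathfrak g_1)^\bot$ forces $\mathfrak j_2=\mathfrak z(\mathfrak g_2)\cap\mathfrak z(\mathfrak g_2)^\bot$. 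Injectivity of the restricted map is inherited from the unrestricted one. There is no real obstacle here; the work was already done in the earlier sections, and the only point to be careful about is the bookkeeping for well-definedness under the simultaneous freedom to vary both the representative of the equivalence class and the section used to extract a cocycle.
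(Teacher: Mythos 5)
Your proposal is correct and follows exactly the route the paper intends: the paper gives no written proof, presenting the theorem as an immediate consequence of Theorem \ref{Th:2}, Lemma \ref{lem:Eq:1}, Theorem \ref{Thm:xa} and the corollary on section-independence, which is precisely the assembly you carry out. Your additional care with well-definedness and with the observation that being balanced is an invariant of the equivalence class is sound and fills in the bookkeeping the paper leaves implicit.
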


\subsection{Isomorphism classes of symplectic Lie algebras}\label{SLA:Isomkl}

The aim of this section is to describe the isomorphism classes of symplectic Lie algebras using a suitable classification scheme. Therefor, we determine the isomorphy of symplectic Lie algebras.
\\\quad\\
Therefor, we consider pairs $(\mathfrak l,\mathfrak a)$ of abelian Lie-Algebras $\mathfrak l$ and symplectic Lie algebras $(\mathfrak a,\omega_\mathfrak a)$. These pairs form a category, whose morphisms are pairs $(S,U):(\mathfrak l_1,\mathfrak a_1)\rightarrow(\mathfrak l_2,\mathfrak a_2)$ containing a homomorphism $S:\mathfrak l_1\rightarrow\mathfrak l_2$ of vector spaces and a homomorphism $U:\mathfrak a_2\rightarrow\mathfrak a_1$ of symplectic Lie algebras. We will denote this morphisms by morphisms of pairs.

Suppose that $(S,U):(\mathfrak l_1,\mathfrak a_1)\rightarrow(\mathfrak l_2,\mathfrak a_2)$ is an isomorphism of pairs.
We have pull back maps
$(S,U)^*:C^p(\mathfrak l_2,\mathfrak a_2)\rightarrow C^p(\mathfrak l_1,\mathfrak a_1)$ and $(S,U)^*:C^p(\mathfrak l_2)\rightarrow C^p(\mathfrak l_1)$ given by
\begin{align*}
(S,U)^*\alpha(L_1,\dots,L_p)=U\circ\alpha(SL_1,\dots,SL_p),\quad
(S,U)^*\gamma(L_1,\dots,L_p)=\gamma(SL_1,\dots,SL_p).
\end{align*}
Now assume that $S:\mathfrak l_1\rightarrow \mathfrak l_2$ and $U:\mathfrak a_2\rightarrow \mathfrak a_1$ are isomorphisms of Lie algebras satisfying $U^*\omega_{\mathfrak a_1}=\omega_{\mathfrak a_2}$.
We define a pull back map $(S,U)^*$ from $\Hom(\mathfrak l_2,\Hom(\mathfrak a_2,\mathfrak l_2^*))\oplus C^2(\mathfrak l_2,\mathfrak l_2^*)\oplus \Hom(\mathfrak l_2,\Hom(\mathfrak a_2))$
to 
$\Hom(\mathfrak l_1,\Hom(\mathfrak a_1,\mathfrak l_1^*))\oplus C^2(\mathfrak l_1,\mathfrak l_1^*)\oplus \Hom(\mathfrak l_1,\Hom(\mathfrak a_1))$ by
\[(S,U)^*(\gamma,\epsilon,\xi)=((S,U)^*\gamma,(S,U)^*\epsilon,(S,U)^*\xi)\]
where
\begin{itemize}
\item $((S,U)^*\gamma(L))(A)=S^*(\gamma(SL)(U^{-1}A))$,
\item $(S,U)^*\epsilon(L_1,L_2)=S^*(\epsilon(SL_1,SL_2))$,
\item $(S,U)^*\xi(L)A=(U\xi(SL))(U^{-1}A)$.
\end{itemize}

Let $(S,U)$ be an isomorphism of pairs and $(\gamma,\epsilon,\xi)\in Z^2(\mathfrak l,\mathfrak a,\omega_\mathfrak a)$.
Then
\begin{align*}
&\omega_\mathfrak a( ((S,U)^*\gamma(L_1))^*(L_2)-((S,U)^*\gamma(L_2))^*(L_1) ,A)\\
&=(((S,U)^*\gamma(L_1))(A))(L_2)-(((S,U)^*\gamma(L_2))(A))(L_1)\\
&=((\gamma(SL_1))(U^{-1}A))(SL_2)-((\gamma(SL_2))(U^{-1}A))(SL_1)\\
&=\omega_\mathfrak a( (\gamma(SL_1))^*(SL_2)-(\gamma(SL_2))^*(SL_1) ,U^{-1}A)\\
&=\omega_\mathfrak a( U\circ\alpha(SL_1,SL_2),A)
\end{align*}
and
\begin{align*}
-&\omega_\mathfrak a(U\xi(S\cdot)U^{-1}A_1,A_2)-\omega_\mathfrak a(A_1,U\xi(S\cdot)U^{-1}A_2)\\
&=-\omega_\mathfrak a(\xi(S\cdot)U^{-1}A_1,U^{-1}A_2)-\omega_\mathfrak a(U{-1}A_1,\xi(S\cdot)U^{-1}A_2)\\
&=S^*\beta(U^{-1}A_1,U^{-1}A_2)
\end{align*}
for all $L_1,L_2\in\mathfrak l_1$ and $A,A_1,A_2\in\mathfrak a_1$.
This shows that the mappings $\alpha'$ and $\beta'$ corresponding to the triple $(\gamma',\epsilon',\xi')=((S,U)^*\gamma,(S,U)^*\epsilon,(S,U)^*\xi)$ are given by
\begin{align}\label{eq:astrich}
\alpha'(L_1,L_2)=U\circ\alpha(SL_1,SL_2)
\end{align}
and 
\begin{align}\label{eq:bstrich}
\beta'(A_1,A_2)(L_1)=\beta(U^{-1}A_1,U^{-1}A_2)(SL_1).
\end{align}
So, it is easy to show that equation (\ref{eq:lemma2}) holds for $(\gamma',\epsilon',\xi')$:
\begin{align*}
\beta'(\xi'(L_1)&A_1,A_2)(L_2)+\beta'(A_1,\xi'(L_1)A_2)(L_2)\\
&=\beta(U^{-1}\xi'(L_1)A_1,U^{-1}A_2)(SL_2)+\beta(U^{-1}A_1,U^{-1}\xi'(L_1)A_2)(SL_2)\\
&=\gamma(SL_1,[U^{-1}A_1,U^{-1}A_2]_\mathfrak a,SL_2)\\
&=\gamma'(L_1,[A_1,A_2]_\mathfrak a,L_2).
\end{align*}
The other equations (\ref{eq:lemma1}) and (\ref{eq:lemma3})-(\ref{eq:lemma5}) can be verified analogous with equation (\ref{eq:astrich}) and (\ref{eq:bstrich}).
So, $(S,U)^*$ maps the space of cocycles $Z^2(\mathfrak l_2,\mathfrak a_2,\omega_{\mathfrak a_2})$ to $Z^2(\mathfrak l_1,\mathfrak a_1,\omega_{\mathfrak a_1})$.
Since the pull back map respects the group action in the following sense
\[(S,U)^*((\gamma,\epsilon,\xi)\tau)=((S,U)^*(\gamma,\epsilon,\xi))((S,U)^*\tau),\]
we also have a pull back map from $H^2(\mathfrak l_2,\mathfrak a_2,\omega_{\mathfrak a_2})$ to $H^2(\mathfrak l_1,\mathfrak a_1,\omega_{\mathfrak a_1})$ given by \[(S,U)^*[\gamma,\epsilon,\xi]=[(S,U)^*(\gamma,\epsilon,\xi)].\]

\begin{lemma}
Let $(\mathfrak g_k,\mathfrak j_k,i_k,p_k)$ be balanced quadratic extensions of $\mathfrak l_k$ by $(\mathfrak a_k,\omega_{\mathfrak a_k})$ for $k=1,2$.
The symplectic Lie algebras $\mathfrak g_1$ and $\mathfrak g_2$ are isomorphic if and only if there is an isomorphism of pairs $(S,U):(\mathfrak l_1,\mathfrak a_1)\rightarrow(\mathfrak l_2,\mathfrak a_2)$ such that the quadratic extensions $(\mathfrak g_1,\mathfrak j_1,i_1\circ U,S\circ p_1)$ and $(\mathfrak g_2,\mathfrak j_2,i_2,p_2)$ are equivalent.
\end{lemma}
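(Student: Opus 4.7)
The backward implication is immediate: any equivalence of quadratic extensions $(\mathfrak g_1,\mathfrak j_1,i_1\circ U,S\circ p_1)\sim(\mathfrak g_2,\mathfrak j_2,i_2,p_2)$ is, by Definition, an isomorphism $\Phi:\mathfrak g_1\to\mathfrak g_2$ of symplectic Lie algebras. So the whole content lies in the forward direction.

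For the forward direction, assume $\Phi:\mathfrak g_1\to\mathfrak g_2$ is an isomorphism of symplectic Lie algebras. The key observation is that the balanced hypothesis makes the ideal canonical: $\mathfrak j_k=\mathfrak z(\mathfrak g_k)\cap\mathfrak z(\mathfrak g_k)^\bot$. Since $\Phi$ preserves both the Lie bracket and the symplectic form, it maps the center to the center and the symplectic orthogonal to the symplectic orthogonal, hence $\Phi(\mathfrak j_1)=\mathfrak j_2$. Consequently $\Phi(\mathfrak j_1^\bot)=\mathfrak j_2^\bot$, and $\Phi$ descends to an isomorphism $\overline\Phi:\mathfrak g_1/\mathfrak j_1\to\mathfrak g_2/\mathfrak j_2$ that restricts to a symplectic Lie algebra isomorphism $\Phi_{\mathfrak a}:\mathfrak j_1^\bot/\mathfrak j_1\to\mathfrak j_2^\bot/\mathfrak j_2$ and induces a linear isomorphism $\Phi_{\mathfrak l}:\mathfrak g_1/\mathfrak j_1^\bot\to\mathfrak g_2/\mathfrak j_2^\bot$.

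Next I would define the pair morphism. Set
\[U:=i_1^{-1}\circ\Phi_{\mathfrak a}^{-1}\circ i_2:\mathfrak a_2\to\mathfrak a_1,\]
which by construction is an isomorphism of symplectic Lie algebras. For $S$, use that $p_k$ factors through the projection $q_k:\mathfrak g_k/\mathfrak j_k\to\mathfrak g_k/\mathfrak j_k^\bot$ as $p_k=\bar p_k\circ q_k$ with $\bar p_k$ an isomorphism; define
\[S:=\bar p_2\circ\Phi_{\mathfrak l}\circ\bar p_1^{-1}:\mathfrak l_1\to\mathfrak l_2.\]
Then $(S,U)$ is an isomorphism of pairs in the sense defined earlier.

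Finally I would check that $\Phi$ itself witnesses the equivalence $(\mathfrak g_1,\mathfrak j_1,i_1\circ U,S\circ p_1)\sim(\mathfrak g_2,\mathfrak j_2,i_2,p_2)$. We already know $\Phi$ is a symplectic Lie algebra isomorphism with $\Phi(\mathfrak j_1)=\mathfrak j_2$. The two compatibility conditions reduce to
\[\overline\Phi\circ(i_1\circ U)=\Phi_{\mathfrak a}\circ i_1\circ i_1^{-1}\circ\Phi_{\mathfrak a}^{-1}\circ i_2=i_2\]
and, using $q_2\circ\overline\Phi=\Phi_{\mathfrak l}\circ q_1$,
\[p_2\circ\overline\Phi=\bar p_2\circ q_2\circ\overline\Phi=\bar p_2\circ\Phi_{\mathfrak l}\circ q_1=S\circ\bar p_1\circ q_1=S\circ p_1,\]
which completes the argument.

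The only real subtlety is the first step: identifying $\mathfrak j_k$ as the canonically defined subspace $\mathfrak z(\mathfrak g_k)\cap\mathfrak z(\mathfrak g_k)^\bot$, without which $\Phi$ would not automatically match the chosen isotropic ideals and no such $(S,U)$ could be extracted. Everything after that is diagram chasing.
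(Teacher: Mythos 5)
Your proof is correct and follows essentially the same route as the paper: trivial backward direction, then use the balanced hypothesis to see that $\Phi$ must carry $\mathfrak j_1=\mathfrak z(\mathfrak g_1)\cap\mathfrak z(\mathfrak g_1)^\bot$ to $\mathfrak j_2$, descend to the quotients, and read off $(S,U)$ from the induced maps. You are in fact slightly more explicit than the paper about why $\Phi(\mathfrak j_1)=\mathfrak j_2$ (the paper simply asserts it), which is exactly the point where balancedness is needed.
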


\begin{proof}
One direction is trivial, since every equivalence of quadratic extensions is by definition an isomorphism of the corresponding symplectic Lie algebras.
So, assume that $F:\mathfrak g_1\rightarrow\mathfrak g_2$ is an isomorphism of symplectic Lie algebras.
Because of $F(\mathfrak j_1)=\mathfrak j_2$ and $F(\mathfrak j_1^\bot)=\mathfrak j_2^\bot$ the isomorphism $F$ induces an isomorphism $\overline F:\mathfrak g_1/\mathfrak j_1\rightarrow\mathfrak g_2/\mathfrak j_2$, which satisfies $\ker p_2=\overline F(\ker p_1)$.
Now, we define $S:\mathfrak l_1\rightarrow\mathfrak l_2$ and $U:\mathfrak a_2\rightarrow\mathfrak a_1$ by 
\[S(L)=(p_2\circ\overline F)(\tilde L),\quad i_1\circ U=\overline F^{-1}\circ i_2\]
for $p_1(\tilde L)=L$.
It is not hard to see, that $S$ is an isomorphism of vector spaces and $U$ an isomorphism of symplectic Lie algebras. Thus $F$ becomes an equivalence between the quadratic extensions $(\mathfrak g_1,\mathfrak j_1,i_1\circ U,S\circ p_1)$ and $(\mathfrak g_2,\mathfrak j_2,i_2,p_2)$ of $\mathfrak l_2$ by $(\mathfrak a_2,\omega_{\mathfrak a_2})$.
\end{proof}

\begin{theorem}\label{thm:15}
Assume $(\gamma_i,\epsilon_i,\xi_i)\in Z^2(\mathfrak l_i,\mathfrak a_i,\omega_{\mathfrak a_i})_\sharp$ for $i=1,2$.
The symplectic Lie algebras 
$\mathfrak d_{\gamma_1,\epsilon_1,\xi_1}(\mathfrak l_1,\mathfrak a_1)$ and $\mathfrak d_{\gamma_2,\epsilon_2,\xi_2}(\mathfrak l_2,\mathfrak a_2)$ are isomorphic if and only if there is an isomorphism $S:\mathfrak l_1\rightarrow\mathfrak l_2$ of vector spaces and an isomorphism $U:(\mathfrak a_2,\omega_{\mathfrak a_2})\rightarrow(\mathfrak a_1,\omega_{\mathfrak a_1})$ such that
\[(S,U)^*[\gamma_2,\epsilon_2,\xi_2]=[\gamma_1,\epsilon_1,\xi_1].\]
\end{theorem}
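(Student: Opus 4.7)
The approach reduces the isomorphism question in three steps: first, use the lemma immediately preceding the theorem to convert an isomorphism of the two symplectic Lie algebras into an equivalence of ``twisted'' quadratic extensions over the common pair $(\mathfrak l_2,\mathfrak a_2)$; second, apply Theorem~\ref{Th:2} to identify the cocycle of the twisted extension; third, apply Lemma~\ref{lem:Eq:1} to translate this into an equality of cohomology classes.

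In detail, by Theorem~\ref{Thm:xa} each $\mathfrak d_{\gamma_i,\epsilon_i,\xi_i}(\mathfrak l_i,\mathfrak a_i)$ carries its canonical balanced quadratic extension structure $(\mathfrak l_i^*,i_i,p_i)$, so the preceding lemma gives $\mathfrak d_1\cong \mathfrak d_2$ iff there exists an isomorphism of pairs $(S,U):(\mathfrak l_1,\mathfrak a_1)\to(\mathfrak l_2,\mathfrak a_2)$ such that the twisted data $(\mathfrak d_1,\mathfrak l_1^*,i_1\circ U,S\circ p_1)$ and $(\mathfrak d_2,\mathfrak l_2^*,i_2,p_2)$ are equivalent as quadratic extensions of $\mathfrak l_2$ by $(\mathfrak a_2,\omega_{\mathfrak a_2})$.

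To the twisted extension I apply Theorem~\ref{Th:2} with the sections $s:\mathfrak l_2\to\mathfrak d_1$, $L\mapsto S^{-1}L\in\mathfrak l_1$, and $t:\mathfrak a_2\to\mathfrak d_1$, $A\mapsto UA\in\mathfrak a_1$. Inside $\mathfrak d_1$, the subspace $\mathfrak l_1$ is an isotropic complement of $(\mathfrak l_1^*)^\bot=\mathfrak l_1^*\oplus\mathfrak a_1$ and $\mathfrak a_1$ is an orthogonal complement of $\mathfrak l_1^*\oplus\mathfrak l_1$, while the dual of $S\circ p_1$ is $p^*=S^*$. Expanding the bracket of $\mathfrak d_1$ on $\mathfrak l_1\oplus\mathfrak a_1$ via $(\gamma_1,\epsilon_1,\xi_1)$ and substituting into formulas~(\ref{def:xi})--(\ref{def:epsilon}) yields, term by term, exactly $(S^{-1},U^{-1})^*(\gamma_1,\epsilon_1,\xi_1)$, so the twisted extension is equivalent to $\mathfrak d_{(S^{-1},U^{-1})^*(\gamma_1,\epsilon_1,\xi_1)}(\mathfrak l_2,\mathfrak a_2)$.

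Lemma~\ref{lem:Eq:1}, applied to the two standard models over $(\mathfrak l_2,\mathfrak a_2)$, then turns the equivalence in step one into $[(S^{-1},U^{-1})^*(\gamma_1,\epsilon_1,\xi_1)]=[\gamma_2,\epsilon_2,\xi_2]$ in $H^2(\mathfrak l_2,\mathfrak a_2,\omega_{\mathfrak a_2})$. Since pullback is contravariantly functorial, one checks directly that $(S,U)^*\circ(S^{-1},U^{-1})^*=\id$, and applying $(S,U)^*$ to both sides rewrites this as $(S,U)^*[\gamma_2,\epsilon_2,\xi_2]=[\gamma_1,\epsilon_1,\xi_1]$. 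Inversion being a bijection on the group of isomorphisms of pairs, the existential quantifier over $(S,U)$ is preserved, which gives the stated characterization in both directions. The principal technical obstacle lies in step two: the term-by-term identification of the induced cocycle as $(S^{-1},U^{-1})^*(\gamma_1,\epsilon_1,\xi_1)$ requires separate but parallel bracket calculations for $\xi$, $\gamma$ and $\epsilon$, relying on $p^*=S^*$ and on $\pi\circ t=i_1\circ U$; all other steps are direct invocations of results already established in the excerpt.
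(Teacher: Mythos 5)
Your proposal is correct and follows essentially the same route as the paper: reduce via the preceding lemma to an equivalence of the twisted quadratic extensions, identify the induced cocycle of $(\mathfrak d_1,\mathfrak l_1^*,i_1\circ U,S\circ p_1)$ with the section $S^{-1}$ as $(S^{-1},U^{-1})^*(\gamma_1,\epsilon_1,\xi_1)$ using formulas (\ref{def:xi})--(\ref{def:epsilon}), and conclude with Lemma \ref{lem:Eq:1}. The only difference is that you spell out the final passage from $[(S^{-1},U^{-1})^*(\gamma_1,\epsilon_1,\xi_1)]=[\gamma_2,\epsilon_2,\xi_2]$ to the stated form by applying $(S,U)^*$, which the paper leaves implicit.
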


\begin{proof}
From the previous lemma, we already know that the symplectic Lie algebras \linebreak $\mathfrak d_{\gamma_1,\epsilon_1,\xi_1}(\mathfrak l_1,\mathfrak a_1)$ and $\mathfrak d_{\gamma_2,\epsilon_2,\xi_2}(\mathfrak l_2,\mathfrak a_2)$ are isomorphic if and only if there is an isomorphism $S:\mathfrak l_1\rightarrow\mathfrak l_2$ of vector spaces and an isomorphism $U:\mathfrak a_2\rightarrow\mathfrak a_1$ of symplectic Lie algebras such that the quadratic extensions $(\mathfrak d_{\gamma_1,\epsilon_1,\xi_1}(\mathfrak l_1,\mathfrak a_1),\mathfrak l^*_1,i_1\circ U,S\circ p_1)$ and $(\mathfrak d_{\gamma_2,\epsilon_2,\xi_2}(\mathfrak l_2,\mathfrak a_2),\mathfrak l^*_2,i_2,p_2)$ are equivalent.

We define a section $\tilde s:\mathfrak l_2\rightarrow\mathfrak d_{\gamma_1,\epsilon_1,\xi_1}(\mathfrak l_1,\mathfrak a_1)$ of $S\circ \tilde p_1$ as the composition of $S^{-1}$  and the canonical embedding $s:\mathfrak l_1\rightarrow\mathfrak d_{\gamma_1,\epsilon_1,\xi_1}(\mathfrak l_1,\mathfrak a_1)$. Here $\tilde p_1$ denotes the canonical projection from $\mathfrak d_{\gamma_1,\epsilon_1,\xi_1}(\mathfrak l_1,\mathfrak a_1)$ to $\mathfrak l_1$.
Now, the cocyle associated to this quadratic extension $(\mathfrak d_{\gamma_1,\epsilon_1,\xi_1}(\mathfrak l_1,\mathfrak a_1),\mathfrak l^*_1,i_1\circ U,S\circ p_1)$ of $\mathfrak l_2$ by $\mathfrak a_2$ and this section $\tilde s$ is given by
\[(S^{-1},U^{-1})^*(\gamma_1,\epsilon_1,\xi_1)\]
(see (\ref{def:xi}) - (\ref{def:epsilon}) ).
Using Lemma \ref{lem:Eq:1} yields the assumption.
\end{proof}

\begin{remark}
Using Theorem \ref{Thm:xa} and Theorem \ref{thm:15} we obtain that isomorphisms of pairs pull back balanced cocycles in $Z^2(\mathfrak{l}_2,\mathfrak{a}_2,\omega_{\mathfrak{a}_2})$ to balanced cocycles in $Z^2(\mathfrak{l}_1,\mathfrak{a}_1,\omega_{\mathfrak{a}_1})$.
\end{remark}

Let $\mathfrak l$ be an abelian Lie algebra and $(\mathfrak a,\omega_\mathfrak a)$ a symplectic one. Let $G(\mathfrak l,\mathfrak a,\omega_\mathfrak a)$ denote the set of all pairs of automorphisms $S$ of the vector space $\mathfrak l$ and automorphisms $U$ of the symplectic Lie algebra $(\mathfrak a,\omega_\mathfrak a)$.
If it is clear from the context which $(\mathfrak l,\mathfrak a,\omega_\mathfrak a)$ is considered, then we simply write $G$ instead of $G(\mathfrak l,\mathfrak a,\omega_\mathfrak a)$.

We conclude this section with the following classification theorem. It follows immediately from Theorem \ref{Th:1}, Theorem \ref{Th:2} and Theorem \ref{thm:15}.

\begin{theorem}\label{Th:Klassifik}
The set of isomorphism classes of symplectic Lie algebras,
\begin{itemize}
\item whose ideal $\mathfrak j=\mathfrak z\cap\mathfrak z^\bot$ is $n$-dimensional and
\item associated symplectic Lie algebra $(\nicefrac{\mathfrak j^\bot}{\mathfrak j},\overline\omega)$ is isomorphic to $(\mathfrak a,\omega_{\mathfrak a})$
\end{itemize}
is in one-to-one correspondence to the orbit space
\[H^2(\mathds R^n,\mathfrak a,\omega_{\mathfrak a})_\sharp/G.\]

Let $\operatorname{Isom}(2n)$ denote a system of representatives of the isomorphism classes of symplectic Lie algebras of dimension $2n$. 
There is a bijection between the set of all isomorphism classes of symplectic Lie algebras of dimension $2n$ with degenerate center and the set  
\[\coprod_{m=1}^n\quad\coprod_{[\mathfrak a,\omega_{\mathfrak a}]\in \operatorname{Isom}(2n-2m)} H^2(\mathds R^m,\mathfrak a,\omega_{\mathfrak a})_\sharp/G.\] 
\end{theorem}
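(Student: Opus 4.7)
The plan is to assemble Theorem~\ref{Th:1}, Theorem~\ref{Th:2}, the bijection between equivalence classes of balanced quadratic extensions and $H^2(\mathfrak{l},\mathfrak{a},\omega_\mathfrak{a})_\sharp$ proved at the end of the previous section, and Theorem~\ref{thm:15}. The theorem is essentially a corollary of these statements, once the freedom in the underlying constructions is tracked carefully.

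For the first assertion, fix $n$ and the symplectic Lie algebra $(\mathfrak{a},\omega_\mathfrak{a})$. The proposed map from isomorphism classes to $G$-orbits is constructed as follows. Given $(\mathfrak{g},\omega_\mathfrak{g})$ with $\dim(\mathfrak{z}\cap\mathfrak{z}^\bot)=n$ and $\mathfrak{j}^\bot/\mathfrak{j}\cong(\mathfrak{a},\omega_\mathfrak{a})$, Theorem~\ref{Th:1} endows $\mathfrak{g}$ canonically with the structure of a balanced quadratic extension of the abelian Lie algebra $\mathfrak{l}:=\mathfrak{g}/\mathfrak{j}^\bot$ by $(\mathfrak{j}^\bot/\mathfrak{j},\overline{\omega})$; since $\mathfrak{j}$ is isomorphic to $\mathfrak{l}^*$, one has $\dim\mathfrak{l}=n$. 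Choosing auxiliary isomorphisms $S_{0}\colon\mathfrak{l}\to\mathds{R}^{n}$ and $U_{0}\colon(\mathfrak{a},\omega_\mathfrak{a})\to(\mathfrak{j}^\bot/\mathfrak{j},\overline{\omega})$ and applying Theorem~\ref{Th:2} together with the bijection recalled above, I obtain a class in $H^{2}(\mathds{R}^{n},\mathfrak{a},\omega_\mathfrak{a})_\sharp$, and I send $[\mathfrak{g}]$ to its $G$-orbit.

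Next I would verify well-definedness on two levels. First, a different choice $(S_{0}',U_{0}')$ differs from $(S_{0},U_{0})$ by post-composition with an element of $G=G(\mathds{R}^{n},\mathfrak{a},\omega_\mathfrak{a})$; by the pull-back formulas for cocycles developed before Theorem~\ref{thm:15}, the associated cohomology class is pulled to an element of the same $G$-orbit, so the orbit is unchanged. Second, an isomorphism $F\colon\mathfrak{g}\to\mathfrak{g}'$ of symplectic Lie algebras carries $\mathfrak{z}\cap\mathfrak{z}^\bot$ and $\mathfrak{j}^\bot$ to their counterparts in $\mathfrak{g}'$ and therefore descends to isomorphisms $\mathfrak{l}\to\mathfrak{l}'$ and $(\mathfrak{j}^\bot/\mathfrak{j},\overline{\omega})\to(\mathfrak{j}'^\bot/\mathfrak{j}',\overline{\omega'})$. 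Composing with the chosen identifications to $(\mathds{R}^{n},\mathfrak{a})$ yields an isomorphism of pairs of the type required by Theorem~\ref{thm:15}, which identifies the two associated cocycle classes via an element of $G$; hence the map descends to the orbit space.

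Injectivity is then the converse direction of Theorem~\ref{thm:15}, and surjectivity follows from Theorem~\ref{Thm:xa}: every class in $H^{2}(\mathds{R}^{n},\mathfrak{a},\omega_\mathfrak{a})_\sharp$ is represented by a balanced standard model $\mathfrak{d}_{\gamma,\epsilon,\xi}(\mathds{R}^{n},\mathfrak{a})$, whose canonical isotropic ideal coincides with $(\mathds{R}^{n})^{*}=\mathfrak{l}^{*}$ of dimension $n$ and whose quotient $\mathfrak{j}^\bot/\mathfrak{j}$ is $\mathfrak{a}$ itself. The second assertion is the disjoint union of the first over the dimension $m=\dim(\mathfrak{z}\cap\mathfrak{z}^\bot)$, with $m\ge 1$ since the center is assumed degenerate, and over a system of representatives of the isomorphism classes of $(\mathfrak{a},\omega_\mathfrak{a})$ of complementary dimension $2n-2m$. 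I expect the only real work to be the careful $G$-action bookkeeping in the two well-definedness steps above; every other ingredient is a direct transcription of the theorems listed at the outset.
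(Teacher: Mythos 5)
Your proposal is correct and follows essentially the same route as the paper, which derives this theorem directly from Theorem~\ref{Th:1}, Theorem~\ref{Th:2} and Theorem~\ref{thm:15} (the paper in fact gives no further detail beyond citing these results). Your additional bookkeeping of the well-definedness under the choice of identifications $(S_0,U_0)$ and under isomorphisms of $\mathfrak{g}$ is exactly the content implicitly delegated to the $G$-action in Theorem~\ref{thm:15}, so nothing is missing.
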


\subsection{Nilpotent symplectic Lie algebras}\label{SLA:Anw}

While the classification of all symplectic Lie algebras with degenerate center of a fixed dimension with quadratic extensions uses the knowledge of the isomorphism classes of all symplectic Lie algebras of lower dimension and thus needs the classification of symplectic Lie algebras with non-degenrated center, nilpotent symplectic Lie algebren can be classified without such a knowledge.

\begin{lemma}
Every nilpotent symplectic Lie algebra $(\mathfrak g,\omega)$ with non-degenerate center is abelian.
\end{lemma}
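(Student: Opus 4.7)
The plan is to use the orthogonal splitting for symplectic Lie algebras with non-degenerate center that was recorded at the beginning of Section~\ref{SLA}. Since $\mathfrak z$ is non-degenerate with respect to $\omega$, we have a direct sum decomposition of symplectic Lie algebras
\[
(\mathfrak g,\omega)=(\mathfrak z,\omega|_{\mathfrak z\times\mathfrak z})\oplus(\mathfrak z^\bot,\omega|_{\mathfrak z^\bot\times\mathfrak z^\bot}),
\]
in which $\mathfrak z^\bot$ is an ideal of $\mathfrak g$ with trivial center. This is precisely where the main work was already done, so I would simply invoke it.

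Next I would observe that nilpotency is inherited by ideals: since $\mathfrak g$ is nilpotent, so is the ideal $\mathfrak z^\bot$. Combining this with the previous point, $\mathfrak z^\bot$ is a nilpotent Lie algebra whose center is trivial. By Engel's theorem (or, equivalently, the standard fact that any non-zero nilpotent Lie algebra has a non-trivial center, since the last non-zero term of its lower central series is central), this forces $\mathfrak z^\bot=\{0\}$.

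Consequently $\mathfrak g=\mathfrak z$, so $\mathfrak g$ is abelian, as claimed. The only conceptual input beyond what the paper has already set up is the standard fact that a non-zero nilpotent Lie algebra has non-trivial center; there is no computational obstacle to overcome.
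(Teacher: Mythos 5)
Your proposal is correct and follows exactly the paper's own argument: invoke the orthogonal splitting $\mathfrak g=\mathfrak z\oplus\mathfrak z^\bot$ for non-degenerate center, note that $\mathfrak z^\bot$ is a nilpotent Lie algebra with trivial center, and conclude $\mathfrak z^\bot=\{0\}$. The only difference is that you make explicit the standard fact that a non-zero nilpotent Lie algebra has non-trivial center, which the paper leaves implicit.
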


\begin{proof}
If the center $\mathfrak z$ of a symplectic Lie algebra $\mathfrak g$ is non-degenerate, then $\mathfrak g$ is the direct sum of an abelian symplectic Lie algebra $\mathfrak z$ and the symplectic Lie algebra $\mathfrak z^\bot$ with trivial center. Since $\mathfrak g$ is nilpotent, $\mathfrak z^\bot$ is also nilpotent and thus $\mathfrak z^\bot=\{0\}$. Hence, $\mathfrak g$ is abelian.
\end{proof}

\begin{lemma} \label{Lemma:Nilpotenz}
Assume $(\gamma,\epsilon,\xi)\in Z^2(\mathfrak l,\mathfrak a,\omega_{\mathfrak a})_\sharp$. The sympectic Lie algebra $\mathfrak d_{\gamma,\epsilon,\xi}(\mathfrak l,\mathfrak a,\omega_a)$ is nilpotent if and only if $\mathfrak{a}$ is nilpotent and
\begin{align}\label{eq:Nilpotenz}
\xi(L_1)\cdot\dots\cdot\xi(L_n)=0
\end{align}
for some $n\in\mathds N$ and all $L_1,\dots,L_n\in\mathfrak l$.
\end{lemma}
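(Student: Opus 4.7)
I would prove the two implications separately, reducing the sufficiency to Engel's theorem applied to the quotient $\bar{\mathfrak{g}} := \mathfrak{d}_{\gamma,\epsilon,\xi}(\mathfrak l,\mathfrak a)/\mathfrak l^{*}$.

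For the necessity, suppose $\mathfrak g := \mathfrak{d}_{\gamma,\epsilon,\xi}(\mathfrak l,\mathfrak a)$ is nilpotent. Since $\mathfrak a$ is isomorphic as a Lie algebra to the subquotient $\mathfrak j^{\bot}/\mathfrak j$ with $\mathfrak j=\mathfrak l^{*}$, $\mathfrak a$ is nilpotent. For the condition on $\xi$, in $\mathfrak g$ one has $[L,A]=\xi(L)A+\gamma(L)A$ with $\gamma(L)A\in\mathfrak l^{*}\subset\mathfrak z(\mathfrak g)$. A short induction on $k$ using centrality of $\mathfrak l^{*}$ yields
\[
\ad(L_1)\cdots\ad(L_k)A\;\equiv\;\xi(L_1)\cdots\xi(L_k)A\pmod{\mathfrak l^{*}}
\]
for all $A\in\mathfrak a$, $L_i\in\mathfrak l$. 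If $\mathfrak g$ has nilpotency class $<n$, the left-hand side vanishes for $k=n$; projecting onto $\mathfrak a$ forces $\xi(L_1)\cdots\xi(L_n)=0$.

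For the sufficiency, I would use that $\mathfrak l^{*}$ is a central ideal of $\mathfrak g$: any extension of a nilpotent Lie algebra by a central ideal is nilpotent, so it is enough to show that $\bar{\mathfrak g}=\mathfrak g/\mathfrak l^{*}$ (which carries the induced bracket with $\mathfrak a$ as an ideal and the abelian $\mathfrak l$ as quotient) is nilpotent. By Engel's theorem this reduces to showing that $\ad_{\bar{\mathfrak g}}(X)$ is a nilpotent endomorphism of $\bar{\mathfrak g}$ for every $X=A_0+L_0\in\mathfrak a\oplus\mathfrak l$. A direct inspection of the brackets gives $\ad_{\bar{\mathfrak g}}(X)\,\mathfrak l\subset\mathfrak a$, so it suffices to prove that
\[
\phi\;:=\;\ad_{\bar{\mathfrak g}}(X)\big|_{\mathfrak a}\;=\;\ad_{\mathfrak a}(A_0)+\xi(L_0)
\]
is nilpotent on $\mathfrak a$.

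The heart of the argument, and the step I expect to be the only nontrivial one, is this last nilpotency assertion. Since $\xi(L_0)\in\mathfrak{der}(\mathfrak a)$ by the factor system condition (\ref{eq:lemma1}), $\phi$ is a derivation of $\mathfrak a$, and hence preserves the lower central series $\mathfrak a=\mathfrak a^{(1)}\supset\mathfrak a^{(2)}\supset\cdots\supset\mathfrak a^{(N+1)}=0$ guaranteed by nilpotency of $\mathfrak a$. The inner summand $\ad_{\mathfrak a}(A_0)$ strictly shifts the filtration, $\ad_{\mathfrak a}(A_0)\,\mathfrak a^{(k)}\subset\mathfrak a^{(k+1)}$, so on each graded piece $\mathfrak a^{(k)}/\mathfrak a^{(k+1)}$ the induced map of $\phi$ coincides with that induced by $\xi(L_0)$. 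The hypothesis specialised to $L_1=\cdots=L_n=L_0$ gives $\xi(L_0)^{n}=0$, so $\phi^{n}$ strictly lowers the filtration index, whence $\phi^{nN}=0$. The main obstacle is this bootstrap: a pointwise nilpotency condition on a family of derivations does not in general imply nilpotency of a sum with an inner derivation, and what saves us is precisely the strict filtration shift coming from the inner part together with the nilpotency of $\mathfrak a$ itself.
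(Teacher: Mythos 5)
Your proof is correct, and the necessity direction coincides with the paper's (you are in fact slightly more careful: the paper writes $\xi(L_1)\cdots\xi(L_n)A=[L_1,[\dots[L_n,A]\dots]]$ as an equality, whereas the iterated bracket really only agrees with $\xi(L_1)\cdots\xi(L_n)A$ modulo the central ideal $\mathfrak l^*$, exactly as your induction records; projecting to $\mathfrak a$ then gives the claim either way). For sufficiency the two arguments share the same skeleton --- reduce to the quotient $\mathfrak d/\mathfrak l^*$ using that $\mathfrak l^*$ is central --- but diverge in how nilpotency of that quotient is established. The paper asserts the explicit bound $\mathfrak d^{nm+2}\subset\mathfrak l^*$ on the lower central series, which implicitly requires a word/pigeonhole argument about products of the operators $\ad_{\mathfrak a}(A_i)+\xi(L_i)$ and is left to the reader as ``not hard to see''. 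You instead invoke Engel's theorem, which reduces the multi-operator word problem to the nilpotency of the single derivation $\ad_{\mathfrak a}(A_0)+\xi(L_0)$, and this is handled cleanly by the lower central series filtration of $\mathfrak a$: the inner part shifts the filtration, the graded action is that of $\xi(L_0)$, and $\xi(L_0)^n=0$ by specialising the hypothesis. Your route fills in the step the paper omits and is arguably more robust (no bookkeeping of word lengths), at the cost of not producing the explicit nilpotency class $nm+2$ that the paper's direct estimate yields; both are valid.
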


\begin{proof}
Let $\mathfrak d:=\mathfrak d_{\gamma,\epsilon,\xi}(\mathfrak l,\mathfrak a,\omega_a)$ be nilpotent.
Since $\mathfrak a$ is isomorphic to the nilpotent Lie-Algebra $\mathfrak l^{*^{\bot}}/\mathfrak l^*$, we obtain that $\mathfrak a$ is nilpotent.
Moreover, $\mathfrak d^{n+1}=0$ for some $n>0$ implies
\[ \xi(L_1)\cdots\xi(L_n)A=[L_1,[\dots[L_n,A]\dots]]=0 \] for all $L_1,\dots,L_n\in\mathfrak l$ and all $A\in\mathfrak a$. Thus $\xi(L_1)\cdots\xi(L_n)=0\in\mathfrak{gl}(\mathfrak a)$.
Conversely, assume $\mathfrak a^{m}=\{0\}$ for some $m>0$ and equation (\ref{eq:Nilpotenz}). It is not hard to see that $\mathfrak d^{{nm+2}}\in\mathfrak l^*$ and thus $\mathfrak d^{{nm+3}}=\{0\}$. This shows that $\mathfrak d$ is nilpotent.
\end{proof}

\begin{definition}
For an abelian Lie algebra $\mathfrak l$ and a nilpotent symplectic Lie algebra $\mathfrak a$ let $Z^2(\mathfrak l,\mathfrak a,\omega_{\mathfrak a})_0\subset Z^2(\mathfrak l,\mathfrak a,\omega_\mathfrak a)_\sharp$ denote the set of cocycles $(\gamma,\epsilon,\xi)$, which satisfy equation (\ref{eq:Nilpotenz}). We call these cocycles nilpotent cocycles. Moreover, we define $H^2(\mathfrak l,\mathfrak a,\omega_{\mathfrak a})_0\subset H^2(\mathfrak l,\mathfrak a,\omega_\mathfrak a)_\sharp$ as the set of cohomology classes of nilpotent cocycles.
\end{definition}

\begin{theorem}
The set of isomorphism classes of non-abelian nilpotent $2n$-dimensional symplectic Lie algebras is in one-to-one correspondence to the set
\[\coprod_{m=1}^n\quad\coprod_{[\mathfrak a,\omega_{\mathfrak a}]\in \mathfrak A(2n-2m)} H^2(\mathds R^m,\mathfrak a,\omega_{\mathfrak a})_0/G,\] 
where $\mathfrak A(2n-2m)$ denotes a system of representatives of the isomorphism classes of $(2n-2m)$-dimensional nilpotent symplectic Lie algebras.
\end{theorem}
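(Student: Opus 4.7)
The plan is to derive the statement as a nilpotent refinement of Theorem \ref{Th:Klassifik}. First, let $(\mathfrak g,\omega)$ be a non-abelian nilpotent symplectic Lie algebra of dimension $2n$. The preceding lemma guarantees that its center $\mathfrak z$ is degenerate, for otherwise $\mathfrak g$ would decompose as $\mathfrak z\oplus\mathfrak z^\bot$ with $\mathfrak z^\bot$ a nilpotent symplectic Lie algebra of trivial center, forcing $\mathfrak z^\bot=\{0\}$ and $\mathfrak g$ abelian. Hence $\mathfrak j:=\mathfrak z\cap\mathfrak z^\bot\neq 0$ and, by Theorem~\ref{Th:1}, $\mathfrak g$ carries a canonical structure of balanced quadratic extension of the abelian Lie algebra $\mathfrak l=\mathfrak g/\mathfrak j^\bot$ by the symplectic Lie algebra $(\mathfrak a,\overline\omega)=(\mathfrak j^\bot/\mathfrak j,\overline\omega)$. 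Setting $m=\dim\mathfrak l\geq 1$, we have $\dim\mathfrak a=2n-2m$, and $\mathfrak a$ is nilpotent, being a subquotient of the nilpotent Lie algebra $\mathfrak g$.

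Next, I invoke Theorem~\ref{Th:Klassifik}: isomorphism classes of symplectic Lie algebras of dimension $2n$ with degenerate center are in bijection with
\[\coprod_{m=1}^n\quad\coprod_{[\mathfrak a,\omega_{\mathfrak a}]\in \operatorname{Isom}(2n-2m)} H^2(\mathds R^m,\mathfrak a,\omega_{\mathfrak a})_\sharp/G.\]
The above discussion shows that, under this bijection, non-abelian nilpotent symplectic Lie algebras are precisely those classes whose representative standard model $\mathfrak d_{\gamma,\epsilon,\xi}(\mathds R^m,\mathfrak a)$ is nilpotent. By Lemma~\ref{Lemma:Nilpotenz} this happens if and only if $\mathfrak a$ is nilpotent and the cocycle satisfies condition (\ref{eq:Nilpotenz}), i.e.\ $(\gamma,\epsilon,\xi)\in Z^2(\mathds R^m,\mathfrak a,\omega_{\mathfrak a})_0$. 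In particular one may restrict the inner disjoint union to representatives in $\mathfrak A(2n-2m)$.

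To make the correspondence descend to the quotient, I have to verify that the property of being a nilpotent cocycle is invariant under both the $C^1(\mathfrak l,\mathfrak a)$-action defining equivalence and the $G$-action. For the $C^1$-action this is automatic from Lemma~\ref{lem:Eq:1}: two equivalent cocycles yield isomorphic standard models, so one is nilpotent iff the other is, and hence $Z^2(\mathds R^m,\mathfrak a,\omega_{\mathfrak a})_0$ is $C^1(\mathfrak l,\mathfrak a)$-invariant and the definition of $H^2(\mathds R^m,\mathfrak a,\omega_{\mathfrak a})_0$ is consistent. For the $G$-action I use that the pull back transforms $\xi$ to $(S,U)^*\xi(L)=U\,\xi(SL)\,U^{-1}$, so any product $(S,U)^*\xi(L_1)\cdots(S,U)^*\xi(L_n)$ is a $U$-conjugate of $\xi(SL_1)\cdots\xi(SL_n)$, which vanishes whenever the original does; since the factor $\mathfrak a$ is unchanged up to symplectic isomorphism, its nilpotency is also preserved. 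Thus $G$ acts on $H^2(\mathds R^m,\mathfrak a,\omega_{\mathfrak a})_0$.

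Combining these, the bijection of Theorem~\ref{Th:Klassifik} restricts to a bijection between isomorphism classes of non-abelian nilpotent $2n$-dimensional symplectic Lie algebras and
\[\coprod_{m=1}^n\quad\coprod_{[\mathfrak a,\omega_{\mathfrak a}]\in \mathfrak A(2n-2m)} H^2(\mathds R^m,\mathfrak a,\omega_{\mathfrak a})_0/G.\]
The main obstacle is the bookkeeping around the $G$-invariance of the nilpotency condition and the observation that $\mathfrak a$ itself must be nilpotent, both of which are resolved by Lemma~\ref{Lemma:Nilpotenz} and the explicit form of the pull back; everything else is a direct restriction of the general classification already established.
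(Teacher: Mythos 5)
Your proposal is correct and assembles exactly the ingredients the paper intends for this theorem: the lemma that a non-abelian nilpotent symplectic Lie algebra has degenerate center, Theorem \ref{Th:Klassifik}, and Lemma \ref{Lemma:Nilpotenz}, together with the (correctly verified) invariance of the nilpotency condition under the $C^1(\mathfrak l,\mathfrak a)$- and $G$-actions. This matches the paper's (implicit) argument, so no further comparison is needed.
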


\section{Nilpotent symplectic Lie algebras of dimension six}\label{NSLA}

In \cite{GKM04}, a list of all $6$-dimensional nilpotent symplectic Lie algebras (up to isomorphism) is given.
We compute this list on new way with the help of our new classification scheme and correct some little mistakes in the list in \cite{GKM04}.
\\\quad\\
In the following, as long as nothing else is mentioned, let $\{e_1,\dots,e_k\}$ denote the standard basis of the $k-$dimensional abelian Lie algebra $\mathfrak l=\mathds R^k$ and $\{\sigma^1,\dots,\sigma^k\}$ the corresponding dual basis of $\mathfrak l^*$.

We call a basis $\{v_1,w_1,\dots,v_{n},w_{n}\}$ of the $2n$-dimensional symplectic Lie algebra $(\mathfrak{a},\omega_\mathfrak{a})$  Darboux basis,
if $\omega_\mathfrak{a}(v_{i},w_{i})=1$ for all $i=1,\dots,n$ and $\omega_\mathfrak{a}(v_i,v_j)=\omega_\mathfrak{a}(w_i,w_j)=0$ for all $i,j=1,\dots,n$.

We will describe a Lie algebra by giving a basis and all non-vanishing brackets of basis vectors. If all basis vectors appear in the bracket relations, then we do not mention the basis explicitly.
The Lie algebra $\mathds{R}^2=\Span\{a_1,a_2\}$ becomes a symplectic Lie algebra with $\omega_\mathfrak a=\alpha^1\wedge\alpha^2$,
where $\{\alpha^1,\alpha^2\}$ denotes the dual basis of $\{a_1,a_2\}$.

Moreover, we identify $\Hom(U,V)$ with $U^*\otimes V$ for vector spaces $U$ and $V$.
\\\quad\\
Recall the following well-known theorem.
\begin{theorem}\label{Th:Klass4dim}
Every $4-$dimensional nilpotent symplectic Lie algebra is isomorphic to exactly one of the following one:
\begin{itemize}
\item $\mathds R^4=\Span\{a_1,\dots,a_4\}$ where $\omega=\alpha^1\wedge \alpha^2+\alpha^3\wedge \alpha^4$,
\item $\mathfrak n_4:=\{[a_4,a_1]=a_2,[a_4,a_2]=a_3\}$ where $\omega=\alpha^1\wedge \alpha^2+\alpha^3\wedge \alpha^4$ or
\item $\mathfrak h_3\oplus\mathds R:=\{a_1,\dots,a_4 \mid [a_1,a_2]=a_3\}$ where  $\omega=\alpha^1\wedge\alpha^4+\alpha^2\wedge\alpha^3$.
\end{itemize}
Here $\{\alpha^1,\dots,\alpha^4\}$ denotes the dual basis of $\{a_1,\dots,a_4\}$.
\end{theorem}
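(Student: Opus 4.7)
The plan splits into an algebraic step and a normalization step, both essentially classical. First I would recall (or reprove) the classification of $4$-dimensional real nilpotent Lie algebras: up to isomorphism these are precisely $\mathds R^4$, $\mathfrak h_3\oplus\mathds R$, and $\mathfrak n_4$. The three are easily distinguished by the dimension of their centers ($4$, $2$, and $1$ respectively), so the corresponding symplectic Lie algebras are automatically pairwise non-isomorphic whatever symplectic forms one chooses. Hence it suffices to show existence and uniqueness up to isomorphism of a symplectic form on each underlying Lie algebra.

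For the existence half I would parametrize an arbitrary alternating $2$-form as $\omega=\sum_{i<j}c_{ij}\alpha^i\wedge\alpha^j$ in the given basis, translate the cocycle condition $\rd\omega=0$ into linear equations in the $c_{ij}$ via the structure constants, and express non-degeneracy as the open condition $\omega\wedge\omega\neq 0$, i.e.\ a single quadratic inequality. A short computation yields: on $\mathfrak n_4$ closedness forces $c_{13}=c_{23}=0$ and non-degeneracy becomes $c_{12}c_{34}\neq 0$; on $\mathfrak h_3\oplus\mathds R$ closedness forces $c_{34}=0$ and non-degeneracy becomes $c_{14}c_{23}-c_{13}c_{24}\neq 0$; on $\mathds R^4$ there is no closedness constraint at all. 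In particular, the three forms named in the statement satisfy both conditions, so existence is clear.

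For uniqueness I would parametrize the automorphism group of each $\mathfrak g$ explicitly and act on the space of closed non-degenerate $2$-forms computed above. In the abelian case this reduces to Darboux's theorem for constant forms on $\mathds R^4$. In the two non-abelian cases, the lower central series $\mathfrak g\supset\mathfrak g^1\supset\mathfrak g^2\supset\{0\}$ is preserved by every automorphism, so the matrix of an arbitrary $\varphi\in\operatorname{Aut}(\mathfrak g)$ in the adapted basis $\{a_1,\dots,a_4\}$ is block upper triangular with very few free diagonal parameters. This residual freedom turns out to be exactly enough to rescale the two pivoting coefficients in the normal form to $1$ and to kill the remaining off-diagonal coefficients by suitable shears; sign issues are absorbed by a final reflection in the last basis vector.

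The main obstacle is this last step: verifying that the solvable group $\operatorname{Aut}(\mathfrak g)$ acts transitively on the open subvariety of closed non-degenerate $2$-forms. The bracket-preservation conditions couple the entries of the automorphism matrix to the structure constants in a nontrivial way, so one really has to solve a small nonlinear system for the automorphism parameters in terms of the $c_{ij}$. I expect the computation to be routine in each case — essentially a sequence of rescalings and elementary shears — but it is the only place where the argument is not purely formal.
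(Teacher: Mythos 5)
The paper offers no proof of this statement at all: it is introduced with ``Recall the following well-known theorem'' and used as input for the six-dimensional computation, so there is nothing to compare your argument against line by line. Your two-step strategy is the standard, correct way to establish the result directly. The algebraic step is right: $\mathds R^4$, $\mathfrak h_3\oplus\mathds R$ and $\mathfrak n_4$ are indeed the only four-dimensional real nilpotent Lie algebras, and the centers have dimensions $4$, $2$, $1$, so no two of the listed symplectic Lie algebras can be isomorphic. Your cocycle computations also check out: on $\mathfrak n_4$ one has $\rd\alpha^2=\alpha^1\wedge\alpha^4$, $\rd\alpha^3=\alpha^2\wedge\alpha^4$, whence closedness kills $c_{13},c_{23}$ and $\omega\wedge\omega=2c_{12}c_{34}\,\alpha^1\wedge\alpha^2\wedge\alpha^3\wedge\alpha^4$; on $\mathfrak h_3\oplus\mathds R$ closedness kills $c_{34}$ and non-degeneracy is $c_{14}c_{23}-c_{13}c_{24}\neq 0$. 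The transitivity of $\operatorname{Aut}(\mathfrak g)$ on these open sets of closed non-degenerate forms, which you correctly identify as the only non-formal step, does hold in both non-abelian cases (the automorphism groups are large enough: e.g.\ for $\mathfrak h_3\oplus\mathds R$ one has the full $GL(2,\mathds R)$ on $\Span\{a_1,a_2\}$ modulo the center, plus central shears of $a_1,a_2,a_4$), and carrying it out is the routine rescaling-and-shearing computation you describe. An alternative more in the spirit of the paper would be to run its own classification scheme one dimension down, i.e.\ compute $H^2(\mathds R^2,\{0\},0)_0/G$ and $H^2(\mathds R,\mathds R^2,\alpha^1\wedge\alpha^2)_0/G$ exactly as Sections \ref{NSLA:1}--\ref{NSLA:5} do in dimension six; that route has the advantage of not presupposing the classification of nilpotent Lie algebras, but your direct argument is shorter and entirely adequate.
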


In the following, we determine all $6$-dimensional nilpotent symplectic Lie algebras up to isomorphism. From our previous observations, we know that it suffices to calculate the $G$-orbits of 
\[H^2(\mathds R^3,\{0\},0)_0,\quad H^2(\mathds R^2,\mathds R^2,\alpha^1\wedge \alpha^2)_0,\]
\[H^2(\mathds R,\mathds R^4,\omega_\mathfrak a)_0,\quad H^2(\mathds R,\mathfrak h_3\oplus\mathds R,\omega_\mathfrak a)_0\quad\text{ and }\quad H^2(\mathds R,\mathfrak n_4,\omega_\mathfrak a)_0,\]
where $\omega_\mathfrak a$ is given as in Theorem \ref{Th:Klass4dim}.

\subsection{Case $\mathfrak l=\mathds R^3$, $\mathfrak a=\{0\}$}\label{NSLA:1}

Let $\{e_1,e_2,e_3\}$ denote the standard basis of $\mathfrak l=\mathds R^3$. Assume $\mathfrak a=\{0\}$ and let $\{\sigma^1,\sigma^2,\sigma^3\}$ denote the dual basis of $\mathfrak l^*$. Moreover, $\{v_1=e_2\wedge e_3,v_2=e_3\wedge e_1, v_3=e_1\wedge e_2\}$ is a basis of $\mathfrak l\wedge \mathfrak l$.
For $\epsilon\in C^2(\mathfrak l,\mathfrak l^*)$ we define $M_\epsilon\in Mat(3,\mathds R)$ by $(M_\epsilon)_{ij}=\epsilon(v_j)(e_i)$.

\begin{lemma}
Every element in $H^2(\mathds{R}^3,0,0)_0/G$ can be represented by exactly one $(0,\epsilon,0)$ where 
\[M_\epsilon\in\left\{\begin{pmatrix}1&0&0\\0&b&0\\0&0&-1-b\end{pmatrix},\begin{pmatrix}1&1&0\\0&1&0\\0&0&-2\end{pmatrix},\begin{pmatrix}1&z&0\\-z&1&0\\0&0&-2\end{pmatrix},\begin{pmatrix}0&1&0\\-1&0&0\\0&0&0\end{pmatrix},\begin{pmatrix}0&1&0\\0&0&1\\0&0&0\end{pmatrix}\,\Big|\, b\in[0,1], z>0\right\}.\]
\end{lemma}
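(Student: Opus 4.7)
The plan is to leverage the fact that $\mathfrak a=\{0\}$ trivialises almost the whole cocycle apparatus. A quadratic cocycle degenerates to a triple $(0,\epsilon,0)$ with $\epsilon\in C^2(\mathfrak l,\mathfrak l^*)$, and since $C^1(\mathfrak l,\{0\})=0$ one has $H^2(\mathds R^3,0,0)=Z^2(\mathds R^3,0,0)$. Moreover $\mathfrak a$ is nilpotent and $\xi=0$ satisfies (\ref{eq:Nilpotenz}) vacuously, so $H^2(\mathds R^3,0,0)_0=H^2(\mathds R^3,0,0)_\sharp$. The first concrete step is to observe that (\ref{eq:lemma1})--(\ref{eq:lemma4}) are empty and that the Jacobi-type condition (\ref{eq:lemma5}) translates, via the identifications built into $M_\epsilon$, to the single scalar equation $\tr M_\epsilon=0$.

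Next I would unpack Definition \ref{SLA:def:balanced}. Condition (b) is vacuous since $\mathfrak a=\{0\}$. In (a) the only admissible $A$ is $0$, so (a.1)--(a.3) are automatic and only (a.4) survives, giving $\mathds L=\{L\in\mathfrak l\mid \epsilon(L,\cdot)=0\}$. Expanding $\epsilon(L,e_k)$ as a linear combination of the columns of $M_\epsilon$ and using antisymmetry should show that $L\in\mathds L$ forces every row of $M_\epsilon$ to be proportional to $(L_1,L_2,L_3)$; consequently $(0,\epsilon,0)$ is balanced iff $\tr M_\epsilon=0$ and $\operatorname{rk} M_\epsilon\ge 2$.

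For the group $G$ only the factor $\operatorname{Aut}(\mathfrak l)=GL(\mathds R^3)$ is non-trivial. Transporting the pull-back formula of Section \ref{SLA:Isomkl} to $M_\epsilon$ uses the identification $\Lambda^2\mathfrak l\cong\mathfrak l^*$ via $v_i\leftrightarrow\sigma^i$; the relevant $\Lambda^2 S$ is the cofactor matrix $(\det S)S^{-T}$, which contributes an extra factor of $\det S$. After the substitution $A:=S^T$ the action becomes
\[M_\epsilon\longmapsto (\det A)\,A\,M_\epsilon\,A^{-1},\]
i.e., similarity combined with an arbitrary non-zero scalar (since $A=\lambda I$ gives $\det A=\lambda^3$, exhausting $\mathds R^\times$).

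The substantive step is then the classification of trace-zero real $3\times 3$ matrices of rank $\ge 2$ modulo similarity and non-zero scaling. I would organise this by real Jordan normal form: diagonalisable with three real eigenvalues, the real Jordan block $\{a,a,-2a\}$, one real eigenvalue plus a complex-conjugate pair, and the nilpotent cases; the zero matrix and the rank-$1$ two-step nilpotent are discarded by the rank condition. Each type is then scaled to the listed representative: the diagonalisable real family, normalised so that the largest eigenvalue is $1$ and parametrised by the middle one, yields the first matrix with $b\in[0,1]$ and simultaneously absorbs the sub-cases $b=0$ (eigenvalues $1,0,-1$, rank $2$) and $b=1$ (eigenvalues $1,1,-2$, diagonalisable); the remaining four matrices handle the Jordan block $\{1,1,-2\}$, the complex-conjugate pair with $z>0$, the $\{\pm i,0\}$ case, and the regular nilpotent. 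The hardest part is showing distinctness: the unordered spectrum up to a common non-zero scalar, together with the Jordan type and the rank, separates the five families, and within the first family one must check that the only non-trivial rescaling preserving the canonical form is $\mu=-1/(1+b)$, which sends $b$ to $-b/(1+b)\notin[0,1]$ for $b\in(0,1]$, ensuring that distinct values of the parameter give distinct orbits.
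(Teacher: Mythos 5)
Your proposal is correct and follows essentially the same route as the paper: reduce to $(0,\epsilon,0)$ with $H^2=Z^2$, translate (\ref{eq:lemma5}) into $\tr M_\epsilon=0$ and condition (a) into $\operatorname{rk}M_\epsilon\ge 2$, identify the $G$-action as $M_\epsilon\mapsto(\det A)AM_\epsilon A^{-1}$ (conjugation combined with arbitrary non-zero scaling, since $\dim\mathfrak l$ is odd), and classify trace-free rank-$\ge 2$ matrices up to similarity and scaling. Your treatment of distinctness within the diagonalisable family (the rescaling $\mu=-1/(1+b)$) is in fact slightly more explicit than the paper's, which leaves that verification implicit.
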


\begin{proof}
We prove the lemma by determining $H^2(\mathds R^3,0)_0$ and describing the action of $G$ on that cohomology set. 

Since $\mathfrak a=\{0\}$ the set of cocycles $Z^2(\mathds R^3,0)_0$ consists of all triples $(0,\epsilon,0)$ satisfying condition (\ref{eq:lemma5}) and condition (a) on page \pageref{eq:aundb}.
Since $\mathfrak l$ is three-dimensional, condition (\ref{eq:lemma5}) is equivalent to
\begin{align}\label{eq:R3Leer1}
\epsilon(e_1,e_2)(e_3)+\epsilon(e_2,e_3)(e_1)+\epsilon(e_3,e_1)(e_2)=\tr(M_\epsilon)=0.
\end{align}
Moreover, condition (a) is satisfied if and only if $L=0$ is the only element in $\mathfrak l$, which satisfies
\begin{align}\label{eq:R3Leer2}
\epsilon(L,\tilde L)=0\in\mathfrak l^*
\end{align}
for all $\tilde L\in\mathfrak l$.
Equation (\ref{eq:R3Leer2}) holds for $L=\lambda_1e_1+\lambda_2e_2+\lambda_3e_3$ where $\lambda_1,\lambda_2,\lambda_3\in\mathds R$ if and only if $\epsilon(L,e_i)=0\in\mathfrak l^*$ for all $i=1,\dots,3$. In addition this condition is equivalent to
\begin{align*}
-\lambda_2\epsilon(v_3)+\lambda_3\epsilon(v_2)=0\in\mathfrak l^*,\quad
\lambda_1\epsilon(v_3)-\lambda_3\epsilon(v_1)=0\in\mathfrak l^*,\quad
-\lambda_1\epsilon(v_2)+\lambda_2\epsilon(v_1)=0\in\mathfrak l^*.
\end{align*}
This shows that condition (a) holds if and only if $M_\epsilon$ has at least $2$ linearly independent columns.

Two triples $(0,\epsilon_1,0)$ and $(0,\epsilon_2,0)$ are equivalent in $Z^2(\mathds R^3,0)_0$ if and only if they are equal.
The next step is to describe the action of $G$ with the help of $M_\epsilon$.
Here we mention that $G=\operatorname{Aut}(\mathfrak l)$. Thus we use the notation $S$ for elements in $G$ instead of $(S,U)$ in the following.
\begin{lemma}\label{lemma:R3Leer}
Two cohomology classes $[0,\epsilon,0]$ and $[0,\epsilon',0]$ in $H^2(\mathds R^3,0)_0$ lie in the same $G$-orbit if and only if there is a $c\neq 0\in\mathds R$ such that the matrices $M_{\epsilon'}$ and $cM_\epsilon$ are conjugate.
\end{lemma}

\begin{proof}
Assume $\epsilon,\epsilon'\in C^2(\mathfrak l,\mathfrak l^*)$.
Let $S:\mathfrak l\rightarrow\mathfrak l$ be a bijective linear map given by $S=(s_{ij})_{i,j=1,2,3}$ with respect to the basis $\{e_1,e_2,e_3\}$.
Then 
\begin{align*}
\epsilon(Se_i,Se_j)(Se_k)&=\epsilon(s_{1i}e_1+\dots +s_{3i}e_3,s_{1j}e_1+\dots+s_{3j}e_3)(Se_k)\\
&=\sum_{1\leq r<t\leq 3}(s_{ri}s_{tj}-s_{rj}s_{ti})\epsilon(e_r,e_t)(Se_k)\displaybreak\\
&=\sum_{u=1}^3\sum_{1\leq r<t\leq 3}s_{uk}(s_{ri}s_{tj}-s_{rj}s_{ti})\epsilon(e_r,e_t)(e_u)
\end{align*}
and using the Cramer's rule yields that $(0,\epsilon',0)=S^*(0,\epsilon,0)$  holds if and only if
\[M_{\epsilon'}=\det(S^T)S^TM_\epsilon {S^{-1}}^T.\]
Since the dimension of $\mathfrak l$ is odd we obtain our assertion.
\end{proof} 

Finally the observations from above and Lemma \ref{lemma:R3Leer} show that $H^2(\mathfrak l,\mathfrak a)_0/G$ can be described by all trace free $3\times 3$-matrices with an rang of at least two modulo conjugation and modulo multiplication with real numbers unequal to zero. This yields the system of representatives given in the lemma.
\end{proof}

\subsection{Case $\mathfrak l=\mathds R^n$, $\mathfrak a=\mathfrak n_4$}

Denote $\mathfrak a=\mathfrak n_4:=\{[a_4,a_1]=a_2,[a_4,a_2]=a_3\}$ with $\omega_\mathfrak a=\alpha^1\wedge\alpha^2+\alpha^3\wedge\alpha^4$, where $\{\alpha^1,\dots,\alpha^4\}$ denotes the dual basis of $\mathfrak a^*$.

\begin{lemma}
The set $Z^2(\mathds R^n,\mathfrak n_4,\omega_\mathfrak a)$ does not consist nilpotent cocycles.
\end{lemma}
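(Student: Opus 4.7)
Suppose, for contradiction, that $(\gamma,\epsilon,\xi) \in Z^2(\mathds{R}^n,\mathfrak{n}_4,\omega_{\mathfrak{a}})_\sharp$ satisfies the nilpotency condition (\ref{eq:Nilpotenz}). The strategy is to show that the generator $a_3$ of $\mathfrak{z}(\mathfrak{n}_4) = \mathds{R}\cdot a_3$ lies in each of the four subspaces $\mathfrak{z}(\mathfrak{a})$, $\ker\beta_0$, $\mathfrak{a}^{\mathfrak{l}}_{\xi}$ and $\ker\gamma_0$ that appear in balanced condition (b) of Definition \ref{SLA:def:balanced}. Because that center is already one-dimensional, this would force the intersection in (b) to equal $\mathds{R}\cdot a_3$; since $\omega_{\mathfrak{a}}(a_3,a_3)=0$, this line is totally isotropic, contradicting the non-degeneracy demanded by (b).

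Setting $L_1 = \cdots = L_n = L$ in (\ref{eq:Nilpotenz}) gives $\xi(L)^n = 0$, so every $\xi(L)$ is a nilpotent derivation of $\mathfrak{n}_4$. I would then exploit the chain of characteristic ideals
\[ 0 \;\subset\; \mathfrak{z}(\mathfrak{n}_4) \;\subset\; [\mathfrak{n}_4,\mathfrak{n}_4] \;\subset\; C_{\mathfrak{n}_4}([\mathfrak{n}_4,\mathfrak{n}_4]) \;\subset\; \mathfrak{n}_4, \]
which in the given basis reads $0 \subset \mathds{R}\cdot a_3 \subset \Span\{a_2,a_3\} \subset \Span\{a_1,a_2,a_3\} \subset \mathfrak{n}_4$. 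Each step has a one-dimensional quotient on which a nilpotent derivation must act as zero; reading this back yields $\xi(L)a_3 = 0$ (so $a_3 \in \mathfrak{a}^{\mathfrak{l}}_\xi$), $\xi(L)a_2 \in \mathds{R}\cdot a_3$, $\xi(L)a_1 \in \Span\{a_2,a_3\}$ and, crucially, $\xi(L)a_4 \in \Span\{a_1,a_2,a_3\}$ for every $L \in \mathfrak{l}$.

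Since $\omega_{\mathfrak{a}} = \alpha^1\wedge\alpha^2 + \alpha^3\wedge\alpha^4$, the vector $a_3$ is $\omega_{\mathfrak{a}}$-orthogonal to $\Span\{a_1,a_2,a_3\}$. Plugging the above inclusions into definition (\ref{def:beta}) of $\beta$ yields $\beta(a_3,X) = 0$ for every $X \in \mathfrak{n}_4$, so $a_3 \in \ker\beta_0$; the same substitution also gives $\beta(a_1,a_2) = 0$. The remaining inclusion $a_3 \in \ker\gamma_0$ comes from cocycle equation (\ref{eq:lemma2}) specialised to $A_1 = a_4$, $A_2 = a_2$, which uses $[a_4,a_2]_{\mathfrak{a}} = a_3$ to give
\[ \gamma(L)\, a_3 \;=\; \beta(\xi(L)a_4,\, a_2) \;+\; \beta(a_4,\, \xi(L)a_2). \]
Expanding $\xi(L)a_4 \in \Span\{a_1,a_2,a_3\}$ and $\xi(L)a_2 \in \mathds{R}\cdot a_3$, the right-hand side reduces to a combination of the values $\beta(a_1,a_2)$, $\beta(a_2,a_2)$, $\beta(a_3,a_2)$ and $\beta(a_4,a_3) = -\beta(a_3,a_4)$, all of which vanish by the above. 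Hence $\gamma(L)a_3 = 0$ for every $L$, completing the contradiction.

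The main conceptual step is the characteristic-filtration argument of the second paragraph, whose output---that $\xi(L)a_4$ has no $a_4$-component---is precisely what conspires with the Darboux splitting of $\omega_{\mathfrak{a}}$ to annihilate $\beta(a_3,\cdot)$. Everything afterwards is formal; in particular, only (\ref{eq:lemma2}) among the cocycle equations is needed, and balanced condition (a) plays no role.
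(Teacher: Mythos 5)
Your proof is correct and follows essentially the same route as the paper: both arguments show that $a_3$ lies in $\mathfrak z(\mathfrak a)\cap\ker\beta_0\cap\mathfrak a^{\mathfrak l}_\xi\cap\ker\gamma_0$, so that this intersection equals the isotropic line $\Span\{a_3\}$ and condition (b) of Definition \ref{SLA:def:balanced} fails. The only difference is cosmetic: you extract $\im\xi(L)\subseteq\Span\{a_1,a_2,a_3\}$ from the characteristic filtration $0\subset\mathfrak z\subset[\mathfrak n_4,\mathfrak n_4]\subset C_{\mathfrak n_4}([\mathfrak n_4,\mathfrak n_4])\subset\mathfrak n_4$, whereas the paper computes directly that a nilpotent derivation has image in $\Span\{a_2,a_3\}$; your weaker bound suffices for the orthogonality to $a_3$ and the evaluation of (\ref{eq:lemma2}) at $(a_4,a_2)$.
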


\begin{proof}
At first we compute some useful properties of nilpotent derivtions of $\mathfrak n_4$. 
Nilpotent derivations $D$ map the center $\mathfrak z(\mathfrak n_4)=\Span\{a_3\}$ to zero, since it is one-dimensional.
Moreover, using \[Da_2=D[a_4,a_1]=[Da_4,a_1]+[a_4,Da_1]\in \Span\{a_2,a_3\}\] and the nilpotency of $D$ we obtain $Da_2\in\Span\{a_3\}$.
Thus $Da_1\in\Span\{a_2,a_3\}$ because of
\[0=[Da_1,a_2]+[a_1,Da_2]=[Da_1,a_2].\]
Hence the image of a nilpotent derivation $D$ lies in $\Span\{a_2,a_3\}$.

Now, assume $(\gamma,0,\xi)\in Z^2(\mathds R^n,\mathfrak n_4,\omega_\mathfrak{a})$ where $\xi(L)$ is nilpotent for all $L\in\mathds R^n$.
Then $\beta_0(a_3)=0$.
Using condition (\ref{eq:lemma2}) yields
\begin{align*}
0&=\beta(\xi(L)a_2,a_4)+\beta(a_2,\xi(L)a_4)-\gamma(L)[a_2,a_4]\\
&=-\omega_\mathfrak a(\xi(\cdot)\xi(L)a_2,a_4)-\omega_\mathfrak a(\xi(L)a_2,\xi(\cdot)a_4)-\omega_\mathfrak a(\xi(\cdot)a_2,\xi(L)a_4)-\omega_\mathfrak a(a_2,\xi(\cdot)\xi(L)a_4)+\gamma(L)a_3\\
&=\gamma(L)a_3
\end{align*}
for all $L\in\mathds R^n$. Thus $a_3\in\ker\gamma_0$.
Now, this shows that $(\gamma,0,\xi)$ is not balanced, since 
\[\mathfrak z(\mathfrak a)\cap\mathfrak a^\mathfrak l_\xi\cap\ker\beta_0\cap\ker\gamma_0=\Span\{a_3\}\] is degenerate and hence condition (b) is not satisfied.
\end{proof}

\subsection{Case $\mathfrak l=\mathds R$, $\mathfrak a=\mathds R^4$}

Let $\{e_1\}$ denote the standard basis of $\mathfrak l=\mathds R$ and $\{a_1,a_2,a_3,a_4\}$ the standard basis of $\mathfrak a=\mathds R^4$. Moreover, let $\{\sigma^1\}$ and $\{\alpha^1,\alpha^2,\alpha^3,\alpha^4\}$ denote the dual basis of $\mathfrak l^*$ and $\mathfrak a^*$ respectively. Let $\omega_{\mathfrak a}$ denote the symplectic form $\omega_\mathfrak a=\alpha^1\wedge\alpha^2+\alpha^3\wedge\alpha^4$ of $\mathds R^4.$

In this section we compute a system of representatives for $H^2(\mathds R,\mathds R^4,\omega_{\mathfrak a})_0/G$.
Therefor, we set
$\gamma^0:\mathfrak l\rightarrow \Hom(\mathfrak a,\mathfrak l^*)$ as
\begin{align}\label{eq:brauch1}
\gamma^0=\sigma^1\otimes\alpha^1\otimes\sigma^1.
\end{align}
Moreover, we define linear maps $\xi_1,\xi_\pm,\xi^\kappa:\mathfrak l\rightarrow\Hom(\mathfrak a)$ by 
\begin{align}
\xi_1&=\sigma^1\otimes\alpha^2\otimes a_1,\label{eq:brauch2}\\
\xi_\pm&=\sigma^1\otimes\alpha^3\otimes a_1\pm\sigma^1\otimes\alpha^2\otimes a_3\text{ und}\label{eq:brauch3}\\ 
\xi^\kappa&=\sigma^1\otimes\big(\alpha^4\otimes a_1+\alpha^2\otimes a_3+\alpha^3\otimes a_4+\alpha^3\otimes\kappa a_1\big)\label{eq:brauch4}
\end{align}
for $\kappa\in\mathds R$.

\begin{lemma}
Every element of $H^2(\mathds R,\mathds R^4,\omega_\mathfrak a)_0/G$ can be represented by the cohomology class of exactly one of the following cocycles:
\[(\gamma^0,0,\xi_1),(\gamma^0,0,\xi_\pm),(\gamma^0,0,\xi^\kappa),\quad\kappa\in\mathds R.\]
\end{lemma}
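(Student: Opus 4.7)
The plan is to exploit the drastic simplifications coming from $\dim\mathfrak{l}=1$ and $\mathfrak{a}$ being abelian, reducing the problem to a normal-form classification of a nilpotent endomorphism $\xi:=\xi(e_1)\in\mathfrak{gl}(\mathfrak{a})$ together with a functional $\tilde\gamma\in\mathfrak{a}^*$ (representing $\gamma(e_1)$ via $\gamma(e_1)(A) = \tilde\gamma(A)\,\sigma^1$), under the combined action of coboundaries and the symmetry group $G = \mathds{R}^*\times\operatorname{Sp}(\mathfrak{a},\omega_\mathfrak{a})$.

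First, I would reduce the cocycle data. Since $\Lambda^2\mathfrak{l}=0$, one has $\epsilon=0$ and $\alpha=0$ automatically, so conditions \eqref{eq:lemma1}, \eqref{eq:lemma3}, \eqref{eq:lemma4}, \eqref{eq:lemma5} hold trivially. Because $\mathfrak{a}$ is abelian, \eqref{eq:lemma2} collapses to the single requirement that $\xi$ preserves the antisymmetric form $B(X,Y):=\omega_\mathfrak{a}(\xi X,Y)+\omega_\mathfrak{a}(X,\xi Y)$ in the sense $B(\xi X,Y)+B(X,\xi Y)=0$. A direct computation from \eqref{eq:gex1} shows that a coboundary $t=\tau(e_1)\in\mathfrak{a}$ keeps $\xi$ fixed and shifts $\tilde\gamma$ by the functional $A\mapsto 2\omega_\mathfrak{a}(t,\xi A)+\omega_\mathfrak{a}(\xi t,A)$, while $(c\cdot\id, U)\in G$ acts by $\xi\mapsto c\,U\xi U^{-1}$ and $\tilde\gamma\mapsto c^2\,\tilde\gamma\circ U^{-1}$.

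Second, I would use conditions (a) and (b) to rule out all but four types. Condition (b) already excludes $\xi=0$, since no three-dimensional subspace of $\mathfrak{a}$ is non-degenerate for $\omega_\mathfrak{a}$, so $\tilde\gamma=0$ is forced, and then (a) fails; hence $\xi\ne 0$, and (a) is automatic. Among nonzero nilpotent $\xi$, the types with $\xi^2=0$ and rank two give $\im\xi=\ker\xi$ Lagrangian, forcing the intersection $\ker\xi\cap\ker\tilde\gamma$ to be a nonzero isotropic subspace, breaking (b); similarly, a rank-one $\xi$ with $B\ne 0$ produces an unavoidable isotropic line inside $\ker\xi\cap\ker B\cap\ker\tilde\gamma$. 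Moreover type $[3,1]$ cannot lie in $\mathfrak{sp}(\mathfrak{a},\omega_\mathfrak{a})$. What remains are precisely four cases: type $[2,1,1]$ with $B=0$, type $[4]$ with $B=0$, type $[3,1]$ with $B\ne0$, and type $[4]$ with $B\ne0$.

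For each of these classes I would bring $\xi$ into the named normal form $\xi_1$, $\xi^0$, $\xi_\pm$, respectively $\xi^\kappa$, by applying $\operatorname{Sp}(\mathfrak{a},\omega_\mathfrak{a})$-conjugation and scaling by $c$, and normalize $\tilde\gamma$ to $\alpha^1$ using the coboundary shift and the residual stabilizer of $\xi$ in $G$; (b) and nilpotency are then checked directly. The resulting representatives are pairwise non-equivalent by the following invariants: the Jordan type of $\xi$ distinguishes $\xi_1$ from $\xi^0$ and from the $B\ne 0$ families; within the family $\xi_\pm$, the sign of $\omega_\mathfrak{a}(\xi^2 v,v)$ at a regular cyclic vector $v\notin\im\xi$ is preserved by the scaling factor $c^2>0$ and by symplectic conjugation; within the family $\xi^\kappa$, the parameter $\kappa$ is recovered as a scalar invariant built from $\omega_\mathfrak{a}(v,\xi^2 v)$ and $\omega_\mathfrak{a}(v,\xi^3 v)$ for a cyclic vector $v$. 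The main obstacle will be this last point: one must verify that $\kappa$ cannot be absorbed by the combination of scaling by $c$, conjugation by the stabilizer of a regular nilpotent in $\operatorname{Sp}(\mathfrak{a},\omega_\mathfrak{a})$, a change of cyclic vector modulo $\im\xi$, and the coboundary shift on $\tilde\gamma$, which requires tracking precisely how each of these operations acts on the relevant scalar invariants.
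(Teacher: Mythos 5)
Your overall strategy coincides with the paper's: reduce to the pair $(\xi(e_1),\gamma(e_1))$ with $\epsilon=\alpha=0$, use the balancedness conditions (a) and (b) to exclude $\xi=0$ and the degenerate Jordan types, bring $\xi(e_1)$ into normal form by symplectic conjugation and scaling, normalize $\gamma$ with the coboundary action and the residual stabilizer, and separate the surviving orbits by the sign of $\omega_\mathfrak a(\xi(e_1)^2v,v)$ and a scalar invariant recovering $\kappa$. Your packaging by Jordan type together with the dichotomy $B=0$ versus $B\neq 0$ (where $\beta=-B\otimes\sigma^1$) is a slightly cleaner version of the paper's case split by $\dim\mathfrak a^{\mathfrak l}_\xi$, and the remark that type $[3,1]$ cannot occur with $B=0$ because $[3,1]$ is not a nilpotent Jordan type in $\mathfrak{sp}_4$ is a genuine shortcut; your formulas for the coboundary shift $A\mapsto 2\omega_\mathfrak a(t,\xi A)+\omega_\mathfrak a(\xi t,A)$ and for the $G$-action are correct.

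There is, however, one concrete gap, located exactly in the step you flag as the main obstacle. A scalar ``built from $\omega_\mathfrak a(v,\xi^2 v)$ and $\omega_\mathfrak a(v,\xi^3 v)$'' cannot separate the $\xi^\kappa$-orbits: any monomial $\omega_\mathfrak a(v,\xi^2v)^a\,\omega_\mathfrak a(v,\xi^3v)^b$ scales by $\mu^{2(a+b)}$ under $v\mapsto\mu v$ and by $c^{2a+3b}$ under $(c\cdot\id,U)\in G$, so well-definedness forces $a+b=0$ while $G$-invariance forces $2a+3b=0$, leaving only $a=b=0$. Concretely, if $c\,U\xi^{\kappa'}(e_1)U^{-1}=\xi^{\kappa}(e_1)$, the ratio $\omega_\mathfrak a(v,\xi^2v)/\omega_\mathfrak a(v,\xi^3v)$ only yields $\kappa=\kappa'/c$, and the scaling $c$ is pinned down solely by the requirement that $\gamma^0$ be preserved up to coboundary. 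This is precisely why the paper's invariant (\ref{eq:KappaNeqIsom}) carries the extra factor $\gamma(L,(\xi(L))^3(v),L)^{2/7}$: it cancels the residual scaling weight, and one must additionally check (as the paper does via (\ref{eq:bre1}) and (\ref{eq:bre2}), using $\im\xi(e_1)^3\subset\ker\beta_0$) that this $\gamma$-value is unchanged by the coboundary shift. So your separation argument for the family $\xi^\kappa$ must incorporate $\gamma$ into the invariant; with that repair, the plan goes through and reproduces the paper's proof.
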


\begin{proof}
At first it is not hard to see that the given cocycles are nilpotent ones.
Now, we show that the cohomology class of an arbitrary nilpotent cocylce lies in the $G$-orbit of a cohomology class given in the lemma.
So, let $(\gamma,\epsilon,\xi)$ be a nilpotent cocycle.
Since $\mathfrak l$ is one-dimensional, we have $\epsilon=0$ and $\alpha=0$.
It holds $\xi(e_1)\neq 0$: Conversely, for $\xi=0$ we get $\beta=0$ and thus, condition (b) implies that
$\mathfrak z(\mathfrak a)\cap\mathfrak a^\mathfrak l_\xi\cap\ker\gamma_0\cap\ker\beta_0=\ker\gamma_0$ is non-degenerate. Hence $\gamma=0$.
But for $\xi=0$ and $\gamma=0$ it is easy to see that condition (a) is not satisfied.\\
Moreover, $\ker\xi(e_1)\cap \im\xi(e_1)\neq \{0\}$ is isotropic. 
To see this, we choose vectors $v_1,v_2\in\ker\xi(e_1)\cap \im\xi(e_1)$. Let $\overline v_i$ denote inverse images of $v_i$ under $\xi(e_1)$ for $i=1,2$.
Then 
\begin{align*}
\omega_\mathfrak a(v_1,v_2)&=\omega_\mathfrak a(\xi(e_1)\overline v_1,\xi(e_1)\overline v_2)=-\frac{1}{2}\beta(\xi(e_1)\overline v_1,\overline v_2)(e_1)-\frac{1}{2}\beta(\overline v_1,\xi(e_1)\overline v_2)(e_1)=0,
\end{align*}
because of condition (\ref{eq:lemma2}).
Especially 
\begin{align}\label{eq:isotr}
\im\xi(e_1)\neq\ker\xi(e_1),
\end{align}
since $\mathfrak a^\mathfrak l_\xi=\ker\xi(e_1)\subset\ker\beta_0$ and thus 
\[\mathfrak z(\mathfrak a)\cap\mathfrak a^\mathfrak l_\xi\cap\ker\gamma_0\cap\ker\beta_0=\ker\xi(e_1)\cap\ker\gamma(e_1)\neq \{0\}\]
is isotropic otherwise.

Now, let $\mathfrak a^\mathfrak l_\xi$ be three-dimensional. This means that the image of $\xi(e_1)$ is one-dimensional.
At first we show that $\beta=0$.
Therefor, we note that the at least $2$-dimensional subspace $\im(\xi(e_1))^\bot\cap\ker\xi(e_1)$ is contained in $\ker\beta_0$.
Since, especially, $\im\xi(e_1)\subset \im(\xi(e_1))^\bot\cap\ker\xi(e_1)$ lies in the kernel of $\beta_0$, the subspace $\ker\beta_0$ contains a $2$-dimensional isotropic subspace.
Because of the dimensions the subspace  $\ker\xi(e_1)\cap\ker\gamma(e_1)\cap\ker\beta_0$ is not trivial. This space is moreover non-degenerate because of condition (a) and hence $\ker\beta_0$ is not $2$-dimensional, thus $\beta= 0$. Especially, using condition (b) we see that $\gamma\neq 0$ and 
$\ker\xi(e_1)\cap\ker\gamma(e_1)$ is a $2-$dimensional non-degenerate subspace of $\mathfrak a=\mathds R^4$.
So, we can choose $v_3,v_4\in \ker\xi(e_1)\cap\ker\gamma(e_1)$ such that $\omega_{\mathfrak a}(v_3,v_4)=1$.
Since $\beta=0$ the derivation $\xi(e_1)$ is skewsymmetric with respect to $\omega_\mathfrak a$.
Thus the image of $\xi(e_1)$ is orthogonal to $\Span\{v_3,v_4\}\subset\ker\xi(e_1)$ and, especially, it holds $\im\xi(e_1)\nsubseteq\ker\gamma(e_1)$ because of (b).
We choose $0\neq v_1\in\im\xi(e_1)$.
Then $v_1\notin\ker\gamma(e_1)$.
Moreover, we choose $v_2\in\ker\gamma(e_1)$ such that $v_2$ is orthogonal to the non-degenerate subspace $\Span\{v_3,v_4\}=\ker\xi(e_1)\cap\ker\gamma(e_1)$ and $\omega_\mathfrak a(v_1,v_2)=1$ holds.
We have $\xi(e_1)v_2=\lambda v_1$ for some $\lambda\neq 0$.
Now, $\gamma$, $\epsilon$ and $\xi$ are given by 
\[ \gamma(\tilde L)=x\nu^1\otimes Z_1,\quad \epsilon=0,\quad \xi(\tilde L)=\nu^2\otimes v_1\]
with respect to the basis $\{\tilde L:=e_1/\lambda\}$ of $\mathfrak l$ and the Darboux basis $\{v_1,v_2,v_3,v_4\}$ of $\mathfrak a$ for some $x\neq 0$.
Here $\{\nu^1,\nu^2,\nu^3,\nu^4\}$ denotes the dual basis of $\mathfrak a^*$ for $\{v_1,v_2,v_3,v_4\}$ and $\{Z_1\}$ the dual basis for $\{\tilde L\}$.
Thus, for every $(\gamma,\epsilon,\xi)\in Z^2(\mathfrak l,\mathfrak a,\omega_\mathfrak a)_0$ with three-dimensional $\mathfrak a^\mathfrak l_\xi$ there is an $(S,U)\in G$ such that $(S,U)^*(\gamma,\epsilon,\xi)=(x\gamma^0,0,\xi_1)$ for some $x\neq 0$.
Moreover, $[x\gamma^0,0,\xi_1]$ and $[\gamma^0,0,\xi_1]$ lie in the same $G$-orbit. This can be seen directly by using $(S_2,U_2)=(x^{-2/5},\diag(x^{1/5},x^{-1/5},1,1))$.

Now, let $(\gamma,0,\xi)$ be a nilpotent cocylce with two-dimensional $\mathfrak a^\mathfrak l_\xi$.
Condition (\ref{eq:isotr}) yields $\xi(e_1)^2\neq 0$. Moreover, the subspaces $\mathfrak a^\mathfrak l_\xi=\ker\xi(e_1)$ and $\im\xi(e_1)$ are isotropic because of condition (\ref{eq:lemma2}):
Therefor, assume $v_1\in\ker\xi(e_1)\cap \im\xi(e_1)=\im(\xi(e_1)^2)$, $v_4\in\ker\xi(e_1)$ and choose an inverse image $v_2\in \im\xi(e_1)$ of $v_1$ under $\xi(e_1)$. 
Then
\[\omega_\mathfrak{a}(v_1,v_4)=\omega_\mathfrak{a}(v_1,v_4)+2\omega_\mathfrak{a}(v_2,\xi(e_1)v_4)+\omega_\mathfrak{a}(v_3,\xi(e_1)^2v_4)=0\]
and
\[-\omega_\mathfrak{a}(v_1,v_2)=\omega_\mathfrak{a}(v_1,v_2)+2\omega_\mathfrak{a}(v_2,v_1)+\omega_\mathfrak{a}(v_3,\xi(e_1)v_1)=0,\]
where $v_3$ denotes an inverse image of $v_2$ under $\xi(e_1)$.
Especially, we have
\begin{align}\label{eq:ausg}
\mathfrak a_\xi^{\mathfrak l}\cap\ker\gamma_0\cap\ker\beta_0=0.
\end{align}
So, let us fix a $v_1\neq 0$ in the image of $\xi(e_1)^2$.
Then $v_1\notin \ker\gamma_0$ because of condition (\ref{eq:ausg}) and $\im(\xi(e_1)^2)=\ker\xi(e_1)\cap \im\xi(e_1)\subset \ker\beta_0$.
Moreover, we fix a $v_2\in\ker\gamma(e_1)\cap\im\xi(e_1)$ with $\xi(e_1)v_2=v_1$. 
Since $\omega_\mathfrak a$ is non-degenerate, the three-dimensional subspace of $\mathfrak a$ spanned by $\ker\xi(e_1)$ and $\im\xi(e_1)$ is not isotropic.
Thus there is a vector in the kernel of $\xi(e_1)$ which is not orthogonal to $v_2$.
Then there is also a $v_3\in\ker\gamma(e_1)$ with $\xi(e_1)v_3=v_2$ and $\omega_{\mathfrak a}(v_2,v_3)=0$. 
Finally, we choose $v_4\in \Span\{v_1,v_3\}^\bot\cap\ker\xi(e_1)$ such that $\omega_{\mathfrak a}(v_2,v_4)=1$.
Now, $\gamma$, $\epsilon$ and $\xi$ are given by
\[\gamma(e_1)=x\nu^1\otimes \sigma^1+\tilde y\nu^4\otimes Z_1,\quad \epsilon=0,\quad \xi(e_1)=\pm\nu^2\otimes\tilde v_3+\nu^3\otimes\tilde v_1\]
with respect to the Darboux basis 
\[\tilde v_1:=\frac{\sign(b)}{\sqrt{|b|}}v_1,\quad\tilde v_2:=\frac{1}{\sqrt{|b|}}v_3,\quad\tilde v_3:=\frac{\sign(b)}{\sqrt{|b|}}v_2,\quad\tilde v_4:=\frac{\sqrt{|b|}}{\sign(b)}v_4\}\] of $\mathfrak a$,
where $b:=\omega_\mathfrak{a}(v_1,v_3)\neq 0$, and with respect to the basis $\{e_1\}$ of $\mathfrak l=\mathds R$ for a suitable $x\neq 0$ and $\tilde y\in\mathds R$.
Here $\{\nu^1,\dots,\nu^4\}$ denotes the dual basis of $\{\tilde v_1,\dots,\tilde v_4\}$.

As in the previous case, it is possible to rescale the bases.
We set $S=1/\sqrt[3]{x}$ and $U=\diag(\sqrt[3]{x},1/\sqrt[3]{x},1,1)$. Then 
\[(S,U)^*(\sigma^1\otimes(x\alpha^1+\tilde y\alpha^4)\otimes\sigma^1,0,\xi_\pm)=(\sigma^1\otimes(\alpha^1+y\alpha^4)\otimes\sigma^1,0,\xi_\pm)\]
for some $y\in\mathds R$.
Finally  
$[\sigma^1\otimes(\alpha^1+y\alpha^4)\otimes\sigma^1,0,\xi_\pm]$ and $[\gamma^0,0,\xi_\pm]$ lie in the same $G$-orbit.
Therefor, we compute on the one hand that
\[\big(\gamma^0,0,\xi_\pm\big)\big(\sigma^1\otimes(\pm\frac{1}{3}ya_2\pm\frac{2}{3}y^2a_3)\big)=\big(\sigma^1\otimes(\alpha^1\pm\frac{2}{3}y^2\alpha^2\mp\frac{2}{3}y\alpha^3+\frac{1}{3}y\alpha^4)\otimes\sigma^1,0,\xi_\pm\big):\]
 Assume $\tau=\sigma^1\otimes(\tau_2a_2+\tau_3a_3)$ with $\tau_2=\pm\frac{1}{3}y$ and $\tau_3=\pm\frac{2}{3}y^2$. Then 
\begin{align*}
\tau^*=(-\tau_2\alpha^1+\tau_3\alpha^4)\otimes\sigma^1.
\end{align*}
In addition, we have
\begin{align*}
\tau^*(\xi_\pm(L)A)&=\sigma^1(L)(-\tau_2\alpha^1+\tau_3\alpha^4)(\alpha^3(A)a_1\pm\alpha^2(A)a_3)\sigma^1\\
&=-\sigma^1(L)(\tau_2\alpha^3(A))\sigma^1=\big((-\sigma^1\otimes\tau_2\alpha^3\otimes\sigma^1)(L)\big)(A)
\end{align*}
and
\begin{align*}
((\beta_0\circ\tau)(L))(A)&=\beta(\tau L,A)=\sigma^1(L)(\tau_2\alpha^3\mp\tau_2\alpha^4-\tau_3\alpha^2)(A)\sigma^1\\
&=\big((\sigma^1\otimes(\tau_2\alpha^3\mp\tau_2\alpha^4-\tau_3\alpha^2)\otimes\sigma^1)(L)\big)(A).
\end{align*}
Thus
\begin{align*}
\gamma^0+\tilde\tau^*\circ\xi_\pm-\tilde\tau^*\circ ad_\mathfrak{a}\circ \tau-\beta_0\circ\tau&=\sigma^1\otimes(\alpha^1-2\tau_2\alpha^3+\tau_3\alpha^2\pm\tau_2\alpha^4)\otimes\sigma^1\\
&=\sigma^1\otimes(\alpha^1\pm\frac{2}{3}y^2\alpha^2\mp\frac{2}{3}y\alpha^3+\frac{1}{3}y\alpha^4)\otimes\sigma^1.
\end{align*}
On the other side, for $S=1$ and 
\[U=\begin{pmatrix}1&0&\pm\frac{2}{3}y&\frac{2}{3}y\\0&1&0&0\\0&\frac{2}{3}y&1&0\\0&\mp\frac{2}{3}y&0&1\end{pmatrix}\] we have
\[(S,U)^*(\sigma^1\otimes(\alpha^1+y\alpha^4)\otimes\sigma^1,0,\xi_\pm)=(\sigma^1\otimes(\alpha^1\pm\frac{2}{3}y^2\alpha^2\mp\frac{2}{3}y\alpha^3+\frac{1}{3}y\alpha^4)\otimes\sigma^1,0,\xi_\pm).\]
Hence 
$[\sigma^1\otimes(\alpha^1+y\alpha^4)\otimes\sigma^1,0,\xi_\pm]$ and $[\gamma^0,0,\xi_\pm]$ lie in the same $G$-orbit.

As the last case, let $\mathfrak a^\mathfrak l_\xi$ be one-dimensional. Then $\xi(e_1)^3\neq 0$. We choose $v_1,\dots,v_4\in\mathfrak a$ with $\xi(e_1)v_i=v_{i-1}$. Since 
\[\mathfrak z(\mathfrak a)\cap\mathfrak a^\mathfrak l_\xi\cap\ker\gamma_0\cap\ker\beta_0=\ker\gamma(e_1)\cap \Span\{v_1\}\] 
is non-degenerate we obtain $v_1\notin\ker\gamma(e_1)$.
Because of condition (\ref{eq:lemma2}) it holds
\begin{align}
\omega_\mathfrak a(v_1,v_2)&=-\beta(\xi(e_1)v_2,v_3)(e_1)-\beta(v_2,\xi(e_1)v_3)(e_1)=0,\label{eq:bre1}\\
\omega_\mathfrak a(v_1,v_3)&=-\frac{1}{2}\beta(\xi(e_1)v_2,v_4)(e_1)-\frac{1}{2}\beta(v_2,\xi(e_1)v_4)(e_1)=0\label{eq:bre2}.
\end{align}
Hence $\im(\xi(e_1))^3=\ker\xi(e_1)\cap\im(e_1)$ is orthogonal to the image of $\xi(L)$.
Moreover, using condition (\ref{eq:lemma2}) yields
\begin{align}\label{eq:7}
 \omega_\mathfrak a(v_2,v_3)=-\omega_\mathfrak a(v_1,v_4)\neq 0.
\end{align}
Especially, we can assume w.l.o.g that $\omega_{\mathfrak a}(v_1,v_4)=\pm 1$.
Now, we choose $\{\tilde v_1:=v_1,\tilde v_2:=\pm v_4-\frac{f}{2} v_2,\tilde v_3:=v_3\mp\frac{f}{2} v_1,\tilde v_4:=\pm v_2-ev_1\}$
as a Darboux basis of $\mathfrak{a}$ and $\{\tilde L=\pm e_1\}$ as a basis of $\mathfrak{l}$, where $e=\omega_\mathfrak a(v_2,v_4)$ and $f=\omega_\mathfrak a(v_3,v_4)$.
Then 
\[\xi(\tilde L)=\nu^2\otimes\tilde v_3+\nu^3\otimes(\tilde\kappa\tilde v_1+\tilde v_4)+\nu^4\otimes\tilde v_1\quad\text{und}\quad\gamma(\tilde L,\tilde v_1,\tilde L)\neq 0\]
for some $\tilde\kappa\in\mathds R$. Here $\{\nu^1,\dots,\nu^4\}$ denotes the dual basis of $\mathfrak a^*$ for $\{\tilde v_1,\dots,\tilde v_4\}$.
This shows that $[\gamma,0,\xi]$ lies in the $G$-orbit of $[x\gamma^0,0,\xi^{\tilde\kappa}]$, since the equivalence class of $(x\gamma^0,0,\xi^{\tilde\kappa})$ consists of all triples $(\gamma,0,\xi^{\tilde\kappa})\in Z^2(\mathfrak l,\mathfrak a,\omega_\mathfrak a)_\sharp$ which satisfy $\gamma(e_1)(a_1)=x\sigma^1$:
Therefor, assume $\tau=\sigma^1\otimes(\tau_1a_1+\cdots+\tau_4a_4)$. Then
\begin{align}
\tau^*&=(\tau_1\alpha^2-\tau_2\alpha^1+\tau_3\alpha^4-\tau_4\alpha^3)\otimes\sigma^1
\end{align}
and
\begin{align}
\tau^*(\xi^{\tilde\kappa}(L)A)&=\sigma^1(L)(\tau_1\alpha^2-\tau_2\alpha^1+\tau_3\alpha^4-\tau_4\alpha^3)(\alpha^2(A)a_3+\alpha^3(A)a_4+\alpha^3(A)\tilde\kappa a_1+\alpha^4(A)a_1)\sigma^1\label{eq:5}\nonumber\\
&=\sigma^1(L)(-\tau_4\alpha^2+\tau_3\alpha^3-\tau_2(\tilde\kappa\alpha^3+\alpha^4))(A)\sigma^1\\
&=\big((\sigma^1\otimes(-\tau_4\alpha^2+(\tau_3-\tau_2\tilde\kappa)\alpha^3-\tau_2\alpha^4)\otimes\sigma^1)(L)\big)(A)\nonumber
\end{align}
Moreover, we have
\begin{align}
((\beta_0\circ\tau)(L))(A)&=\beta(\tau L,A)=\sigma^1(L)(\tilde\kappa\tau_2\alpha^3-\tilde\kappa\tau_3\alpha^2)(A)\sigma^1\label{eq:6}\\
&=\big((\sigma^1\otimes(\tilde\kappa\tau_2\alpha^3-\tilde\kappa\tau_3\alpha^2)\otimes\sigma^1)(L)\big)(A)\nonumber
\end{align}
and
\begin{align*}
x\gamma^0+\tilde\tau^*\circ\xi^{\tilde\kappa}-\tilde\tau^*\circ ad_\mathfrak{a}\circ \tau-\beta_0\circ\tau=\\
\sigma^1\otimes(x\alpha^1+(-\tau_4+\tau_3\tilde\kappa)\alpha^2+(\tau_3-2\tau_2\tilde\kappa)\alpha^3-\tau_2\alpha^4)\otimes\sigma^1.
\end{align*}
Finally 
\[(k^2,\diag(k^{-3},k^3,k,k^{-1}))^*(x\gamma^0,0,\xi^{\tilde\kappa})=(\gamma^0,0,\xi^\kappa)\]
for $\kappa=\tilde\kappa k^{-2}$ and $k:=\sqrt[7]{x}$.\\
\quad\\
In the last step, we want to show that no cohomology classes of two distinct cocycles given in the set of the lemma have the same $G$-orbit.
Since the equivalence of two cocycles $(\gamma^0,0,\xi),(\gamma^0,0,\tilde\xi)\in Z^2(\mathds R,\mathds R^4,\omega_\mathfrak a)$ implie the equality $\xi=\tilde\xi$, it suffices to show that on the one hand $[\gamma^0,0,\xi_+]$ and $[\gamma^0,0,\xi_-]$ lie in distinct $G$-orbits and on the other hand that $[\gamma^0,0,\xi^\kappa]$ and $[\gamma^0,0,\xi^{\tilde\kappa}]$ lie in distinct $G$-orbits for $\kappa\neq\tilde\kappa$.

For $(\gamma,0,\xi)\in Z^2(\mathfrak l,\mathfrak a,\omega_\mathfrak{a})_0$ with two-dimensional $\mathfrak a^\mathfrak l_\xi$ we choose a vector $v\in\mathfrak a$ with $\xi^2(e_1)v\neq 0$ and a $L\neq 0$ in $\mathfrak l$ and consider the sign of $\omega_{\mathfrak a}(\xi^2(L)v,v)\neq 0$.
This sign is independent from the choice of $v$, since the image of $\xi(e_1)$ is orthogonal to $\ker\xi(e_1)\cap \im\xi(e_1)$ and hence $\omega_\mathfrak{a}(\xi^2(L)\tilde v,\tilde v)$ is a positive multiple of $\omega_\mathfrak{a}(\xi^2(L)v,v)$ for every $\tilde v\in \mathfrak a$ satisfying $\xi^2(e_1)\tilde v\neq 0$.
It is clear that the sign is independent from the choice of $L\neq 0$ and invariant under equivalence. Hence the sign does only depend on the structure of the Lie algebra and the symplectic form and is a well-defined invariant of the $G$-orbits.
Since $\omega_\mathfrak a(\xi_\pm^2(e_1)a_2,a_2)=\pm 1$, the cohomology classes $[\gamma^0,0,\xi_+]$ und $[\gamma^0,0,\xi_-]$ lie in different $G$-orbits.

Now, assume $(\gamma,0,\xi)\in Z^2(\mathfrak l,\mathfrak a,\omega_\mathfrak a)_0$ and $\xi^3(e_1)\neq 0$. We choose a $L\neq 0\in\mathds R$ and a $v\in\mathfrak a$ such that $\xi^3(e_1)v\neq 0$ and consider
\begin{align}\label{eq:KappaNeqIsom}
\frac{\omega_\mathfrak a(\xi^2(L)v,v)\gamma(L,(\xi(L))^3(v),L)^{2/7}} {\omega_\mathfrak a(\xi^3(L)v,v)^{8/7}}.
\end{align} As before, we can show that the value of (\ref{eq:KappaNeqIsom}) is independent from the choice $v$.
Moreover, it is easy to prove that the value is also independent of the choice of $L\neq 0$.
Thus the value of (\ref{eq:KappaNeqIsom}) is only given by the structure of the Lie algebra and the symplectic form (hence, especially, invariant under the $G$-action).
Furthermore, the term (\ref{eq:KappaNeqIsom}) is invariant under equivalence, because elements in the image of $\xi(e_1)^3$ lie in the kernel of $\beta_0$ (compare equations (\ref{eq:bre1}) and (\ref{eq:bre2})).
Since the value of the term (\ref{eq:KappaNeqIsom}) equals $\kappa$ for $(\gamma^0,0,\xi^\kappa)$, we obtain that $\kappa$ is an invariant of the  $G$-orbits and hence the set given in the lemma forms a system of representatives.
\end{proof}

\subsection{Case $\mathfrak l=\mathds R$, $\mathfrak a=\mathfrak h_3\oplus\mathds R$}

As before, let $\{e_1\}$ denote the standard basis of $\mathds R$ and $\{\sigma^1\}$ its dual basis. Moreover, let 
$\{a_1,\dots,a_4\}$ a basis of $\mathfrak h_3\oplus\mathds R$ satisfying $[a_1,a_2]=a_3$ and $\omega_\mathfrak a=\alpha^1\wedge\alpha^4+\alpha^2\wedge\alpha^3$, where $\{\alpha^1,\dots,\alpha^4\}$ denotes the corresponding dual basis.

In this section we determine $H^2(\mathds R,\mathfrak h_3\oplus\mathds R,\omega_\mathfrak a)_0/G$.
Basically, we argue as in the case $\mathfrak l=\mathds R$ and $\mathfrak a=\mathds R^4$ excluding that the choice of the basis of $\mathfrak h_3\oplus\mathds R$ is much more limited. So, the basis has to preserve the symplectic form and the fixed structure of the commutator.
Thus it is not easy to find a basis directly such that a triple $(\gamma,\epsilon,\xi)$ has a nice form with respect to that basis.
On the other hand nilpotent derivations of $\mathfrak h_3\oplus\mathds R$ are also of special form.

For $\kappa_1,\kappa_2\in\{\pm 1\}$, $b,c,i,j,m,n,l\in\mathds R$ we define
\[\xi_{\kappa_1}^{\kappa_2}(b,c,i,j,m,n,l):=
\begin{pmatrix}
 \kappa_1bc  & \kappa_2b^2 & 0& 0 \\
-\kappa_2c^2 & -\kappa_1bc & 0& 0 \\
      i      &     j       & 0& l \\
      m      &     n       & 0& 0
\end{pmatrix}.\]

\begin{lemma}
A linear map $\xi:\mathds R\rightarrow \mathfrak{der}(\mathfrak a)$ satisfies the condition (\ref{eq:Nilpotenz}) in Lemma \ref{Lemma:Nilpotenz}, if and only if $\xi(e_1)=\xi_{\kappa_1}^{\kappa_2}(b,c,i,j,m,n,l)$ for $\kappa_1,\kappa_2\in\{\pm 1\}$, $b,c,i,j,m,n,l\in\mathds R$.
\end{lemma}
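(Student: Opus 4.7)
The plan is to first determine all derivations of $\mathfrak a = \mathfrak h_3 \oplus \mathds R$ explicitly, then impose nilpotency in block-triangular form, and finally rewrite the resulting algebraic constraint in the parametrized form $\xi_{\kappa_1}^{\kappa_2}(b,c,i,j,m,n,l)$. Denote the matrix entries of $D:=\xi(e_1)$ in the basis $\{a_1,a_2,a_3,a_4\}$ by $D_{ij}$, so that $Da_j = \sum_i D_{ij} a_i$. The only nontrivial Lie bracket is $[a_1,a_2]=a_3$; applying $D$ yields $Da_3 = (D_{11}+D_{22})a_3$. Using in addition that $a_3$ and $a_4$ are central (so that $[X,a_3]=[X,a_4]=0$ for all $X$ combined with the derivation rule forces $Da_3, Da_4 \in \mathfrak z(\mathfrak a) = \Span\{a_3, a_4\}$), one concludes that the most general derivation has block form
\[
D = \begin{pmatrix} D_{11} & D_{12} & 0 & 0 \\ D_{21} & D_{22} & 0 & 0 \\ D_{31} & D_{32} & D_{11}+D_{22} & \lambda \\ D_{41} & D_{42} & 0 & \mu \end{pmatrix}.
\]
Conversely, any such matrix is visibly a derivation, because the only non-trivial bracket has been accounted for.

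Next, I would exploit the $D$-invariant flag $0 \subset \mathds R a_3 \subset \mathds R a_3\oplus \mathds R a_4 \subset \mathfrak a$. Nilpotency of $D$ is equivalent to nilpotency of the three induced maps on the successive quotients: on $\mathds R a_3$ the action is by the scalar $D_{11}+D_{22}$, on $(\mathds R a_3 \oplus \mathds R a_4)/\mathds R a_3$ by the scalar $\mu$, and on $\mathfrak a /(\mathds R a_3\oplus \mathds R a_4)$ by the $2\times 2$ matrix $\bigl(\begin{smallmatrix} D_{11} & D_{12}\\ D_{21} & D_{22}\end{smallmatrix}\bigr)$. Hence $D$ is nilpotent if and only if
\[ D_{11}+D_{22}=0,\qquad \mu=0,\qquad D_{11}^2 + D_{12}D_{21} = 0, \]
the last equation being the vanishing of the determinant of the $2\times 2$ quotient block (its trace vanishing automatically by the first condition).

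The remaining task is purely algebraic: write every real solution of $D_{12}D_{21} = -D_{11}^2 \leq 0$ in the form $D_{12} = \kappa_2 b^2$, $D_{21} = -\kappa_2 c^2$, $D_{11} = \kappa_1 bc$ with $\kappa_1,\kappa_2 \in \{\pm 1\}$ and $b,c \in \mathds R$. Choose $\kappa_2 \in \{\pm 1\}$ so that $\kappa_2 D_{12} \geq 0$ (either sign works if $D_{12}=0$), set $b := \sqrt{\kappa_2 D_{12}}$ and $c := \sqrt{-\kappa_2 D_{21}}$ (both well defined since $D_{12}D_{21}\le 0$), and then $D_{11}^2 = b^2c^2$ fixes $D_{11} = \kappa_1 bc$ for a suitable $\kappa_1 \in \{\pm 1\}$. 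Relabeling the remaining free entries $D_{31},D_{32},D_{41},D_{42},\lambda$ as $i,j,m,n,l$ produces exactly the matrix $\xi_{\kappa_1}^{\kappa_2}(b,c,i,j,m,n,l)$. The only mild obstacle is the degenerate case $D_{12} = 0$ or $D_{21}=0$, which forces $D_{11}=0$ and leaves the signs $\kappa_1,\kappa_2$ ambiguous; this does not affect the statement, which only asserts existence of some admissible parameters.
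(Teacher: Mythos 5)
Your proof is correct and follows essentially the same route as the paper: derivations preserve the derived algebra and the center, which gives the block-triangular form, and nilpotency reduces to the vanishing of $\mu$, of the trace, and of the determinant of the upper-left $2\times 2$ block, which is then reparametrized by $(\kappa_1,\kappa_2,b,c)$. The only nitpick is that in the degenerate case $D_{12}=0$, $D_{21}\neq 0$ the sign $\kappa_2$ is not free but forced to equal $-\sign(D_{21})$ so that $c=\sqrt{-\kappa_2 D_{21}}$ is real; this does not affect the argument.
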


\begin{proof}
Because of $\mathfrak l=\mathds R$ the condition (\ref{eq:Nilpotenz}) is satisfied if and only if $\xi(e_1)$ is a nilpotent derivation.
So, let $\xi(e_1)$ be a nilpotent derivation. 
Since derivations map the derived Lie algebra and the center to itself, the subspaces $\Span\{e_3\}$ and $\Span\{e_3,e_4\}$ are invariant under $\xi(e_1)$.
All eigenvalues and the trace vanishes for a nilpotent $\xi(e_1)$.
Thus
\[\xi(e_1)=\begin{pmatrix}a&d&0&0\\e&-a&0&0\\ *&*&0&* \\ *&*&0&0\end{pmatrix},\] where $a^2=-de\geq 0$.

Conversely, direct computations show that $\xi(e_1)=\xi_{\kappa_1}^{\kappa_2}(b,c,i,j,m,n,l)$ is a nilpotent derivation for $\kappa_1,\kappa_2\in\{\pm 1\}$, $b,c,i,j,m,n,l\in\mathds R$.
\end{proof}

Now, we can describe when a triple $(\gamma,\epsilon,\xi)$ is a nilpotent cocycle.

\begin{lemma}\label{lemma:Z2RH3xR0}
A triple $(\gamma,\epsilon,\xi)$ is a nilpotent cocycle if and only if $\gamma=\sigma^1\otimes(x_1\alpha^1+x_2\alpha^2+x_3\alpha^3+x_4\alpha^4)\otimes\sigma^1$, $\epsilon=0$ and $\xi(e_1)=\xi_{\kappa_1}^{\kappa_2}(b,c,i,j,m,n,l)$, where the parameters satisfy one of the following conditions:
\begin{itemize}
\item $l\neq 0$, $c=0$, $x_3=m(l+\kappa_2b^2)\neq 0$,
\item $l=0$, $(b,c)\neq (0,0)$, $x_3=-\kappa_1bc(i+n)+\kappa_2b^2m+\kappa_2c^2j$, $\kappa_1bx_3+\kappa_2cx_4\neq 0$.
\end{itemize}
\end{lemma}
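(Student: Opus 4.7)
The plan is to unpack, in coordinates, each condition defining $Z^2(\mathds R,\mathfrak h_3\oplus\mathds R,\omega_\mathfrak a)_\sharp$ together with the nilpotency condition (\ref{eq:Nilpotenz}), and to show that the result is precisely the case distinction stated. Since $\dim\mathfrak l=1$ one has $C^2(\mathfrak l,\mathfrak l^*)=0$ and $C^2(\mathfrak l,\mathfrak a)=0$, so $\epsilon$ and $\alpha$ vanish automatically; consequently conditions (\ref{eq:lemma3})--(\ref{eq:lemma5}) are trivial, and (\ref{eq:lemma1}) reduces to $\xi(e_1)\in\mathfrak{der}(\mathfrak a)$. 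Combining this with nilpotency and the preceding lemma on nilpotent derivations forces $\xi(e_1)=\xi_{\kappa_1}^{\kappa_2}(b,c,i,j,m,n,l)$, while the asserted form of $\gamma$ is just $\gamma(e_1)\in\Hom(\mathfrak a,\mathfrak l^*)\cong\mathfrak a^*$ written in the basis $\alpha^1,\dots,\alpha^4$.

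The first substantive step is to impose condition (\ref{eq:lemma2}) on every basis pair $(a_p,a_q)$. I begin by computing the matrix of $\beta$ in the Darboux basis, whose only non-vanishing entries turn out to be $\beta(a_1,a_2)(e_1)=i-n$, $\beta(a_1,a_3)(e_1)=\kappa_2c^2$, $\beta(a_1,a_4)(e_1)=-\kappa_1bc$, $\beta(a_2,a_3)(e_1)=\kappa_1bc$ and $\beta(a_2,a_4)(e_1)=-\kappa_2b^2-l$. On the only pair with non-trivial bracket, $(a_1,a_2)$, condition (\ref{eq:lemma2}) yields the scalar equation
\[x_3=\beta(\xi(e_1)a_1,a_2)(e_1)+\beta(a_1,\xi(e_1)a_2)(e_1)=-\kappa_1bc(i+n)+\kappa_2b^2m+\kappa_2c^2j+ml.\]
On the central pairs $(a_1,a_4)$ and $(a_2,a_4)$ one obtains, after cancellation, the single further constraint $lc=0$; the remaining central pairs collapse to tautologies. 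The dichotomy $l=0$ or $c=0$ therefore already underlies the two cases of the lemma.

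The second step is to translate conditions (a) and (b) of Definition \ref{SLA:def:balanced}. For (a), the key observation is that $\ad_\mathfrak a(A)$ has only its third row possibly non-zero, so the equation $\xi(e_1)+\ad_\mathfrak a(A)=0$ in (a.1) would force $b=c=l=m=n=0$; consequently (a) is automatic as soon as $l\neq 0$ or $(b,c)\neq(0,0)$, which is precisely the hypothesis of each of the two cases. For (b), the centre $\mathfrak z(\mathfrak a)=\Span\{a_3,a_4\}$ is totally $\omega_\mathfrak a$-isotropic, so the required non-degeneracy is equivalent to triviality of the intersection $\mathfrak z(\mathfrak a)\cap\ker\xi(e_1)\cap\ker\beta_0\cap\ker\gamma(e_1)$. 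Writing $v=y_3a_3+y_4a_4$, the four kernel conditions read $y_4l=0$, the two linear relations $\kappa_2c^2y_3=\kappa_1bcy_4$ and $(l+\kappa_2b^2)y_4=\kappa_1bcy_3$, and $y_3x_3+y_4x_4=0$; a short case analysis in the two regimes yields exactly $x_3=m(l+\kappa_2b^2)\neq 0$ when $l\neq 0,c=0$, and $\kappa_1bx_3+\kappa_2cx_4\neq 0$ when $l=0,(b,c)\neq(0,0)$. The excluded sub-case $l=0$, $b=c=0$ is directly seen to violate (b), since there $\mathfrak z(\mathfrak a)\subset\ker\beta_0$ and the intersection always contains a non-trivial isotropic vector.

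The main obstacle is purely bookkeeping: one must verify that (\ref{eq:lemma2}) on the central pairs truly collapses to the single equation $lc=0$ without further hidden obstructions, and that the small linear systems from (b) reduce precisely to the stated scalar inequalities in both regimes. Once the entries of $\beta$ in the Darboux basis above have been tabulated, every remaining step is a routine computation.
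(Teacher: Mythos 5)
Your proposal is correct and follows essentially the same route as the paper: for $\dim\mathfrak l=1$ only condition (\ref{eq:lemma2}) is non-trivial, it yields exactly the relation for $x_3$ together with $cl=0$, and the balanced conditions reduce to the stated non-vanishing statements via the triviality of $\mathfrak z(\mathfrak a)\cap\mathfrak a^{\mathfrak l}_\xi\cap\ker\beta_0\cap\ker\gamma_0$. The only cosmetic differences are that the paper reduces (\ref{eq:lemma2}) to two equations using orthogonality of $\xi(e_1)^2\mathfrak a$ to $a_3$ before computing, where you tabulate $\beta$ on all basis pairs, and that the paper deduces condition (a) from (b) while you read it off directly from (a.1) under the case hypotheses; both are sound.
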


\begin{proof}
At first, a triple $(\gamma,\epsilon,\xi)$ where $\xi(e_1)=\xi_{\kappa_1}^{\kappa_2}(b,c,i,j,m,n,l)$ is a cocycle if and only if equation (\ref{eq:lemma2}) holds, since the remaining conditions (\ref{eq:lemma1}) and (\ref{eq:lemma3}) - (\ref{eq:lemma5}) are trivial for $\mathfrak l=\mathds R$.
Especially, $\epsilon=0$.

Since $\xi^2(e_1)\mathfrak a\subset\Span\{a_3,a_4\}$ is orthogonal to $a_3\in\ker\xi(e_1)\cap\mathfrak z(\mathfrak a)$,
condition (\ref{eq:lemma2}) is equivalent to
\begin{align}
\beta(\xi(e_1)a_1,a_2)+\beta(a_1,\xi(e_1)a_2)=\gamma(e_1)a_3,\label{eq:oben1}\\
\beta(\xi(e_1)A,a_4)+\beta(A,\xi(e_1)a_4)=0\label{eq:oben2}
\end{align}
for all $A\in\Span\{a_1,a_2\}$.
Since, moreover, $\xi^2(e_1)A$ is othogonal to $a_4$ for all $A\in\Span\{a_1,a_2\}$ and $\xi^2(e_1)a_4=0$ holds, the equation (\ref{eq:oben2}) is again equivalent to 
\begin{align}
2\omega_{\mathfrak a}(\xi(e_1)A,\xi(e_1)a_4)=0\label{eq:oben2b}
\end{align} for all $A\in\Span\{a_1,a_2\}$.
Finally we obtain 
\[x_3=-\kappa_1bc(i+n)+m(l+\kappa_2b^2)+\kappa_2c^2j\]
from equation (\ref{eq:oben1}) and $cl=0$ from equation (\ref{eq:oben2b}).

For a $(\gamma,\epsilon,\xi)\in Z^2(\mathds R,\mathfrak h_3\oplus\mathds R,\omega_\mathfrak a)$ satisfying $\xi(e_1)=\xi_{\kappa_1}^{\kappa_2}(b,c,i,j,m,n,l)$ we have 
\begin{align}
\mathfrak z(\mathfrak a)\cap\mathfrak a^\mathfrak l_\xi=
\begin{cases}\Span\{a_3\},&\text{ für } l\neq 0\\
             \Span\{a_3,a_4\},&\text{ für } l=0.
\end{cases}
\end{align}

Assume $l\neq 0$. Then $c=0$ and thus $a_3\in\ker\beta_0$. Hence condition (b) on page \pageref{eq:aundb} is satisfied if and only if the first equation of the lemma holds.
Let us now consider the case $l=0$.
Because of $\beta_0(a_3)=(\kappa_2c^2\alpha^1+\kappa_1bc\alpha^2)\otimes\sigma^1$ and $\beta_0(a_4)=(-\kappa_1bc\alpha^1-\kappa_2b^2\alpha^2)\otimes\sigma^1$ for $l=0$ we have
\begin{align}
\Span\{a_3,a_4\}\cap\ker\beta_0=\begin{cases}
\Span\{a_3,a_4\},\quad&\text{for }l=0, b=c=0,\\
\Span\{\kappa_1bx_3+\kappa_2cx_4\},\quad&\text{für }l=0, (b,c)\neq(0,0).
\end{cases}
\end{align}
Thus $\mathfrak z(\mathfrak a)\cap\mathfrak a^\mathfrak l_\xi\cap\ker\gamma_0\cap\ker\beta_0$ is non-degenerate if and only if the second condition of the lemma holds.
Finally condition (b) (see page \pageref{eq:aundb}) implies condition (a), because $L=0$ follows from (a.1). So, we have proved the lemma.
\end{proof}

It is easy to see that the linear map
\[U(x,p,q,k):\mathfrak a\rightarrow \mathfrak a,\quad
U(x,p,q,k)=\begin{pmatrix}1&x&0&0\\0&1&0&0\\p&q&1&-x\\k&p+kx&0&1\end{pmatrix}\]
is an isomorphism of $(\mathfrak a,\omega_\mathfrak a)$ for all $x,k,p,q\in\mathds R$.
\\\quad\\
Let $(\gamma,0,\xi)$ be a nilpotent cocycle with $\xi(e_1)=0$ on $\mathfrak a/\mathfrak z(\mathfrak a)$. This property is invariant under equivalence and $G$-action. Moreover, we have $\xi(e_1)\neq 0$ on $\mathfrak z(\mathfrak a)$ since the cocycle is balanced.

For every $x\in\mathds R$ and $f\neq 0$, the set $\{f^{-2}a_1,fa_2+xa_1,f^{-1}a_3,f^2a_4-fxa_3\}$ forms a basis of $\mathfrak a$ such that the commutator and symplectic form is given by $[f^{-2}a_1,fa_2+xa_1]=f^{-1}a_3$ and $\omega_\mathfrak a=\nu^1\wedge\nu^4+\nu^2\wedge\nu^3$,
where $\{\nu^1,\dots,\nu^4\}$ denotes the corresponding dual basis.
Thus we can assume that $\xi(e_1)$ is given by 
\[\begin{pmatrix}
0&0&0&0\\
0&0&0&0\\
*&*&0&m\\
m&0&0&0
\end{pmatrix}\]
for a suitable basis and a suitable $m\neq 0$.

This motivates to define 
\begin{align}\label{eq:brauch6}
\xi_{i,j}^1=\sigma^1\otimes\alpha^1\otimes(ia_3+a_4)+\sigma^1\otimes\alpha^2\otimes ja_3+\sigma^1\otimes\alpha^4\otimes a_3
\end{align}
for $i,j\in\mathds{R}$.
If $i=j=0$, we simply write $\xi^1$ instead of $\xi^1_{0,0}$.

We obtain, that a given nilpotent cocycle with $\xi(e_1)=0$ on $\mathfrak a/\mathfrak z(\mathfrak a)$ lies in the $G$-orbit of
\[ [\sigma^1\otimes(x_1\alpha^1+x_2\alpha^2+\alpha^3+x_4\alpha^4)\otimes\sigma^1,0,\xi_{i,j}^1] \]
for an $x_1,x_2,x_4,i,j\in\mathds R$.

\begin{lemma}\label{lm:66a}
The cohomology classes of all nilpotent cocycles $(\gamma,0,\xi)$ satisfying $\xi(e_1)=0$ on $\mathfrak a/\mathfrak z(\mathfrak a)$ lie in the $G$-orbit of $[\sigma^1\otimes\alpha^3\otimes\sigma^1,0,\xi^1]$.
\end{lemma}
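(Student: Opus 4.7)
From the remarks immediately preceding the lemma, every such cohomology class has a representative of the form $(\gamma, 0, \xi_{i,j}^1)$ with
\[\gamma = \sigma^1\otimes(x_1\alpha^1 + x_2\alpha^2 + \alpha^3 + x_4\alpha^4)\otimes\sigma^1\]
for some $x_1, x_2, x_4, i, j\in\mathds R$, so the task reduces to showing that any such triple lies in the $G$-orbit of $(\sigma^1\otimes\alpha^3\otimes\sigma^1, 0, \xi^1)$. I plan to do this in two stages.

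\emph{Stage 1 (kill $i$, $j$, $x_2$ by $C^1(\mathfrak l, \mathfrak a)$-equivalence).} Apply (\ref{eq:gex1}) with $\tau = \sigma^1\otimes(j a_1 - i a_2 + \tau_4 a_4)$. Since $\ad_\mathfrak a(\tau(e_1))(A) = (j\alpha^2(A) + i\alpha^1(A))a_3$ by $[a_1,a_2]=a_3$ and centrality of $a_3, a_4$, the rule $\xi \mapsto \xi - \ad_\mathfrak a\circ\tau$ converts $\xi_{i,j}^1$ into $\xi^1$. Next, compute the transformed $\gamma = \gamma + \tilde\tau^*\circ\xi - \tilde\tau^*\circ\ad_\mathfrak a\circ\tau - \beta_0\circ\tau$ piece by piece using $\omega_\mathfrak a = \alpha^1\wedge\alpha^4+\alpha^2\wedge\alpha^3$, and choose $\tau_4$ to cancel the $\alpha^2$-coefficient. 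This reduces the class to one of the form $(\sigma^1\otimes(x_1'\alpha^1 + \alpha^3 + x_4'\alpha^4)\otimes\sigma^1, 0, \xi^1)$ with explicit formulas for $x_1', x_4'$.

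\emph{Stage 2 (kill $x_1'$, $x_4'$ by $G$-action).} The $\omega_\mathfrak a$-preserving automorphism group of $\mathfrak h_3\oplus\mathds R$ is five-dimensional and contains, besides the family $U(x,p,q,k)$, a diagonal scaling $U_\lambda = \diag(\lambda^2, \lambda^{-1}, \lambda, \lambda^{-2})$, together with the $S$-rescaling of $\mathfrak l$. A direct calculation of $(1, U(x,p,q,k))^*$ on $(\gamma, 0, \xi^1)$ shows that the $\alpha^4$-coefficient of $\gamma$ is shifted by $+x$ and the $\alpha^1$-coefficient by $-(p + k x_4' + xk)$; suitable choices of $x, p, k$ therefore kill both $x_1'$ and $x_4'$. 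This action simultaneously perturbs $\xi^1$, producing $\xi_{-k,-p}^1$ plus (when $x\neq 0$) an extra component $-x\sigma^1\otimes\alpha^2\otimes a_4$. Re-applying a Stage 1 type $C^1$-equivalence restores $\xi = \xi^1$; by the same formulas as above, this corrective equivalence only modifies the $\alpha^2$-coefficient of $\gamma$, which can again be cancelled by adjusting $\tau_4$. A final combined $S$ and $U_\lambda$ scaling, subject to $s^2\lambda^{-1}=1$, normalizes the $\alpha^3$-coefficient to $1$, giving the claimed representative.

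The main obstacle is the interplay between the $G$-action and the normal form $\xi^1$: the $U(x,p,q,k)$-action with $x\neq 0$ produces a parasitic $\alpha^2\otimes a_4$ term in $\xi$ lying outside the $\xi_{i,j}^1$ family, so one cannot combine $G$-actions while keeping $\xi = \xi^1$ throughout. The resolution is to verify that the corrective $\tau$-equivalence needed to re-impose $\xi = \xi^1$ touches only the $\alpha^2$-coefficient of $\gamma$ and neither the $\alpha^1$- nor the $\alpha^4$-coefficient, so that it does not resurrect $x_1'$ or $x_4'$; this reduces the entire argument to a careful bookkeeping of the linear and quadratic dependencies in the transformation rule (\ref{eq:gex1}).
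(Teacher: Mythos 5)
Your Stage 1 is fine and agrees with what the paper does implicitly. The problem is Stage 2, and the obstacle you yourself flag is not resolved by your proposed bookkeeping.

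The gap is this: after applying $(1,U(x,p,q,k))$ with $x\neq 0$ to $(\gamma,0,\xi^1)$, the pulled-back $\xi$ acquires the component $-x\,\sigma^1\otimes\alpha^2\otimes a_4$ (your computation of this is correct), and \emph{no} $C^1(\mathfrak l,\mathfrak a)$-equivalence can remove it. Indeed, the action (\ref{eq:gex1}) changes $\xi$ only by $-\ad_{\mathfrak a}\circ\tau$, whose image lies in $\Hom(\mathfrak a,[\mathfrak a,\mathfrak a])=\Hom(\mathfrak a,\Span\{a_3\})$; hence the $a_4$-components of $\xi(e_1)$ are invariants of the equivalence class, and a cocycle whose $\xi(e_1)$ sends $a_2$ to something with nonzero $a_4$-part can never be equivalent to one with $\xi=\xi^1_{i,j}$. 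So the sentence ``re-applying a Stage 1 type $C^1$-equivalence restores $\xi=\xi^1$'' is false precisely in the case you need it, namely $x=-x_4'\neq 0$, and checking that the corrective $\tau$ only touches the $\alpha^2$-coefficient of $\gamma$ is beside the point, because that corrective $\tau$ does not exist.

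The fix, which is how the paper argues, is structurally different: do not try to force both $\xi=\xi^1$ and vanishing $\alpha^1,\alpha^4$-coefficients of $\gamma$ by the $G$-action. Instead, first compute the full $C^1$-equivalence class of the target $(\sigma^1\otimes\alpha^3\otimes\sigma^1,0,\xi^1)$; it consists of all $(\tilde\gamma,0,\xi^1_{\tau_2,-\tau_1})$ with $\tilde\gamma(e_1)a_1=(\tau_1+\tau_2^2)\sigma^1$ and $\tilde\gamma(e_1)a_4=2\tau_2\sigma^1$ (the $\alpha^2$-coefficient being free via $\tau_4$). Then a single shear $U(0,p,0,k)$ --- with $x=0$, so no parasitic $a_4$-term arises --- suffices: it sends $\xi^1_{i,j}$ to $\xi^1_{i-k,j-p}$ and shifts the $\alpha^1$-coefficient of $\gamma$, and the choices $k=i-\tfrac12 x_4$, $p=\tfrac12(x_1+j-x_4k-\tfrac14x_4^2)$ land exactly in the equivalence class above with $\tau_2=\tfrac12x_4$. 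In other words, the nonzero $\alpha^4$-coefficient $x_4'$ is not killed but \emph{absorbed} by the freedom to have $\xi=\xi^1_{\tau_2,-\tau_1}$ with $\tau_2\neq0$ inside the target's equivalence class. As written, your proof does not go through without this change of strategy.
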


\begin{proof}
At first, we have directly from Lemma \ref{lemma:Z2RH3xR0} that $(\sigma^1\otimes\alpha^3\otimes\sigma^1,0,\xi^1)$ is a nilpotent cocycle.

Before giving an isomorphism of pairs which maps the cohomology class of $[\sigma^1\otimes(x_1\alpha^1+x_2\alpha^2+\alpha^3+x_4\alpha^4)\otimes\sigma^1,0,\xi_{i,j}^1]$ to $[\sigma^1\otimes\alpha^3\otimes\sigma^1,0,\xi^1]$, we determine the equivalence class of $(\sigma^1\otimes\alpha^3\otimes\sigma^1,0,\xi^1)$.
For $\tau=\sigma^1\otimes(\tau_1a_1+\dots+\tau_4a_4)$ it holds 
\begin{align}\label{eq:aequivH3xR_1}
\tau^*=(\tau_1\alpha^4+\tau_2\alpha^3-\tau_3\alpha^2-\tau_4\alpha^1)\otimes\sigma^1.
\end{align}
Moreover, we have
\begin{align}
(\ad_{\mathfrak a}\circ\tau)(L)&=(\sigma^1\otimes(\tau_1\alpha^2-\tau_2\alpha^1)\otimes a_3)(L) \label{eq:aequivH3xR_2}
\end{align}
and  
\begin{align}
\tau^*([\tau L,A]_{\mathfrak a})&=((\sigma^1\otimes(\tau_1\tau_2\alpha^2-\tau_2^2\alpha^1)\otimes\sigma^1)(L))(A). \label{eq:aequivH3xR_3}\\
\end{align}
Furthermore,
\begin{align}
\tau^*(\xi(L)A)&=\sigma^1(L)\big((\tau_1\alpha^4+\tau_2\alpha^3-\tau_3\alpha^2-\tau_4\alpha^1)\otimes\sigma^1\big)(\alpha^1(A)a_4+\alpha^4(A)a_3) \label{eq:aequivH3xR_4}\\
&=\sigma^1(L)(\tau_1\alpha^1(A)+\tau_2\alpha^4(A))\sigma^1\nonumber\\
&=\big((\sigma^1\otimes(\tau_1\alpha^1+\tau_2\alpha^4)\otimes\sigma^1)(L)\big)(A)\nonumber
\end{align}
and using $\beta=-(\alpha^2\wedge\alpha^4)\otimes\sigma^1$ yields
\begin{align}\label{eq:aequivH3xR_5}
\beta_0\circ\tau=\sigma^1\otimes(\tau_4 \alpha^2-\tau_2 \alpha^4)\otimes\sigma^1.
\end{align}
Thus, we obtain
\begin{align*}
(\sigma^1\otimes\alpha^3\otimes\sigma^1,0,\xi^1)\tau=
(\sigma^1\otimes((\tau_1+\tau_2^2)\alpha^1-(\tau_1\tau_2+\tau_4)\alpha^2+\alpha^3+2\tau_2\alpha^4)\otimes\sigma^1,0,\xi^1_{\tau_2,-\tau_1}).
\end{align*}
So, the equivalence classes of $(\sigma^1\otimes\alpha^3\otimes\sigma^1,0,\xi^1)$ consists of all cocycles $(\tilde\gamma,0,\xi^1_{\tau_2,-\tau_1})$ satisfying $\tilde\gamma(e_1)a_1=(\tau_1+\tau_2^2)\sigma^1$ and $\tilde\gamma(e_1)a_4=2\tau_2\sigma^1$.
Now, assume \[(\gamma,0,\xi)=(\sigma^1\otimes(x_1\alpha^1+x_2\alpha^2+\alpha^3+x_4\alpha^4)\otimes\sigma^1,0,\xi_{i,j}^1).\]
We set $S=1$ and $U=U(0,p,0,k)$ where  
$k=i-\frac{1}{2}x_4$ and $p=\frac{1}{2}(x_1+j-x_4k-\frac{1}{4}x_4^2)$.
Then
\[(S,U)^*(\gamma,0,\xi)=(\tilde\gamma,0,\xi^1_{\tau_2,-\tau_1})\] where $\tilde\gamma(e_1)a_1=(\tau_1+\tau_2^2)\sigma^1$ and $\tilde\gamma(e_1)a_4=2\tau_2\sigma^1$ for $\tau_2=\frac{1}{2}x_4$ and $\tau_1=\frac{1}{2}(x_1-j-x_4k-\tau_2^2)$.
Finally, we have \[(S,U)^*[\gamma,0,\xi]=[\sigma^1\otimes\alpha^3\otimes\sigma^1,0,\xi^1].\]
\end{proof}

Now, let us assume that $(\gamma,0,\xi)\in Z^2(\mathds R,\mathfrak h_3\oplus\mathds R,\omega_\mathfrak a)_0$ satisfies $\xi(e_1)\neq 0$ on $\mathfrak a/\mathfrak z(\mathfrak a)$ and that $\im(\xi(e_1))$ is orthogonal to $[\mathfrak a,\mathfrak a]$. The set of this cocycles is invariant under equivalence and $G$-action.
Moreover, this case can be devided into two separat cases, namely $\xi(e_1)=0$ on $\mathfrak z(\mathfrak a)$ and $\xi(e_1)\neq 0$ on $\mathfrak z(\mathfrak a)$.
Both cases are also invariant under equivalence and $G$-action.

Because of $\im(\xi(e_1))\bot[\mathfrak a,\mathfrak a]$ the vector $\xi(e_1)a_1$ lies in the center of $\mathfrak a$. Thus $\xi(e_1)a_2\neq 0$ on $\mathfrak a/\mathfrak z(\mathfrak a)$ and moreover $\xi(e_1)a_1\notin[\mathfrak a,\mathfrak a]$ since the cocycle is balanced.
For all $k\in\mathds R$ and $f\neq 0$, the set $\{f^{-2}a_1+ka_4,fa_2,f^{-1}a_3,f^2a_4\}$ is a basis of $\mathfrak a$,
such that the commutator and symplectic form is given by $[f^{-2}a_1+ka_4,fa_2]=f^{-1}a_3$ and $\omega_\mathfrak a=\nu^1\wedge\nu^4+\nu^2\wedge\nu^3$,
where $\{\nu^1,\dots,\nu^4\}$ denotes the corresponding dual basis.
Thus, we can find a basis of $\mathfrak a$ such that
\[\xi(e_1)=\begin{pmatrix}
0 & m & 0 & 0 \\
0 & 0 & 0 & 0 \\
* & * & 0 & * \\
m & 0 & 0 & 0
\end{pmatrix}\]
for a suitable $m\neq 0$.
This shows that the nilpotent $[\gamma,0,\xi]$ lies in the $G$-orbit of
$[\sigma^1\otimes(x_1\alpha^1+x_2\alpha^2+(l+1)\alpha^3+x_4\alpha^4)\otimes\sigma^1,0,\xi^1_{i,j}(l)]$
for a suitable $l\neq -1$ and $x_1,x_2,x_4,i,j\in\mathds R$.
Here $\xi^1_{i,j}(l)$ is defined by
\begin{align}\label{eq:brauch5}
\xi^1_{i,j}(l)=\sigma^1\otimes\alpha^1\otimes(ia_3+a_4)+\sigma^1\otimes\alpha^2\otimes(a_1+ja_3)+\sigma^1\otimes\alpha^4\otimes la_3.
\end{align}
If $i=j=0$ we again omit the indices and simply write $\xi^1(l)$ instead of $\xi^1_{0,0}(l)$.

\begin{lemma}\label{lm:67}
The set of all triples $[\gamma,0,\xi]\in H^2(\mathds R,\mathfrak h_3\oplus\mathds R,\omega_\mathfrak a)_0/G$ satisfying $\xi(e_1)\neq 0$ on $\mathfrak a/\mathfrak z(\mathfrak a)$ and $\im(\xi(e_1))\bot[\mathfrak a,\mathfrak a]$ can be represented by the following cocycles
\begin{align}
(\sigma^1\otimes(l+1)\alpha^3\otimes\sigma^1,0,\xi^1(l)),\quad(\sigma^1\otimes(y_1\alpha^1+\alpha^3)\otimes\sigma^1,0,\xi^1(0)),\quad l\notin\{0,-1\},y_1\in\mathds R
\end{align}
\end{lemma}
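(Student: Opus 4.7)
The plan is to mirror the strategy of Lemma \ref{lm:66a}, starting from the reduction already achieved in the paragraph preceding the statement: every $G$-orbit under consideration meets a cocycle of the form
\[
(\sigma^1\otimes(x_1\alpha^1+x_2\alpha^2+(l+1)\alpha^3+x_4\alpha^4)\otimes\sigma^1,0,\xi^1_{i,j}(l)),\quad l\neq -1.
\]
The first step is verification that the two families listed in the lemma actually lie in $Z^2(\mathds R,\mathfrak h_3\oplus\mathds R,\omega_\mathfrak a)_0$. For the parameters $\kappa_1\in\{\pm 1\}$, $\kappa_2=1$, $b=1$, $c=n=0$, $m=1$, $i=j=0$ of $\xi^1(l)$ one reads off from $\xi^1_{i,j}(l)$ that $x_3=m(l+\kappa_2b^2)=l+1$ (case $l\neq 0$) respectively $x_3=\kappa_2b^2m=1$ (case $l=0$), so both families satisfy the criterion of Lemma \ref{lemma:Z2RH3xR0}.

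The central step is to work out the $\tau$-action on the form above in the spirit of (\ref{eq:aequivH3xR_1})--(\ref{eq:aequivH3xR_5}). For $\tau=\sigma^1\otimes(\tau_1a_1+\cdots+\tau_4a_4)$ I would compute each summand of (\ref{eq:gex1}) directly; the only differences to the derivation in Lemma \ref{lm:66a} come from the extra terms $a_1$ and $la_3$ in $\xi^1_{i,j}(l)a_2$ and $\xi^1_{i,j}(l)a_4$, so the new terms depend only on $l$ and on $\tau_1,\tau_2,\tau_3,\tau_4$ polynomially. The outcome, which I expect to have an explicit shape $(\tilde\gamma,0,\xi^1_{i',j'}(l))$ with $i'=i-\tau_4+l\tau_2$, $j'=j+\tau_1-\tau_3$, shows that $\tau_1$ and $\tau_4$ can be chosen to kill $i$ and $j$; the freedom in $\tau_2,\tau_3$ is then used to eliminate the $\alpha^2$- and $\alpha^4$-coefficients of $\tilde\gamma$ (a check of the Jacobian of the resulting linear system with respect to $\tau_2,\tau_3$ reduces this to showing invertibility of a $2\times 2$ matrix which is nontrivial precisely because $l\neq -1$). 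After this reduction the triple has the form $(\sigma^1\otimes(y_1\alpha^1+(l+1)\alpha^3)\otimes\sigma^1,0,\xi^1(l))$.

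It remains to normalize the residual coefficient $y_1$ using $G$. Rescalings of the type $S=f^{r}$, $U=\operatorname{diag}(f^{-2},f,f^{-1},f^2)$ preserve both the bracket and $\omega_\mathfrak a$ of $\mathfrak h_3\oplus\mathds R$ and act on $(y_1,l+1)$ in degrees that I would compute explicitly; combined with a translation automorphism $U(x,p,q,k)$ chosen to absorb $y_1\alpha^1$ into $(l+1)\alpha^3$, this brings every triple with $l\neq 0$ to the first family with $y_1=0$, whereas for $l=0$ the absorption is impossible and only the leading coefficient of $\alpha^3$ can be fixed to $1$, producing the second family.

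The main obstacle is the final assertion: that the listed representatives lie in pairwise distinct $G$-orbits. Following the invariant-theoretic approach used for the $\mathfrak a=\mathds R^4$ case, I would exhibit $l$ as a well-defined ratio built from $\xi(e_1)$ restricted to $\mathfrak z(\mathfrak a)$ and to $\mathfrak a/\mathfrak z(\mathfrak a)$, which manifestly survives both the $\tau$-action (because any ambiguity is killed modulo $[\mathfrak a,\mathfrak a]$) and the $G$-action (because $S$ and $U$ act with matched homogeneity on numerator and denominator). In the case $l=0$ I would refine this by constructing a second invariant as a dimensionally balanced combination of $\gamma(e_1,\cdot,e_1)$ and $\omega_\mathfrak a(\xi(e_1)^{\bullet}\cdot,\cdot)$ whose value recovers $y_1$; the hard point is verifying that all choices of auxiliary vectors entering the formula disappear after the normalization, so that the invariants depend only on the cohomology class and the pair $(\mathfrak l,\mathfrak a,\omega_\mathfrak a)$.
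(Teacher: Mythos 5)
Your overall skeleton (verify the cocycles via Lemma \ref{lemma:Z2RH3xR0}, compute the $\tau$-action, normalize with $G$, separate orbits by invariants) is the same as the paper's, and your first step and your invariant for $l$ are essentially correct. But the central step contains a concrete error. Under $\tau=\sigma^1\otimes(\tau_1a_1+\cdots+\tau_4a_4)$ the $\xi$-component transforms by $\xi\mapsto\xi-\ad_{\mathfrak a}\circ\tau$, which sends $\xi^1_{i,j}(l)$ to $\xi^1_{i+\tau_2,\,j-\tau_1}(l)$ --- not $i'=i-\tau_4+l\tau_2$, $j'=j+\tau_1-\tau_3$ as you claim. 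Moreover $\tau_3$ is completely inert in this situation: $\xi^1_{i,j}(l)a_3=0$, $a_3$ is central and isotropic against $\im\xi^1_{i,j}(l)$, and $\epsilon$ vanishes identically since $\dim\mathfrak l=1$, so $\tau_3$ drops out of every term of (\ref{eq:gex1}). Consequently your proposed $2\times2$ linear system in $(\tau_2,\tau_3)$ for killing the $\alpha^2$- and $\alpha^4$-coefficients of $\gamma$ is degenerate, and the $\tau$-action alone cannot perform the reduction: once $\tau_1,\tau_2$ are spent normalizing $(i,j)$, the coefficients of $\alpha^1$ and $\alpha^4$ in $\gamma$ are forced to be $\tau_1+\tau_2^2$ and $\tau_2(2l+1)$, with only $\tau_4$ left over. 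The paper closes this gap by bringing in the unipotent symplectic automorphisms $U(x,p,q,k)$ of $\mathfrak h_3\oplus\mathds R$ with the explicit choices (\ref{eq:8})--(\ref{eq:9}); it is the parameter $p=\tfrac{1}{2l}(\cdots)$ there that breaks down at $l=0$ and forces the surviving modulus $y_1$ in the second family. Your attribution of the dichotomy to $l\neq-1$ is also misplaced: $l\neq-1$ is needed only so that $x_3=l+1\neq 0$, i.e.\ so that the cocycle is balanced at all.

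On distinctness, your ratio invariant for $l$ matches the paper's term (\ref{gl:h3xR_l}) and should go through. For separating the $y_1$-family at $l=0$ the paper explicitly abandons the closed-form-invariant route (``the invariant can only be given by a confusing formular'') and instead argues directly with an arbitrary isomorphism of pairs, pinning down $f=\lambda=1$ and then $x=0$ from the normalizations $\omega_{\mathfrak a}(\xi v,\xi^2v)$ and $\omega_{\mathfrak a}(v,[\xi v,v])$ before reading off $y_1$. Your sketch of a ``dimensionally balanced combination'' is plausible but is exactly the part you have not carried out, and it is where the remaining work lies.
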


\begin{proof}
Again using Lemma \ref{lemma:Z2RH3xR0} yields that the given cocycles are nilpotent.
Now, we begin to compute the equivalence class of the cocycle $(\sigma^1\otimes(l+1)\alpha^3\otimes\sigma^1,0,\xi^1(l))$. Assume therefor $\tau=\sigma^1\otimes(\tau_1a_1+\dots+\tau_4a_4)$.
Then 
 \begin{align}
\tau^*(\xi^1(l))&=\sigma^1\otimes(\tau_1\alpha^1-\tau_4\alpha^2+\tau_2l\alpha^4)\otimes\sigma^1\label{eq:tautau1}\\
\beta_0\circ\tau&=\sigma^1\otimes(\tau_4(l+1)\alpha^2-\tau_2(l+1)\alpha^4)\otimes\sigma^1,\label{eq:tautau2}
\end{align} analogous to the equations (\ref{eq:aequivH3xR_4}) and (\ref{eq:aequivH3xR_5}).
Using equations 
 (\ref{eq:aequivH3xR_1})-(\ref{eq:aequivH3xR_3}) and (\ref{eq:tautau1})-(\ref{eq:tautau2})
we obtain
\begin{align*}
(\sigma^1&\otimes(l+1)\alpha^3\otimes\sigma^1,0,\xi^1(l))\tau\\
=&\big( \sigma^1\otimes( (\tau_1+\tau_2^2)\alpha^1-(\tau_4(l+2)+\tau_1\tau_2)\alpha^2+(l+1)\alpha^3+\tau_2(2l+1)\alpha^4 )\otimes\sigma^1,0,\xi^1_{\tau_2,-\tau_1}(l) \big)
\end{align*}
for $l\notin\{0,1\}$.
Completely analogous we have
\begin{align*}
(\sigma^1\otimes(&y_1\alpha^1+\alpha^3)\otimes\sigma^1,0,\xi^1(0))\tau=\\
&\big( \sigma^1\otimes( (y_1+\tau_1+\tau_2^2)\alpha^1-(2\tau_4+\tau_1\tau_2)\alpha^2+\alpha^3+\tau_2\alpha^4 )\otimes\sigma^1,0,\xi^1_{\tau_2,-\tau_1}(0) \big).
\end{align*}
Thus, we also see that
$[\sigma^1\otimes(x_1\alpha^1+x_2\alpha^2+(l+1)\alpha^3+x_4\alpha^4)\otimes\sigma^1,0,\xi^1_{i,j}(l)]$ for $l\notin\{0,-1\}$ lies in the $G$-orbit of $[\sigma^1\otimes(l+1)\alpha^3\otimes\sigma^1,0,\xi^1(l)]$.
Therefor, we set $S=1$ and $U=U(x,p,q,x)$ where
\begin{align}
x&=\frac{1}{2(l+1)^2}(i-x_4)\label{eq:8}\\
p&=\frac{1}{2l}\big(-(i-x)x-(i-lx-x)^2+j-x_4x-x_1-(l+1)x^2\big)\\
q&=\frac{1}{l+1}\big((i-(l+1)x)(ix-x^2-p+pl-j)-x_1x+x_2-x_4p\big).\label{eq:9}
\end{align}
Then
\[(S,U)^*(\gamma,0,\xi)=\big( \sigma^1\otimes( (\tau_1+\tau_2^2)\alpha^1-\tau_1\tau_2\alpha^2+(l+1)\alpha^3+\tau_2(2l+1)\alpha^4 )\otimes\sigma^1,0,\xi^1_{\tau_2,-\tau_1}(l) \big)\]
for
\begin{align}
\tau_1&=(i-x)x-p+pl-j\label{eq:10}\\
\tau_2&=i-(l+1)x.\label{eq:11}
\end{align}
For $l=0$ we set instead $p=0$ and  the parameters $x$ and $q$ as before (equations (\ref{eq:8}) and (\ref{eq:9})). So, we get analogous
\begin{align*}
(S,U)^*(\sigma^1\otimes(x_1\alpha^1+x_2\alpha^2+\alpha^3+x_4\alpha^4)\otimes\sigma^1,0,\xi^1_{i,j}(0))=\\
(\sigma^1\otimes(y_1\alpha^1+\alpha^3)\otimes\sigma^1,0,\xi^1(0))(\tau_1\alpha^1+\tau_2\alpha^2)\otimes\sigma^1
\end{align*}
for $\tau_1$ and $\tau_2$ (see no. (\ref{eq:10}) and (\ref{eq:11})) and $y_1=-(i-x)x-(i-x)^2+j-x_4x+x_1-x^2\in\mathds R$.

As a consequence we obtain that every $[\gamma,0,\xi]\in H^2(\mathfrak l,\mathfrak a,\omega_\mathfrak a)_0$ satisfying $\xi(e_1)\neq 0$ on $\mathfrak a/\mathfrak z(\mathfrak a)$ and $\im(\xi(e_1))\bot[\mathfrak a,\mathfrak a]$ lies in the $G$-orbit of the cohomology class of one of the in the lemma given cocycles.

It remains to show that this cocycle is uniquely determined.
Therefor we choose for $\xi(e_1)\neq 0$ on $\mathfrak z(\mathfrak a)$ a $L\in\mathfrak l$, $L\neq 0$ and a $v\in\mathfrak a$ with $\xi(e_1)^3v\neq 0$ and consider
\begin{align}\label{gl:h3xR_l}
\omega_\mathfrak a(v,\xi(L)^3v)\omega_\mathfrak a(\xi(L)v,\xi(L)^2v)^{-1}.
\end{align}
Obviously, this value is independent from the choice of $L\neq 0$.
It is also independent from the choice of $v$:
Since $\xi^3(e_1)=[\mathfrak{a},\mathfrak{a}]$ is one-dimensional, every $\tilde v\in\mathfrak a$ satifying $\xi^3(e_1)\tilde v\neq 0$ can be written as the sum of a nontrivial multiple of $v$ and an element in the image of $\xi(L)$.
Since the image of $\xi(L)$ is orthogonal to $[\mathfrak a,\mathfrak a]$, we have
\[\omega_\mathfrak a(\tilde v,\xi^3(L)\tilde v)=\mu^2\omega_\mathfrak a(v,\xi^3(L)v)\]
for some $\mu\neq 0$. Moreover, the center is isotropic and it follows that
\[\omega_\mathfrak a(\xi(L)\tilde v,\xi^2(L)\tilde v)=\mu^2\omega_\mathfrak a(\xi(L)v,\xi^2(L)v).\]
Thus the value of the term (\ref{gl:h3xR_l}) is only given by the structure of the Lie algebra and the symplectic form (hence invariant under the $G$-action).
Furthermore, it is invariant under equivalence and defines hence an invariant of the $G$-orbits.
Finally, the term (\ref{gl:h3xR_l}) equals $l$ for $\xi=\xi^1(l)$, $L=e_1$ and $v=a_2$ and we obtain, that $(\sigma^1\otimes(l_1+1)\alpha^3\otimes\sigma^1,0,\xi^1(l_1))$ and $(\sigma^1\otimes(l_2+1)\alpha^3\otimes\sigma^1,0,\xi^1(l_2))$ define isomorphic Lie algebras if and only if $l_1=l_2$.

It remains to show that $[\sigma^1\otimes(x_1\alpha^1+\alpha^3)\otimes\sigma^1,0,\xi^1(0)]$ and $[\sigma^1\otimes(y_1\alpha^1+\alpha^3)\otimes\sigma^1,0,\xi^1(0)]$ lie in the same $G$-orbit if and only if $x_1=y_1$ holds.
Since the invariant can only be given by a confusing formular, we show directly that there is a suitable $(S,U)\in G$ if and only if $x_1=y_1$.
So, let $(S,U)$ be an isomorphism of pairs satisfying 
\[(S,U)^*[\sigma^1\otimes(x_1\alpha^1+\alpha^3)\otimes\sigma^1,0,\xi^1(0)]=[\sigma^1\otimes(y_1\alpha^1+\alpha^3)\otimes\sigma^1,0,\xi^1(0)].\]
This means that $(S,U)^*\xi^1(0)=\xi^1_{\tau_2,-\tau_1}(0)$ and 
\[(S,U)^*\gamma=\sigma^1\otimes((y_1+\tau_1+\tau_2^2)\alpha^1-(2\tau_4+\tau_1\tau_2)\alpha^2+\alpha^3+\tau_2\alpha^4)\otimes\sigma^1=:\tilde\gamma,\]
where $\gamma:=\sigma^1\otimes(x_1\alpha^1+\alpha^3)\otimes\sigma^1$.
In particular, assume $U^{-1}a_2=fa_2+xa_1+z$ for $f,x\in\mathds R$, $z\in\mathfrak z(\mathfrak a)$ and assume $Se_1=\lambda e_1$ for $\lambda\neq 0$.
We consider
\begin{align*}
1&=\omega_\mathfrak a(a_1,a_4)=\omega_\mathfrak a(\xi^1_{\tau_2,-\tau_1}(0)(e_1)a_2,(\xi^1_{\tau_2,-\tau_1}(0)(e_1))^2a_2)\\
&=\omega_\mathfrak a((S,U)^*\xi^1(0)(e_1)a_2,((S,U)^*\xi^1(0)(e_1))^2a_2)\\
&=\omega_\mathfrak a(\xi^1(0)(\lambda e_1)U^{-1}a_2,(\xi^1(0)(\lambda e_1))^2U^{-1}a_2)\\
&=\lambda^3f^2
\end{align*}
and
\begin{align*}
1&=\omega_\mathfrak a(a_2,[\xi^1_{\tau_2,-\tau_1}(0)(e_1)a_2,a_2])\\
&=\omega_\mathfrak a(a_2,[(S,U)^*\xi^1(0)(e_1)a_2,a_2])\\
&=\lambda\omega_\mathfrak a(U^{-1}a_2,[\xi^1(0)(e_1)U^{-1}a_2,U^{-1}a_2])\\
&=\lambda f^3.
\end{align*}
We get $f=\lambda=1$ and thus obtain 
\begin{align*}
0&=\gamma(e_1,(\xi^1(0)(e_1))^2U^{-1}a_2,e_1)\\
&=\tilde\gamma(e_1,(\xi^1_{\tau_2,-\tau_1}(0)(e_1))^2a_2,e_1)\\
&=\tilde\gamma(e_1,\tau_2a_3+a_4,e_1)=2\tau_2\\
&=2\omega_\mathfrak a(a_2,(\xi^1_{\tau_2,-\tau_1}(0)(e_1))^2a_2)\\
&=2\omega_\mathfrak a(U^{-1}a_2,(\xi^1(0)(e_1))^2U^{-1}a_2)\\
&=x.
\end{align*}
Hence 
\begin{align*}
y_1=\tilde\gamma(e_1,\xi^1_{\tau_2,-\tau_1}(0)(e_1)a_2,e_1)=\gamma(e_1,\xi^1(0)(e_1)U^{-1}a_2,e_1)=x_1
\end{align*}
and consequently $[\sigma^1\otimes(x_1\alpha^1+\alpha^3)\otimes\sigma^1,0,\xi^1(0)]$ and $[\sigma^1\otimes(y_1\alpha^1+\alpha^3)\otimes\sigma^1,0,\xi^1(0)]$ lie in the same $G$-orbit, if and only if $x_1=y_1$ holds.
\end{proof}

Now, let us assume that $(\gamma,0,\xi)\in Z^2(\mathds R,\mathfrak h_3\oplus\mathds R,\omega_\mathfrak a)_0$ satisfies that the image of $\xi(e_1)$ is not  orthogonal to $[\mathfrak a,\mathfrak a]$.
This case is (as the previous cases) invariant under equivalence and $G$-action. Using the balanced condition, we get that $\xi(e_1)\neq 0$ on $\mathfrak a/\mathfrak z(\mathfrak a)$ and $\xi(e_1)=0$ on $\mathfrak z(\mathfrak a)$.

Since $\{a_1,a_2+xa_1,a_3,a_4-xa_3\}$, $\{a_1+xa_3,a_2+xa_4,a_3,a_4\}$ and $\{f^{-2}a_1,fa_2,f^{-1}a_3,f^2a_4\}$ for all $x\in\mathds R$ and $f\neq 0$ are bases of $\mathfrak a$, such that the commutator and the symplectic form is given unchanged, we can choose a basis of $\mathfrak a$ and a basis $\{L\}$ of $\mathfrak l$, such that $\xi(L)$ is given by 
\begin{align*}
\begin{pmatrix}
0&0&0&0\\
1&0&0&0\\
i&j&0&0\\
0&n&0&0
\end{pmatrix}
\end{align*}
for a suitable $n\in\{1,0,-1\}$ and $i,j\in\mathds R$ with respect to a suitable basis.
This shows that every nilpotent cocycle, whose image of $\xi(e_1)$ is not orthogonal to $[\mathfrak a,\mathfrak a]$, lies in the $G$-Orbit of 
\[(\sigma^1\otimes(x_1\alpha^1+x_2\alpha^2+j\alpha^3+y_4\alpha^4)\otimes\sigma^1,0,\xi^{1,n}_{i,j})\]
for some $x_1,\dots,x_3,i,j\in\mathds R$, $y_4\neq 0$ and $n\in\{1,0,-1\}$.
Here $\xi^{1,n}_{i,j}$ is defined by
\begin{align}\label{eq:brauch7}
\xi^{1,n}_{i,j}=\sigma^1\otimes\alpha^1\otimes(a_2+ia_3)+\sigma^1\otimes\alpha^2\otimes(ja_3+na_4).
\end{align}
Again, we write simply $\xi^{1,n}$ instead of $\xi^{1,n}_{0,0}$.

\begin{lemma}\label{lm:68}
The cocycles 
\[(\sigma^1\otimes(y_2\alpha^2+y_4\alpha^4)\otimes\sigma^1,0,\xi^{1,\pm 1}),\quad(\sigma^1\otimes(y_2\alpha^2\pm\alpha^4)\otimes\sigma^1,0,\xi^{1,0})\]
where $y_2\geq 0$, $y_4\neq 0$ form a system of representatives of all $[\gamma,0,\xi]\in H^2(\mathds R,\mathfrak h_3\oplus\mathds R,\omega_\mathfrak a)_0/G$, which satisfy that the image of $\xi(e_1)$ is not orthogonal to $[\mathfrak a,\mathfrak a]$.
\end{lemma}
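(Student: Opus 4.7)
The plan is to follow the template of Lemmas \ref{lm:66a} and \ref{lm:67}, proceeding in three stages: verification of nilpotency of the listed cocycles, reduction of the already pre-normalized form to one of them, and distinguishing orbits via $G$-invariants.

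First I would verify that each cocycle in the list lies in $Z^2(\mathds R,\mathfrak h_3\oplus\mathds R,\omega_\mathfrak a)_0$ by matching $\xi^{1,n}$ with the parametrization of Lemma \ref{lemma:Z2RH3xR0}: one reads off $b=0$, $c=1$, $\kappa_2=-1$, $l=0$, $m=0$, $i=j=0$, so the second alternative of that lemma applies with $x_3=0$ and the only remaining requirement $\kappa_1 b\, x_3+\kappa_2 c\, x_4\neq 0$ becomes $y_4\neq 0$ (respectively $\pm 1\neq 0$), which is built into the list.

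Second, for surjectivity, I start from the pre-normalized form
\[(\sigma^1\otimes(x_1\alpha^1+x_2\alpha^2+j\alpha^3+y_4\alpha^4)\otimes\sigma^1,\,0,\,\xi^{1,n}_{i,j})\]
obtained just before the lemma statement. I apply the group action of a general $\tau=\sigma^1\otimes(\tau_1 a_1+\cdots+\tau_4 a_4)$ and compute the five building blocks $\tau^*$, $\tilde\tau^*\circ\xi^{1,n}_{i,j}$, $\tilde\tau^*\circ\ad_\mathfrak a\circ\tau$, $\beta_0\circ\tau$ and $\tau^*\alpha=0$, exactly as in formulas (\ref{eq:aequivH3xR_1})--(\ref{eq:aequivH3xR_5}). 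A direct calculation shows that $\xi^{1,n}_{i,j}$ transforms into $\xi^{1,n}_{i+\tau_2,\,j-\tau_1}$, so $\tau_1=j$ and $\tau_2=-i$ eliminate the subscripts; the remaining two free parameters $\tau_3,\tau_4$ can then be tuned to kill the $\alpha^1$ coefficient and the $\alpha^3$ coefficient of $\gamma$. What survives is a cocycle of the form $(\sigma^1\otimes(x_2'\alpha^2+y_4'\alpha^4)\otimes\sigma^1,\,0,\,\xi^{1,n})$. Finally I apply rescalings $(S,U)=(\lambda,\diag(f^{-2},f,f^{-1},f^2))\in G$ (known to preserve the symplectic Lie algebra structure of $\mathfrak h_3\oplus\mathds R$): these multiply $(x_2',y_4')$ by explicit powers of $f$ and $\lambda$. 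For $n=\pm 1$ the requirement $(S,U)^*\xi^{1,n}=\xi^{1,n}$ locks a one-parameter subgroup of the scalings which is enough to normalize $x_2'\geq 0$ but not $y_4'$; for $n=0$ an extra degree of freedom survives, allowing normalization $y_4'\in\{+1,-1\}$ as well.

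Third, for injectivity I would construct intrinsic $G$-invariants distinguishing the representatives. The condition $\xi(e_1)^2\equiv 0$ on $\mathfrak a$ separates the class $n=0$ from $n=\pm 1$; for $n\neq 0$, the sign of $n$ is read off from $\sign\,\omega_\mathfrak a(v,\xi(L)^2 v)$ for any $v\in\mathfrak a$ with $\xi(L)^2 v\neq 0$ (independence of $v$ follows as in Lemma \ref{lm:67} from the orthogonality statements there). The value $|y_2|$ is recovered from $\gamma$ evaluated on the canonical vector $\xi(L)a_1$ modulo an intrinsic scale, and in the $n=0$ case the sign of $y_4$ is detected by a ratio of $\gamma$-values on $\xi(L)$-inverse images, in the spirit of (\ref{eq:KappaNeqIsom}) and (\ref{gl:h3xR_l}).

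The main obstacle will be organizing the $\tau$-action bookkeeping for the $\gamma$-component, since $\xi^{1,n}_{i,j}$ contains both the $\alpha^1\otimes(a_2+ia_3)$ summand (producing nontrivial $\tilde\tau^*\circ\xi$ contributions along $\alpha^1$) and the $n\,\alpha^2\otimes a_4$ summand (producing nontrivial $\beta_0\circ\tau$ contributions along $\alpha^4$). A second delicate point is producing a clean $G$-invariant that separates the sign of $y_4$ in the $n=0$ case, where the $\xi$-structure provides fewer natural vectors than in Lemma \ref{lm:67}; this likely requires combining a $\gamma$-value with an $\omega_\mathfrak a$-pairing involving the Lie bracket on $\mathfrak a$, so that only the residual scaling freedom acts trivially on the chosen expression.
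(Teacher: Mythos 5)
Your overall strategy (normalize with the $\tau$-action and rescalings, then separate orbits by invariants) is the same as the paper's, and your surjectivity argument is essentially correct up to bookkeeping: note that for $\xi=\xi^{1,n}_{i,j}$ the parameter $\tau_4$ acts trivially on $\gamma$ (only $\tau_1,\tau_2,\tau_3$ appear in the transformed coefficients), and the $\alpha^3$-coefficient of $\gamma$ is controlled by $\tau_1$ -- the same parameter you spend on normalizing the subscript $j$ of $\xi$. This double duty only works because the cocycle condition of Lemma \ref{lemma:Z2RH3xR0} couples the $\alpha^3$-coefficient to $j$; you should say so. Also, the normalization $y_2\geq 0$ is achieved by the discrete element $(-1,\diag(1,-1,-1,1))\in G$ rather than by a continuous scaling subgroup.

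The genuine gap is in the injectivity step. Your proposed invariants are not well defined on $G$-orbits of cohomology classes. First, ``$\xi(e_1)^2\equiv 0$'' is not invariant under equivalence: the $\tau$-action replaces $\xi^{1,0}$ by $\xi^{1,0}_{\tau_2,-\tau_1}$, and $\xi^{1,0}_{\tau_2,-\tau_1}(e_1)^2a_1=-\tau_1a_3\neq 0$ for $\tau_1\neq 0$. Second, $\sign\omega_\mathfrak a(v,\xi(e_1)^2v)$ genuinely depends on $v$: for $\xi^{1,n}_{i,j}$ and $v=\sum v_ka_k$ one computes $\omega_\mathfrak a(v,\xi(e_1)^2v)=nv_1^2+jv_1v_2$, which changes sign as $v_2$ varies whenever $j\neq 0$. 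Your appeal to ``the orthogonality statements of Lemma \ref{lm:67}'' fails precisely here, because that lemma assumed $\im\xi(e_1)\bot[\mathfrak a,\mathfrak a]$, whereas the present case is by definition the one where that orthogonality does not hold. The paper avoids this by anchoring the invariant at the distinguished vector $a_1$ (well defined modulo $\mathfrak z(\mathfrak a)$ and a scalar, since $\omega_\mathfrak a(U^{-1}a_1,U^{-1}a_3)=0$ forces $U^{-1}a_1=fa_1+z$ with $z$ central) and then pinning down $f$ and $\lambda$ through the two normalizations $\omega_\mathfrak a(a_1,\xi(e_1)^2a_1)$ and $\omega_\mathfrak a(\xi(e_1)a_1,[a_1,\xi(e_1)a_1])$. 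Third, $\gamma(e_1)(\xi(e_1)a_1)$ alone does not recover $y_2$: under the $\tau$-action the $\alpha^2$-coefficient shifts by $2\tau_1n-\tau_1\tau_2$, so one must add the correction term $2n\,\omega_\mathfrak a(\xi(e_1)a_1,\xi(e_1)^2a_1)$ to obtain a class function (this is exactly the combination appearing in the paper's final display). Finally, your $y_4$-sign detector for $n=0$ is only sketched; the paper instead derives $\tilde y_4=\lambda^2f^{-1}y_4$ with $\lambda^2=f^{-3}>0$, so the sign of $y_4$ is preserved automatically and no separate invariant needs to be invented. Without these repairs the orbit-separation half of the lemma is not established.
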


\begin{proof}
Using Lemma \ref{lemma:Z2RH3xR0} yields directly that the given cocycles are nilpotent.

Now, we determine the equivalence classes of $(\sigma^1\otimes(y_2\alpha^2+y_4\alpha^4)\otimes\sigma^1,0,\xi^{1,n})$ for $n\in\{1,0,-1\}$.
Therefor, we use equations (\ref{eq:aequivH3xR_1})-(\ref{eq:aequivH3xR_3}) and
\begin{align*}
\tau^*(\xi(L)A)&=(\sigma^1\otimes(-\tau_3\alpha^1+\tau_1n\alpha^2)\otimes\sigma^1)(L)A\\
\beta_0(\tau L)A&=(\sigma^1\otimes((\tau_2n+\tau_3)\alpha^1-\tau_1n\alpha^2-\tau_1\alpha^3)\otimes\sigma^1)(L)A
\end{align*}
and obtain
\begin{align*}
(\sigma^1&\otimes(y_2\alpha^2+y_4\alpha^4)\otimes\sigma^1,0,\xi^{1,n})\tau=\\
&(\sigma^1\otimes((-2\tau_3+\tau_2^2-\tau_2n)\alpha^1+(y_2+2\tau_1n-\tau_1\tau_2)\alpha^2+\tau_1\alpha^3+y_4\alpha^4)\otimes\sigma^1,0,\xi^{1,n}_{\tau_2,-\tau_1}).
\end{align*}
Thus
\[[\sigma^1\otimes(x_1\alpha^1+x_2\alpha^2+j\alpha^3+y_4\alpha^4)\otimes\sigma^1,0,\xi^{1,n}_{i,j}]=[\sigma^1\otimes(y_2\alpha^2+y_4\alpha^4)\otimes\sigma^1,0,\xi^{1,n}],\]
since $x_2$ can be written as $x_2=y_2-2jn+ij$.
Moreover, we have 
\begin{align*}
(-1,\diag(1,-1,-1,1))^*(\sigma^1\otimes(y_2\alpha^2+y_4\alpha^4)\otimes\sigma^1,0,\xi^{1,n})=
(\sigma^1\otimes(-y_2\alpha^2+y_4\alpha^4)\otimes\sigma^1,0,\xi^{1,n}).
\end{align*}
Hence the cohomology classes 
$[\sigma^1\otimes(x_1\alpha^1+x_2\alpha^2+j\alpha^3+y_4\alpha^4)\otimes\sigma^1,0,\xi^{1,n}_{i,j}]$ 
and 
$[\sigma^1\otimes(|y_2|\alpha^2+y_4\alpha^4)\otimes\sigma^1,0,\xi^{1,n}]$ lie in the same $G$-orbit.
If in addition $n=0$, we set $S=f^{-3}$ and $U=\diag(f^{-2},f,f^{-1},f^2)$ where $f=\sqrt[8]{|y_4|}$.
Then 
\begin{align*}
(S,U)^*(\sigma^1\otimes(|y_2|\alpha^2+y_4\alpha^4)\otimes\sigma^1,0,\xi^{1,0})=
(\sigma^1\otimes(f^{-7}|y_2|\alpha^2+f^{-8}y_4\alpha^4)\otimes\sigma^1,0,\xi^{1,0}).
\end{align*}

At this point, we have shown that  every nilpotent cocycle, whose image of $\xi(e_1)$ is not orthogonal to $[\mathfrak a,\mathfrak a]$, lies in the $G$-orbit of one of the equivalence classes given in the lemma. Moreover, this cocycle is unique:
Therefor, let $(S,U)$ be an isomorphism of pairs satisfying \[(S,U)^*[\gamma,0,\xi^{1,n}]=[\sigma^1\otimes(\tilde y_2\alpha^2+\tilde y_4\alpha^4)\otimes\sigma^1,0,\xi^{1,\tilde n}]\] for the parameters given in the lemma, where $\gamma=\sigma^1\otimes(y_2\alpha^2+y_4\alpha^4)\otimes\sigma^1$ holds.
This means that $(S,U)^*\xi^{1,n}=\xi^{1,\tilde n}_{\tau_2,-\tau_1}$ and 
\[(S,U)^*\gamma=\sigma^1\otimes(z_1\alpha^1+(\tilde y_2+2\tau_1n-\tau_1\tau_2)\alpha^2+\tau_1\alpha^3+\tilde y_4\alpha^4)\otimes\sigma^1=:\tilde\gamma\]
for some $z_1,\tau_1,\tau_2\in\mathds{R}$.
Because of 
$0=\omega_\mathfrak a(a_1,a_3)=\omega_\mathfrak a(U^{-1}a_1,U^{-1}a_3)$ we have  $U^{-1}a_1=fa_1+z$ for some $f\neq 0$ and $z\in\mathfrak z(\mathfrak a)$.
Assume $Se_1=\lambda e_1$ for some $\lambda\neq 0$.
It holds
\begin{align*}
\tilde n=\omega_\mathfrak a(a_1,(\xi^{1,\tilde n}_{\tau_2,-\tau_1}(e_1))^2a_1)=\lambda^2\omega_\mathfrak a(U^{-1}a_1,(\xi^{1,n}(e_1))^2U^{-1}a_1)=\lambda^2f^2n.
\end{align*}
Since $n,\tilde n\in\{1,0,-1\}$, we have especially $n=\tilde n$.
(Here we mention that the computation shows that $\sign(\omega_\mathfrak a(a_1,(\xi(e_1))^2a_1))\in\{1,0,-1\}$ is an invariant of the $G$-orbits.)
Moreover, we have 
\begin{align*}
1&=\omega_\mathfrak a(\xi^{1,n}_{\tau_2,-\tau_1}(e_1)a_1,[a_1,\xi^{1,n}_{\tau_2,-\tau_1}(e_1)a_1])\\
&=\omega_\mathfrak a(\xi^{1,n}(e_1)U^{-1}a_1,[U^{-1}a_1,\xi^{1,n}(e_1)U^{-1}a_1])=\lambda^2f^3.
\end{align*}
Thus for $n=\tilde n=\pm 1$ it follows that $f=\lambda=1$ and for $n=0$ we obtain $\lambda^2=f^{-3}>0$.
Because of $1=\omega_\mathfrak a(a_1,a_4)=\omega_\mathfrak a(U^{-1}a_1,U^{-1}a_4)$ we get $U^{-1}a_4=f^{-1}a_4+\tilde z$ for $\tilde z\in[\mathfrak a,\mathfrak a]$ and we obtain 
\begin{align*}
\tilde y_4=\tilde\gamma(e_1,a_4,e_1)=((S,U)^*\gamma)(e_1,a_4,e_1)=\lambda^2\gamma(e_1,U^{-1}a_4,e_1)=\lambda^2f^{-1}y_4.
\end{align*}
Thus for $n=\tilde n=\pm 1$ the parameter $y_4=\tilde y_4\neq 0$. For $n=0$ and $y_4,\tilde y_4\in\{1,-1\}$ we also get $y_4=\tilde y_4$. Especially it also follows for $n=0$ that $f=\lambda=1$ and hence
\begin{align*}
y_2&=\gamma(e_1,\xi^{1,n}(e_1)U^{-1}a_1,e_1)+2n\omega_\mathfrak a(\xi^{1,n}(e_1)U^{-1}a_1,(\xi^{1,n}(e_1))^2U^{-1}a_1)\\
&=\tilde\gamma(e_1,\xi^{1,n}_{\tau_2,-\tau_1}(e_1)a_1,e_1)+2n\omega_\mathfrak a(\xi^{1,n}_{\tau_2,-\tau_1}(e_1)a_1,(\xi^{1,n}_{\tau_2,-\tau_1}(e_1))^2a_1)=\tilde y_2.
\end{align*}
\end{proof}

Now, we can summerize Lemma \ref{lm:66a} - \ref{lm:68} as follows:
\begin{lemma}
The cocycles
\begin{align*}
&(\sigma^1\otimes\alpha^3\otimes\sigma^1,0,\xi^1)\\
&(\sigma^1\otimes(l+1)\alpha^3\otimes\sigma^1,0,\xi^1(l)),\quad l\notin\{0,-1\}\\
&(\sigma^1\otimes(y_1\alpha^1+\alpha^3)\otimes\sigma^1,0,\xi^1(0)),\quad y_1\in\mathds R\\
&(\sigma^1\otimes(y_2\alpha^2+y_4\alpha^4)\otimes\sigma^1,0,\xi^{1,\pm 1})\\
&(\sigma^1\otimes(y_2\alpha^2\pm\alpha^4)\otimes\sigma^1,0,\xi^{1,0}),\quad y_2\geq 0,y_4\neq 0
\end{align*}
form a system of representatives of $H^2(\mathds R,\mathfrak h_3\oplus\mathds R,\alpha^1\wedge\alpha^4+\alpha^2\wedge\alpha^3)_0/G$.
\end{lemma}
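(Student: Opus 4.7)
The plan is to observe that this lemma is a consolidation of Lemmas \ref{lm:66a}, \ref{lm:67}, and \ref{lm:68}, so the main task is to partition $H^2(\mathds R,\mathfrak h_3\oplus\mathds R,\omega_\mathfrak a)_0/G$ into three $G$-invariant subsets that those lemmas handle, and then argue that the representatives they produce are mutually distinct (do not collapse under $G$ across cases). Accordingly, for a nilpotent cocycle $(\gamma,0,\xi)$ I would distinguish the following three structural conditions on $\xi(e_1)$:
\begin{itemize}
\item[(A)] $\xi(e_1)=0$ on $\mathfrak a/\mathfrak z(\mathfrak a)$;
\item[(B)] $\xi(e_1)\neq 0$ on $\mathfrak a/\mathfrak z(\mathfrak a)$ and $\im\xi(e_1)\perp [\mathfrak a,\mathfrak a]$;
\item[(C)] $\im\xi(e_1)$ is not orthogonal to $[\mathfrak a,\mathfrak a]$.
\end{itemize}

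First, I would check that each of (A), (B), (C) depends only on the isomorphism class of the symplectic Lie algebra $\mathfrak d_{\gamma,0,\xi}(\mathfrak l,\mathfrak a)$ together with its canonical isotropic ideal, so the three conditions descend to $H^2(\mathds R,\mathfrak h_3\oplus\mathds R,\omega_\mathfrak a)_0/G$. Invariance under $G$ is immediate from the definition of the pull-back $(S,U)^*$, since an isomorphism $U$ of $(\mathfrak a,\omega_\mathfrak a)$ preserves $\mathfrak z(\mathfrak a)$, $[\mathfrak a,\mathfrak a]$ and the symplectic orthogonality, and a rescaling of $\mathfrak l$ rescales $\xi(e_1)$. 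Invariance under equivalence follows by the formula $\xi'=\xi-\ad_\mathfrak a\circ\tau$: any modification by an inner derivation of $\mathfrak a$ preserves the image of $\xi$ modulo $[\mathfrak a,\mathfrak a]$, and since $[\mathfrak a,\mathfrak a]=\mathfrak z(\mathfrak a)\cap[\mathfrak a,\mathfrak a]$ here and is isotropic, both the condition ``$\xi(e_1)$ vanishes on $\mathfrak a/\mathfrak z(\mathfrak a)$'' and the condition ``$\im\xi(e_1)\perp[\mathfrak a,\mathfrak a]$'' are stable under the $C^1(\mathfrak l,\mathfrak a)$-action.

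Second, I would argue that (A), (B), (C) are exhaustive and pairwise disjoint. Disjointness is tautological from their statements. Exhaustivity reduces to showing that $\xi(e_1)=0$ is excluded: if $\xi=0$, then $\beta=0$ and condition (b) of Definition~\ref{SLA:def:balanced} forces $\gamma=0$, which contradicts condition (a). So every balanced nilpotent cocycle falls into exactly one of (A), (B), (C). Applying Lemma \ref{lm:66a} in case (A), Lemma \ref{lm:67} in case (B), and Lemma \ref{lm:68} in case (C), we obtain that every class in $H^2(\mathds R,\mathfrak h_3\oplus\mathds R,\omega_\mathfrak a)_0/G$ is represented by exactly one element in the union of the five listed families.

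The main obstacle is the cross-case distinctness: I must verify that no representative listed for one case is $G$-equivalent to a representative listed for a different case. This follows because the three defining conditions above are $G$-invariants of the orbit, and direct inspection shows $\xi^1$ satisfies (A), the derivations $\xi^1(l)$ with $l\neq -1$ satisfy (B), and the derivations $\xi^{1,n}$ for $n\in\{-1,0,1\}$ satisfy (C); in particular $\xi^1(l)(e_1)$ annihilates $a_2$ modulo $\mathfrak z(\mathfrak a)$ only when $l=\ldots$, etc. Intra-case distinctness is already established in the three preceding lemmas by the invariants (\ref{gl:h3xR_l}), the explicit computation with $f=\lambda=1$, and the sign of $\omega_\mathfrak a(a_1,\xi(e_1)^2 a_1)$. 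Combining these facts yields the claimed system of representatives.
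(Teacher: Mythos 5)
Your decomposition into cases (A)--(C) is exactly the paper's: the text preceding each of Lemmas \ref{lm:66a}, \ref{lm:67} and \ref{lm:68} introduces precisely these three conditions on $\xi(e_1)$, observes that each is invariant under equivalence and under the $G$-action, and the summary lemma is then obtained by concatenation, just as you argue. The only inaccuracy is your aside that for $\xi=0$ condition (b) ``forces $\gamma=0$'': here $\mathfrak z(\mathfrak a)$ is a two-dimensional isotropic subspace and $\gamma_0:\mathfrak a\rightarrow\Hom(\mathfrak l,\mathfrak l^*)\cong\mathds R$ has rank at most one, so $\mathfrak z(\mathfrak a)\cap\ker\gamma_0$ is always a nonzero isotropic (hence degenerate) subspace and (b) fails for every $\gamma$ --- but your conclusion that $\xi=0$ cannot occur, and the lemma itself, are unaffected.
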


\subsection{Case $\mathfrak l=\mathds R^2$, $\mathfrak a=\mathds R^2$}\label{NSLA:5}

Let $\{e_1,e_2\}$ denote the standard basis of $\mathfrak l=\mathds R^2$, $\{\sigma^1,\sigma^2\}$ its dual basis of $\mathfrak l^*$ and $\{a_1,a_2\}$ the standard basis of $\mathfrak a=\mathds R^2$ and $\omega_\mathfrak a=\alpha^1\wedge\alpha^2$, where $\{\alpha^1,\alpha^2\}$ denotes the corresponding dual basis of $a^*$.

At first, we consider cocycles with $\xi=0$.
For every linear $\gamma:\mathfrak l\rightarrow \Hom(\mathfrak a,\mathfrak l^*)$ and every $\epsilon\in C^2(\mathfrak l,\mathfrak l^*)$ the triple $(\gamma,\epsilon,0)$ satisfies the conditions (\ref{eq:lemma1}) - (\ref{eq:lemma5}), since $\beta=0$, $\mathfrak a=\mathds R^2$ is abelian and $\mathfrak l=\mathds R^2$ is two-dimensional.
Thus $(\gamma,\epsilon,0)\in Z^2(\mathfrak l,\mathfrak a,\omega_\mathfrak a)$.
\begin{lemma}
A cocycle $(\gamma,\epsilon,0)\in Z^2(\mathfrak l,\mathfrak a,\omega_\mathfrak a)$ is a nilpotent one if and only if 
\begin{itemize}
\item $\gamma=0$ and $\epsilon\neq 0$ or
\item $\ker\gamma_0=\{0\}$.
\end{itemize}
\end{lemma}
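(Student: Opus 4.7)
The plan is to verify balancedness directly. Since $\xi=0$, the nilpotency condition $\xi(L_1)\cdots\xi(L_n)=0$ is automatic, so $(\gamma,\epsilon,0)$ is a nilpotent cocycle iff it is balanced, iff it satisfies conditions (a) and (b) of Definition~\ref{SLA:def:balanced}. Under $\xi=0$ and $\mathfrak{a}=\mathds{R}^2$ abelian, the relevant ingredients collapse substantially: $\beta=0$ (so $\beta_0=0$), $\operatorname{ad}_\mathfrak{a}=0$, $\xi_0=0$, $\mathfrak a^\mathfrak{l}_\xi=\mathfrak{a}$, $\mathfrak z(\mathfrak a)=\mathfrak a$, and $\alpha(L_1,L_2)=(\gamma(L_1))^*L_2-(\gamma(L_2))^*L_1$. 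All cocycle conditions (\ref{eq:lemma1})--(\ref{eq:lemma5}) become vacuous or trivially satisfied (note $C^3(\mathds{R}^2,\cdot)=0$), so $(\gamma,\epsilon,0)$ automatically lies in $Z^2(\mathfrak{l},\mathfrak{a},\omega_\mathfrak{a})$.

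With these simplifications, condition (b) reduces to non-degeneracy of $\ker\gamma_0\subseteq\mathfrak{a}$ for $\omega_\mathfrak{a}$. Since $\omega_\mathfrak{a}$ is a non-degenerate two-form on a two-dimensional space, its only non-degenerate subspaces are $\{0\}$ and $\mathfrak{a}$ itself. Hence (b) holds iff $\ker\gamma_0=\{0\}$ (equivalently, $\gamma_0$ is injective) or $\ker\gamma_0=\mathfrak{a}$ (equivalently, $\gamma=0$). These two cases are disjoint and match the dichotomy in the lemma. Unwinding condition (a) gives $\mathds{L}=\{L\in\mathfrak{l}\mid \gamma(L)=0,\ \alpha(L,\cdot)=0,\ \epsilon(L,\cdot)\in\operatorname{im}\gamma_0\}$, and condition (a) reads $\mathds{L}=\{0\}$. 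In the case $\gamma=0$ this collapses to $\epsilon(L,\cdot)=0$, which a short computation in the basis $\{e_1,e_2\}$ shows forces $L=0$ precisely when $\epsilon\neq 0$, since $\epsilon$ is determined by the single covector $\epsilon(e_1,e_2)\in\mathfrak{l}^*$.

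In the case $\ker\gamma_0=\{0\}$ I claim condition (a) is automatic. Writing $g(L,A,M):=(\gamma(L)A)(M)$, the conditions $\gamma(L)=0$ and $\alpha(L,\cdot)=0$ combine, via $\omega_\mathfrak{a}((\gamma(M))^*L,A)=g(M,A,L)$, to give $g(L,A,M)=g(M,A,L)=0$ for all $A\in\mathfrak{a}$ and $M\in\mathfrak{l}$. If $L\neq 0$, extend to a basis $\{L,L'\}$ of $\mathfrak{l}$; then the bilinear form $\gamma_0(A)$ on $\mathfrak{l}\times\mathfrak{l}$ vanishes whenever one argument is $L$, so $\gamma_0(A)$ is supported on $(L')^*\otimes(L')^*$ and $\operatorname{im}\gamma_0$ is at most one-dimensional, contradicting injectivity of $\gamma_0:\mathfrak{a}\to\operatorname{Hom}(\mathfrak{l},\mathfrak{l}^*)$ since $\dim\mathfrak{a}=2$. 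Hence $\mathds{L}\subseteq\{0\}$, and since $0\in\mathds{L}$ always (take $A=0$), we get $\mathds{L}=\{0\}$ irrespective of $\epsilon$. The main obstacle is exactly this last dimension-count step: the two conditions $\gamma(L)=0$ and $\alpha(L,\cdot)=0$ are logically independent in general, and it is only through the interaction with $\dim\mathfrak{a}=\dim\mathfrak{l}=2$ that injectivity of $\gamma_0$ forces $L=0$. The remainder of the argument is routine unwinding of definitions in a setting where many cocycle conditions become vacuous thanks to the low dimension of $\mathfrak{l}$ and abelianness of $\mathfrak{a}$.
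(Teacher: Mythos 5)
Your proof is correct and follows essentially the same route as the paper: reduce nilpotency to balancedness (since $\xi=0$ makes condition (\ref{eq:Nilpotenz}) vacuous), read condition (b) off the fact that the only non-degenerate subspaces of $(\mathds R^2,\omega_{\mathfrak a})$ are $\{0\}$ and $\mathfrak a$ itself, and then verify condition (a) separately in the two resulting cases. The one place you diverge is cosmetic: where the paper checks (a) for injective $\gamma_0$ by producing an $\tilde L_0$ for which $A\mapsto\gamma(\tilde L_0)A$ is surjective onto $\mathfrak l^*$, you instead argue by contradiction that a nonzero $L\in\mathds L$ would force every $\gamma_0(A)$ to vanish whenever one slot is $L$, squeezing $\operatorname{im}\gamma_0$ into a one-dimensional subspace of $\Hom(\mathfrak l,\mathfrak l^*)$ and contradicting $\ker\gamma_0=\{0\}$ --- both arguments hinge on $\dim\mathfrak a=\dim\mathfrak l=2$ in the same way.
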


\begin{proof}
Because of \[\mathfrak z(\mathfrak a)\cap\ker\beta_0\cap\mathfrak a^\mathfrak l_\xi\cap\ker\gamma_0=\ker\gamma_0\]
condition (b) from page \pageref{eq:aundb} is satisfied if and only if $\gamma=0$ or $\ker\gamma_0=\{0\}$.
If $\gamma=0$, then condition (a) is equivalent to  $\epsilon\neq 0$.
Now, assume $\ker\gamma_0=\{0\}$. We show that condition (a) holds:
At first, we consider
$\ker\gamma_0(a_1)\cap\ker\gamma_0(a_2)=\{0\}$. Then (a.2) implies $L=0$ and (a) holds.
Now, assume $\ker\gamma_0(a_1)\cap\ker\gamma_0(a_2)\neq\{0\}$.
Then there is a $L_0\in\ker\gamma_0(a_1)\cap\ker\gamma_0(a_2)$ with $L_0\neq 0$. Using $\ker\gamma_0=\{0\}$ yields the existence of a $\tilde L_0\neq 0$, which satisfies $\gamma_0(A)\tilde L_0\neq 0$ for all $A\neq 0$.
Moreover, $\{L_0, \tilde L_0\}$ form a basis of $\mathfrak a$ and 
\begin{align}\label{eq:n1}
\{\gamma(\tilde L_0)A\mid A\in\mathfrak a\}=\mathfrak l^*
\end{align}
holds.
Now, suppose $L\in\mathfrak l$ and assume that $\gamma(L)=0$ (a.2) and $\alpha(L,\cdot)=0$ (a.3) hold.
Since $\alpha(L,\tilde L_0)=0$ is satisfied if and only if 
\begin{align*}
0&=\omega_\mathfrak a((\gamma(L))^*(\tilde L_0),A)-\omega_\mathfrak a((\gamma(\tilde L_0))^*(L),A)\\
&=\gamma(L,A,\tilde L_0)-\gamma(\tilde L_0,A,L)\\
&=-\gamma(\tilde L_0,A,L)
\end{align*}
for all $A\in\mathfrak a$, it follows $L=0$ by using  no. (\ref{eq:n1}). So, condition (a) holds for all $\ker\gamma_0=\{0\}$ and the assertion of the lemma follows.
\end{proof}

For $(0,\epsilon,0)\in Z^2(\mathfrak l,\mathfrak a,\omega_\mathfrak{a})_0$ we can easily find a basis $\{L_1,L_2\}$ of $\mathfrak l$ with $\epsilon(L_1,L_2)=Z_1$, where $\{Z_1,Z_2\}$ denotes the corresponding dual basis of $\mathfrak l^*$ for $\{L_1,L_2\}$. Thus $[0,\epsilon,0]$ lies in the $G$-orbit of $[0,\sigma^1\wedge\sigma^2\otimes\sigma^1,0]$.
\\\quad\\
Now, assume in the following that $(\gamma,\epsilon,0)\in Z^2(\mathfrak l,\mathfrak a,\omega_\mathfrak a)_0$ satisfies $\ker\gamma_0=\{0\}$.
Two cocycles $(\gamma,\epsilon,0)$ and $(\gamma',\epsilon',0)$ are equivalent if and only if $\gamma=\gamma'$ and 
\begin{align}\label{eq:epsequiv}
\epsilon'=\epsilon+\tau^*\alpha-\ev(\gamma\wedge\tau)
\end{align}
for some $\tau\in C^1(\mathfrak l,\mathfrak a)$.
Because of $\tau^*\alpha(e_1,e_2)=-(\gamma(e_1)\tau\cdot)(e_2)+(\gamma(e_2)\tau\cdot)(e_1)$ equation (\ref{eq:epsequiv}) is equivalent to
\begin{align}
\epsilon'(e_1,e_2)(e_1)&=\epsilon(e_1,e_2)(e_1)-\gamma(e_1,\tau(e_1),e_2)+2\gamma(e_2,\tau(e_1),e_1)-\gamma(e_1,\tau(e_2),e_1)\label{eq:12}\\
\epsilon'(e_1,e_2)(e_2)&=\epsilon(e_1,e_2)(e_2)-2\gamma(e_1,\tau(e_2),e_2)+\gamma(e_2,\tau(e_2),e_1)+\gamma(e_2,\tau(e_1),e_2).\label{eq:13}
\end{align}
For every $A\in\mathfrak a$ we can consider $\gamma_0(A)$ as a bilinear form on $\mathds R^2$.
The space of skewsymmetric maps from $\mathds R^2\times\mathds R^2$ to $\mathds R$ is one-dimensional.
Thus, there is an $A\in\mathfrak a$, $A\neq 0$ such that $\gamma_0(A)$ is symmetric as a bilinear form on $\mathds R^2$.

At first, let us consider that there is a $\tilde A\in\mathfrak a$ such that $\gamma_0(\tilde A)$ is not symmetric.
So, there are bases $\{A_1,A_2\}$ of $\mathfrak a$ and $\{L_1,L_2\}$ of $\mathfrak l$ such that $\gamma_0(A_1)=Z_1\otimes Z_1+\kappa Z_2\otimes Z_2$ for exactly one $\kappa\in\{1,0,-1\}$ and $\gamma_0(A_2)$ is not symmetric.
Here $\{Z_1,Z_2\}$ denotes the dual basis of $\mathfrak l^*$ for $\{L_1,L_2\}$.

Given the linear map $\gamma:\mathfrak l\rightarrow\Hom(\mathfrak a,\mathfrak l^*)$ we can describe $\gamma_0(a_1)$ and $\gamma_0(a_2)$ by a $2\times 2$-matrices $A$ and $B$ with respect to the standard basis $\{e_1,e_2\}$ in the domain $\mathfrak l$ and the corresponding dual basis $\{\sigma^1,\sigma^2\}$ in the codomain $\mathfrak l^*$. We obtain here
\begin{align}\label{eq:gamma0AB}
(A)_{ji}:=(\gamma_0(a_1)(e_i))e_j\quad (B)_{ji}:=(\gamma_0(a_2)(e_i))e_j
\end{align}
for $i,j=1,2$. 

If $\gamma_0(A)$ is a non-degenerate symmetric bilinear form on $\mathds R^2$, then we can consider conformal mappings with respect to $\gamma_0(A)$.

Let $\operatorname{CO}(2)$ denote the set of all linear maps $S:\mathds R^2\rightarrow \mathds R^2$, which are conformal with respect to $\gamma_0(a_1)=\sigma^1\otimes\sigma^1+\sigma^2\otimes\sigma^2$.
It holds $\operatorname{CO}(2)=\{S:\mathds R^2\rightarrow\mathds R^2\mid \lambda^2\gamma_0(a_1)=S^*\circ \gamma_0(a_1)\circ S, \lambda\neq 0\}$.

For $\gamma_0(a_1)=\sigma^1\otimes\sigma^1-\sigma^2\otimes\sigma^2$ we define 
$\tilde{\operatorname{CO}}(1,1)$ as the set of all linear maps $S:\mathds R^2\rightarrow\mathds R^2$ satisfying $\lambda\gamma_0(a_1)=S^*\circ \gamma_0(a_1)\circ S$ for some $\lambda\neq 0$.
The set $\tilde{\operatorname{CO}}(1,1)$ consists of all linear maps
\[S^{\pm}(s_1,s_2):=\begin{pmatrix}s_1&\pm s_2\\s_2&\pm s_1\end{pmatrix}\]
with $s_1^2-s_2^2\neq 0$.
Moreover, for some $S\in \tilde{\operatorname{CO}}(1,1)$ with $\lambda\gamma_0(a_1)=S^*\circ \gamma_0(a_1)\circ S$ let $\lambda(S)=\sign{\lambda}\in\{1,-1\}$ denote the sign of $\lambda$. 

We want to describe the action of $G$ with $A$ and $B$.

\begin{lemma}\label{lm:66}
Assume that $\gamma,\tilde\gamma:\mathfrak l\rightarrow\Hom(\mathfrak a,\mathfrak l^*)$ are given by $(A,B)$ and $(A,\tilde B)$, where $B$ and $\tilde B$ are not symmetric.
There is an somorphism of pairs $(S,U)$ satisfying $(S,U)^*\gamma=\tilde\gamma$
\begin{itemize}
\item for $A=\operatorname{id}$, if and only if there is an $S\in \operatorname{CO}(2)$ and a $\lambda\in\mathds R$ such that $\tilde B=\lambda A+S^TBS$,
\item for $A=\diag(1,-1)$, if and only if there is an $S\in\tilde{\operatorname{CO}}(1,1)$ and a $\lambda\in\mathds R$ such that $\tilde B=\lambda A+\lambda(S) S^TBS$,
\item for $A=\diag(1,0)$, if and only if there is a $\lambda\in\mathds R$ and an \[S=\begin{pmatrix}x&0\\y&z\end{pmatrix}\in GL(2,\mathds R)\] such that $\tilde B=\lambda A+ x^2 S^TBS$.
\end{itemize}
\end{lemma}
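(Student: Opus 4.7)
The plan is to translate the hypothesis $(S,U)^*\gamma=\tilde\gamma$ into two matrix equations and exploit the assumed non-symmetry of $B$. Expanding $U^{-1}a_1=pa_1+qa_2$ and $U^{-1}a_2=ra_1+sa_2$, the condition that $U$ is an automorphism of the symplectic $(\mathfrak a,\omega_\mathfrak a)$ forces $\det U^{-1}=ps-qr=1$. Using the elementary identity that $S^*\circ M\circ S$ is represented by $S^TMS$ for a map $M\colon\mathfrak l\to\mathfrak l^*$, the equation $(S,U)^*\gamma=\tilde\gamma$ becomes the pair
\begin{align*}
A        &= p\,S^TAS+q\,S^TBS,\\
\tilde B &= r\,S^TAS+s\,S^TBS.
\end{align*}

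Because every antisymmetric $2\times 2$-matrix is a multiple of $J=\bigl(\begin{smallmatrix}0&1\\-1&0\end{smallmatrix}\bigr)$ and $S^TJS=\det(S)\,J$, the antisymmetric part of the first equation reads $q\det(S)(B-B^T)=0$; since $B$ is not symmetric and $S$ is invertible, this forces $q=0$, and hence $s=1/p$. The first equation thereby reduces to the conformality condition $A=p\,S^TAS$. For $A=\operatorname{id}$ this yields $S^TS=(1/p)I$, forcing $1/p>0$ and $S\in\operatorname{CO}(2)$; for $A=\diag(1,-1)$ it places $S\in\tilde{\operatorname{CO}}(1,1)$ with $\lambda(S)=\sign(1/p)$; and for $A=\diag(1,0)$ a direct computation of $S^TAS$ forces $s_{12}=0$, which gives $S$ the claimed lower-triangular form with $1/p=s_{11}^2=x^2$. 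Substituting $s=1/p$ back, the second equation becomes $\tilde B=(r/p)\,A+(1/p)\,S^TBS$.

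In case~3 this is already the stated form $\tilde B=\lambda A+x^2\,S^TBS$ with $\lambda=r/p$. In cases~1 and~2 the scalar factor $1/p$ must still be absorbed into $S$. Replacing $S$ by $S':=\sqrt{|1/p|}\,S$ gives $(S')^TBS'=|1/p|\,S^TBS$, and one checks directly that $S'$ still lies in $\operatorname{CO}(2)$ respectively $\tilde{\operatorname{CO}}(1,1)$. In case~1 the factor $1/p>0$ is absorbed without loss of sign, producing $\tilde B=\lambda A+(S')^TBS'$. In case~2 the factor $|1/p|=\sign(1/p)\cdot(1/p)$ yields $\tilde B=\lambda A+\sign(1/p)(S')^TBS'$, and the identity $\lambda(S')=\sign(1/p)$, which follows from $(S')^TAS'=|1/p|\cdot(1/p)\,A$, recovers exactly the form stated in the lemma.

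The converse direction is parallel: given data $(\lambda,S)$ of the prescribed form, set $q=0$ and, after an appropriate rescaling $S\mapsto cS$ chosen to match the scalar factor absorbed above, read off $p$, $r$, $s$ from the two matrix equations; the resulting $U\in\operatorname{SL}(2,\mathds R)$ then defines an isomorphism of pairs $(S,U)\in G$ satisfying $(S,U)^*\gamma=\tilde\gamma$. The main technical delicacy I expect is the sign bookkeeping in case~2 via $\lambda(S)$, because both the rescaling factor and the conformal type of $A$ contribute signs; all other steps reduce to routine $2\times 2$ linear algebra.
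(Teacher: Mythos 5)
Your proposal is correct and follows essentially the same route as the paper: both reduce $(S,U)^*\gamma=\tilde\gamma$ to the pair of congruence equations, use the non-symmetry of $B$ to force $U^{-1}a_1\in\Span\{a_1\}$ (your $q=0$ via the skew part is the same fact the paper extracts from preservation of the symmetric line in $\mathfrak a$), turn the first equation into the conformality condition on $S$, and absorb the remaining scalar by rescaling $S$, with the same $\lambda(S)$ sign bookkeeping in the $\diag(1,-1)$ case. No gaps.
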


\begin{proof}
Since $\gamma_0(a_2)$ and $\tilde\gamma_0(a_2)$ are not symmetric, the isomorphism $U$ of $(\mathfrak a,\omega_\mathfrak a)$ satisfies $Ua_1\in\Span\{a_1\}$.
Thus, there is an $(S,U)$ satisfying $(S,U)^*\gamma=\tilde\gamma$, if and only if there are numbers $a\neq 0$, $b\in\mathds R$ and an $S\in GL(2,\mathds R)$ such that
\[\tilde\gamma_0(a_1)=aS^*\circ\gamma_0(a_1)\circ S,\quad \tilde\gamma_0(a_2)=S^*\circ\gamma_0(ba_1+a^{-1}a_2)\circ S.\]
Now, $a>0$ and $S_2=\frac{1}{\sqrt{a}}S\in\operatorname{CO}(2)$ is a conformal map for $\tilde\gamma_0(a_1)=\gamma_0(a_1)=\sigma^1\otimes\sigma^1+\sigma^2\otimes\sigma^2$  and we get 
\[\tilde\gamma_0(a_2)=\frac{b}{a}\gamma_0(a_1)+{S_2}^*\circ\gamma_0(a_2)\circ S_2.\]
For $\tilde\gamma_0(a_1)=\gamma_0(a_1)=\sigma^1\otimes\sigma^1-\sigma^2\otimes\sigma^2$ 
we have $S_2=\frac{1}{\sqrt{|a|}}S\in\tilde{\operatorname{CO}}(1,1)$
and
\[\tilde\gamma_0(a_2)=\pm b\gamma_0(a_1)+\lambda(S_2){S_2}^*\circ\gamma_0(a_2)\circ S_2.\]
Finally, for $\tilde\gamma_0(a_1)=\gamma_0(a_1)=\sigma^1\otimes\sigma^1$ we can easily see that $\tilde\gamma_0(a_1)=aS^*\circ\gamma_0(a_1)\circ S$ is equivalent to 
\[S=\begin{pmatrix}x&0\\y&z\end{pmatrix}\in GL(2,\mathds R)\] 
and $a=x^{-2}$. Thus
\[\tilde\gamma_0(a_2)=bx^{-2}\gamma_0(a_1)+x^2S^*\circ\gamma_0(a_2)\circ S\]
and the assertion yields.
\end{proof}

For a given $\gamma_0(a_2)$ let $\gamma_0(a_2)_s$ denote the symmetric part of $\gamma_0(a_2)$ and $\gamma_0(a_2)_a$ the skewsymmetric one.
We set 
\begin{align}
\gamma_a&:=\sigma^1\otimes\alpha^2\otimes\sigma^2-\sigma^2\otimes\alpha^2\otimes\sigma^1,\label{eq:brauch11}\\
\gamma^1_t&:=\sigma^1\otimes\alpha^1\otimes\sigma^1+\sigma^2\otimes\alpha^1\otimes\sigma^2+t(\sigma^1\otimes\alpha^2\otimes\sigma^1-\sigma^2\otimes\alpha^2\otimes\sigma^2)+\gamma_a.\label{eq:brauch12}
\end{align}

Using Lemma \ref{lm:66} it is easy to see that for every $\gamma$ satisfying $\gamma_0(a_1)=\sigma^1\otimes\sigma^1+\sigma^2\otimes\sigma^2$ and not symmetric $\gamma_0(a_2)$ there is an $(S,U)\in G$ and a $t\geq 0$ such that $(S,U)^*\gamma=\gamma^1_t$. This $t$ is uniquely determined:
For $\gamma_0(a_1)=\sigma^1\otimes\sigma^1+\sigma^2\otimes\sigma^2$ and not symmetric $\gamma_0(a_2)$ let $\lambda_1,\lambda_2$ denote the eigenvalues of the symmetric part of $\gamma_0(a_2)$ and let $\kappa\neq 0$ be the number such that $\kappa\gamma_a$ equals the skewsymmetric part of $\gamma_0(a_2)$. Then 
\begin{align}\label{eq:term}
\frac{1}{2}|(\lambda_1-\lambda_2)\kappa^{-1}|
\end{align}
is invariant under equivalence and all isomorphisms of pairs $(S,U)$, which satisfy
\[(S,U)^*\gamma_0(a_1)=\sigma^1\otimes\sigma^1+\sigma^2\otimes\sigma^2.\]
Since, moreover, the term of (\ref{eq:term}) has the value $t$ for  $\gamma=\gamma^1_t$, the value $t$ is an invariant of the $G$-orbits.
For $\tau=\sigma^1\otimes\tau_1a_1+\sigma^2\otimes\tau_2a_1$ where $\tau_1,\tau_2\in\mathds R$ we have
\[-\gamma^1_t(e_1,\tau(e_1),e_2)-2\gamma^1_t(e_2,\tau(e_1),e_1)-\gamma^1_t(e_1,\tau(e_2),e_1)=-\tau_2\]
and 
\[-2\gamma^1_t(e_1,\tau(e_2),e_2)+\gamma^1_t(e_2,\tau(e_2),e_1)+\gamma^1_t(e_2,\tau(e_1),e_2)=\tau_1.\]
Using no. (\ref{eq:12}) and (\ref{eq:13}) we obtain that every $[\gamma,\epsilon,0]$ with $\gamma_0(a_1)=\sigma^1\otimes\sigma^1+\sigma^2\otimes\sigma^2$ and not symmetric $\gamma_0(a_2)$ is in the $G$-orbit of $[\gamma^1_t,0,0]$
for an uniquely determined $t\geq 0$.
\\\quad\\
For $t\in\mathds R$ we define
\begin{align}
\gamma^0_t&=\sigma^1\otimes\alpha^1\otimes\sigma^1+t(\sigma^1\otimes\alpha^2\otimes\sigma^2+\sigma^2\otimes\alpha^2\otimes\sigma^1)+\gamma_a,\label{eq:brauch13}\\
\gamma^0_\pm&=\sigma^1\otimes\alpha^1\otimes\sigma^1\pm\sigma^2\otimes\alpha^2\otimes\sigma^2+\gamma_a.\label{eq:brauch14}
\end{align}

Now, assume that $\gamma_0(a_1)=\sigma^1\otimes\sigma^1$ and \[\gamma_0(a_2)=a\sigma^1\otimes\sigma^1+b\sigma^1\otimes\sigma^2+b\sigma^2\otimes\sigma^1+c\sigma^2\otimes\sigma^2+\kappa(\sigma^1\otimes\sigma^2-\sigma^2\otimes\sigma^1)\]
for $a,b,c\in\mathds R$ and $\kappa\neq 0$.

The cases $c=0$, $c>0$ and $c<0$ are invariant under equivalence and the action of all $(S,U)\in G$, which satisfy $(S,U)^*\gamma_0(a_1)=\sigma^1\otimes\sigma^1$.

For $c=0$ we set $S=\diag(1,\kappa^{-1})$ and $\lambda=-a$.
This shows that there is an $(S,U)$ satisfying $(S,U)^*\gamma=\gamma^0_t$ for some $t \in\mathds R$.
In addition, this $t\in\mathds R$ is uniquely defined, because on the one hand the value of 
\begin{align}\label{eq:oben3}
\frac{{\gamma_0}(a_2)_s(e_1,e_2)}{{\gamma_0}(a_2)_a(e_1,e_2)}
\end{align}
is invariant under equivalence and all isomorphisms of pairs $(S,U)$ which satisfy
\[(S,U)^*\gamma_0(a_1)=\sigma^1\otimes\sigma^1.\]
On the other hand the value of formula (\ref{eq:oben3}) equals the parameter $t$ for $\gamma=\gamma^0_t$.\\
Assume $\tau=\sigma^1\otimes(\tau_1a_1+\tau_2a_2)+\sigma^2\otimes(\tau_3a_1+\tau_4a_2)$ and $\gamma=\gamma^0_t$. Then
\[-\gamma(e_1,\tau(e_1),e_2)-2\gamma(e_2,\tau(e_1),e_1)-\gamma(e_1,\tau(e_2),e_1)=(t-3)\tau_2-\tau_3\]
and 
\[-2\gamma(e_1,\tau(e_2),e_2)+\gamma(e_2,\tau(e_2),e_1)+\gamma(e_2,\tau(e_1),e_2)=-(t+3)\tau_4.\]
Thus $(\gamma^0_t,\epsilon,0)$ and $(\gamma^0_t,0,0)$ for $t\neq-3$ and for all $\epsilon\in C^2(\mathfrak l,\mathfrak l^*)$ are equivalent (compare no. (\ref{eq:12}) and (\ref{eq:13})).
Moreover, the cocycles $(\gamma^0_{-3},\epsilon,0)$ and $(\gamma^0_{-3},\epsilon',0)$ are equivalent if and only if $\epsilon(e_1,e_2)(e_2)=\epsilon'(e_1,e_2)(e_2)$.
We consider $\lambda=0$ and $S=\diag(x,x^{-3})$. Then $(S,U)^*\gamma^0_{-3}=\gamma^0_{-3}$ and $(S,U)^*\epsilon=x^{-5}\epsilon$.
So, $[\gamma^0_{-3},\epsilon,0]$ lies in the $G$-orbit of either $[\gamma^0_{-3},0,0]$ or $[\gamma^0_{-3},\sigma^1\wedge\sigma^2\otimes\sigma^2,0]$.

For $c>0$ there is an $(S,U)\in G$ such that $(S,U)^*\gamma=\gamma^0_+$. 
This can be easily seen by choosing a suitable $\lambda$ and \begin{align}\label{eq:S1}
S=\begin{pmatrix}x&0\\-bxc^{-1}&\kappa^{-1}x^{-3}\end{pmatrix}
\end{align}
with $x=\sqrt[4]{c\kappa^{-2}}$.
Using the knowledge of the equivalence classes yields that $[\gamma,\epsilon,0]$ lies in the $G$-orbit of $[\gamma^0_+,0,0]$.
Analogous we obtain for a negative $c$ that $[\gamma,\epsilon,0]$ lies in the $G$-orbit of $[\gamma^0_-,0,0]$.
\\\quad\\
Now, let $(\gamma,\epsilon,0)$ be a nilpotent cocycle such that $\gamma_0(a_1)=\sigma^1\otimes\sigma^1-\sigma^2\otimes\sigma^2$ and $\gamma_0(a_2)$ is not symmetric.
We define
\begin{align}
\gamma_{s1}:=\sigma^1\otimes\alpha^2\otimes\sigma^1+\sigma^2\otimes\alpha^2\otimes\sigma^2&,\quad
\gamma_{s2}:=\sigma^1\otimes\alpha^2\otimes\sigma^2+\sigma^2\otimes\alpha^2\otimes\sigma^1,\label{eq:brauch15}\\
\gamma^{-1}:=\sigma^1\otimes\alpha^1\otimes&\sigma^1-\sigma^2\otimes\alpha^1\otimes\sigma^2,\label{eq:brauch16}\\
\end{align}
W. l. o. g. we can assume that $\gamma=\gamma^{-1}+a\gamma_{s1}+b\gamma_{s2}+\kappa\gamma_a$ for some $a,b\in\mathds R$, $\kappa\neq 0$.
For $S=S^\pm(0,\mu)$ we have
\[\lambda(S)S^*\circ\gamma_0(a_2)_s\circ S=-\mu^2\big(a(\sigma^1\otimes\sigma^1+\sigma^2\otimes\sigma^2)\pm b(\sigma^1\otimes\sigma^2+\sigma^2\otimes\sigma^1)\big)\]
and for $S=S^\pm(\mu,0)$ we obtain 
\[\lambda(S)S^*\circ\gamma_0(a_2)_s\circ S=\mu^2\big(a(\sigma^1\otimes\sigma^1+\sigma^2\otimes\sigma^2)\pm b(\sigma^1\otimes\sigma^2+\sigma^2\otimes\sigma^1)\big).\]
For the skewsymmetric tensor $\gamma_0(a_2)_a$ of $\gamma_0(a_2)$ and $S\in\tilde{\operatorname{CO}}(1,1)$ it holds
\[\lambda(S)S^*\circ\gamma_0(a_2)_a\circ S=\lambda(S)\det(S)\gamma_0(a_2)_a.\]
Since $S_1=S^+(\mu,0)\in \tilde{\operatorname{CO}}(1,1)$ and $S_2=S^-(0,\mu)\in\tilde{\operatorname{CO}}(1,1)$ satisfy 
$\lambda(S_1)\det(S_1)=\mu^2$ and $\lambda(S_2)\det(S_2)=-\mu^2$, there is an $(S,U)\in G$ for $a=b=0$ such that
\[(S,U)^*\gamma=\gamma^{-1}+\gamma_a.\]

Now, assume $a=\pm b$. Using $S=S^\pm(0,\mu)$ or $S=S^\pm(\mu,0)$ we can assume w. l. o. g. that $a=\pm b=1$.
For every $\mu>0$ there is a conformal map $S^+(s_1,s_2)$ with respect to  $\gamma_0(a_1)$ satisfying $(s_1+s_2)^2=1$, whose determinant equals $\mu$.
Thus, there is an $(S,U)\in G$ such that 
\[(S,U)^*\gamma=\gamma^{-1}+\gamma_{s1}+\gamma_{s2}\pm\gamma_a.\]
Now, assume $S\in \tilde{\operatorname{CO}}(1,1)$, $S=S^\pm(s_1,s_2)$ and $s_1^2-s_2^2\neq 0$.
Since \[\lambda(S)S^*\circ(\sigma^1+\sigma^2)\otimes(\sigma^1+\sigma^2)\circ S=\sign(s_1^2-s_2^2)(s_1+s_2)^2(\sigma^1\pm\sigma^2)\otimes(\sigma^1\pm\sigma^2)\]
the equation
\[\lambda(S)S^*\circ(\sigma^1+\sigma^2)\otimes(\sigma^1+\sigma^2)\circ S=(\sigma^1\pm\sigma^2)\otimes(\sigma^1\pm\sigma^2)\]
implies $S=S^+(s_1,s_2)$, $\det S>0$ and $\lambda(S)=1$.
Hence there is no $(S,U)\in G$ such that 
\[(S,U)^*(\gamma^{-1}+\gamma_{s1}+\gamma_{s2}+\gamma_a)=\gamma^{-1}+\gamma_{s1}+\gamma_{s2}-\gamma_a.\]

Now, assume $\gamma=\gamma^{-1}+a\gamma_{s1}+b\gamma_{s2}+\kappa\gamma_a$ for $a\neq\pm b$ and $\kappa\neq 0$.
At first, we choose $S=S^+(s_1,s_2)$ mit $s_1^2a+2bs_1s_2+as_2^2=0$ for $a^2-b^2>0$ and $bs_2^2+2as_1s_2+bs_1^2=0$ for $a^2-b^2>0$ respectively.
We set for instance $s_3=-\sqrt{s_1^2+1}$ and $s_1$ as the solution of $2s_1^2a+a-2bs_1\sqrt{s_1^2+1}=0$ for $a^2<b^2$. 
Then 
\[S^*\circ\gamma_0(a_2)_s\circ S=\mu(\sigma^1\otimes\sigma^2+\sigma^2\otimes\sigma^1)\] if $a^2<b^2$ and \[S^*\circ\gamma_0(a_2)_s\circ S=\mu(\sigma^1\otimes\sigma^1+\sigma^2\otimes\sigma^2)\] if $a^2>b^2$
for some suitable $\mu\neq 0$.
Using $S^\pm(\sqrt{|\mu|},0)$ or $S^\pm(0,\sqrt{|\mu|})$ respectively we obtain that there is an $(S,U)\in G$ and a $\tilde\kappa>0$ such that
\[(S,U)^*\gamma=\gamma^{-1}+\gamma_{s_2}+\tilde\kappa\gamma_a\] if $a^2<b^2$ and 
\[(S,U)^*\gamma=\gamma^{-1}+\gamma_{s_1}+\tilde\kappa\gamma_a\] if $a^2<b^2$.
Moreover, there is an $(S,U)\in G$ satisfying 
\[(S,U)^*(\gamma^{-1}+\gamma_{s_1}+\kappa_1\gamma_a)=\gamma^{-1}+\gamma_{s_1}+\kappa_2\gamma_a\]
for $\kappa_1,\kappa_2>0$, if and only if there is an $S$, which is conformal with respect to $\gamma_0(a_1)=\sigma^1\otimes\sigma^1-\sigma^2\otimes\sigma^2$ and orthogonal with respect to $\gamma_0(a_2)_s$, satisfying $\det(S)\kappa_1=\kappa_2$. From the orthogonality of $S$ we get $\det S=\pm 1$ and obtain $\kappa_1=\kappa_2>0$.
There is an $(S,U)\in G$ satisfying \[(S,U)^*(\gamma^{-1}+\gamma_{s_2}+\kappa_1\gamma_a)=\gamma^{-1}+\gamma_{s_2}+\kappa_2\gamma_a\]
for $\kappa_1,\kappa_2>0$, if and only if there is either a conformal map $S_1$ with respect to $\gamma_0(a_1)$, which is orthogonal with respect to $\sigma^1\otimes\sigma^2+\sigma^2\otimes\sigma^1$ and satisfies $\det(S_1)\kappa_1=\kappa_2$, or a map $S_2\in \tilde{\operatorname{CO}}(1,1)$, which is not conformal with respect to $\gamma_0(a_1)$ and satisfies \[\lambda(S_2)S_2^*\circ(\sigma^1\otimes\sigma^2+\sigma^2\otimes\sigma^1)\circ S_2=\sigma^1\otimes\sigma^2+\sigma^2\otimes\sigma^1\] and $-\det(S_2)\kappa_1=\kappa_2$.
Since $S_1$ respectively $(\sigma^1\otimes e_1-\sigma^2\otimes e_2)\circ S_2$  is orthogonal with respect to $\sigma^1\otimes\sigma^2+\sigma^2\otimes\sigma^1$ it follows $\det(S_1)=\pm 1$ or $\det(S_2)=\pm 1$ respectively. Thus we obtain $\kappa_1=\kappa_2>0$.

Using equations (\ref{eq:12}) and (\ref{eq:13}) we get 
$[\gamma,\epsilon,0]=[\gamma,0,0]$ for every $\epsilon\in C^2(\mathfrak l,\mathfrak l^*)$ and \[\gamma\in\{\gamma^{-1}+\gamma_a,\gamma^{-1}+\gamma_{s1}+\kappa\gamma_a,\gamma^{-1}+\gamma_{s2}+\kappa\gamma_a,\gamma^{-1}+\gamma_{s1}+\gamma_{s2}\pm\gamma_a\mid \kappa>0\}.\]

Now, we show that the $G$-orbits of 
$[\gamma^{-1}+\gamma_a,0,0]$,
$[\gamma^{-1}+\gamma_{s1}+\kappa\gamma_a,0,0]$,
$[\gamma^{-1}+\gamma_{s2}+\kappa\gamma_a,0,0]$ and 
$[\gamma^{-1}+\gamma_{s1}+\gamma_{s2}\pm\gamma_a,0,0]$
for $\kappa>0$ are pairwise distinct.
Therefor, it remains to show that the $G$-orbits of 
$[\gamma,0,0]$ and $[\tilde\gamma,0,0]$ for $\gamma_0(a_1)=\tilde\gamma_0(a_1)=\gamma^{-1}(a_1)$ and
$\gamma_0(a_2)_s,\tilde\gamma_0(a_2)\in\{\gamma_{s1},\gamma_{s2},\gamma_{s1}+\gamma_{s2},0\}$ where $\gamma_0(a_2)_s\neq\tilde\gamma_0(a_2)_s$ are distinct.

At first, we note that for $\gamma_0(a_1)=\sigma^1\otimes\sigma^1-\sigma^2\otimes\sigma^2$ and $\gamma_0(a_2)_s=\sigma^1\otimes\sigma^1+\sigma^2\otimes\sigma^2$ there is an $A\in\mathfrak a$, $A\neq 0$ such that $\gamma_0(A)_s$ is a positive definite bilinear form.
If $\gamma_0(a_2)_s=\sigma^1\otimes\sigma^2+\sigma^2\otimes\sigma^1$, then $\gamma_0(A)_s$ is non-degenerate for all $A\in\mathfrak a$, $A\neq 0$, but also not positive definite.
For $\gamma_0(a_2)_s=0$ we have that $\gamma_0(A)_s$ is not positive definite for all $A\in\mathfrak a$ and that there is an $A_2\in \mathfrak a$, $A_2\neq 0$ which satisfies $\gamma_0(A_2)_s=0$.
Finally, for $\gamma_0(a_2)_s=\gamma_{s1}(a_2)+\gamma_{s2}(a_2)$ we have $\gamma_0(A)_s\neq 0$ and not positive definite for all $A\in\mathfrak a$ and that there is an $A_2\in \mathfrak a$, $A_2\neq 0$ such that $\gamma_0(A_2)_s\neq 0$ is degenerate.
Hence the corresponding $G$-orbits are pairwise distinct.
\\\quad\\
Now, let $(\gamma,\epsilon,0)$ be a nilpotent cocycle and let $\gamma_0(A)$ be symmetric for all $A\in\mathfrak a$. 
Then there is a $\tilde A\in \mathfrak a$ such that $\gamma_0(\tilde A)$ has signature $(1,1)$.
This means that we can w. l. o. g. assume that
$\gamma_0(a_1)=\sigma^1\otimes\sigma^1-\sigma^2\otimes\sigma^2$ and $\gamma_0(a_2)$ is symmetric with $\ker\gamma_0=\{0\}$.

The basis transformations for $\gamma_0(a_1)=\sigma^1\otimes\sigma^1-\sigma^2\otimes\sigma^2$ and not symmetric $\gamma_0(a_2)$ are also valid for symmetric $\gamma_0(a_2)$.
Thus, we only have to consider $(\gamma,\epsilon,0)$ for \[\gamma\in\{\gamma^{-1}+\gamma_{s1},\gamma^{-1}+\gamma_{s_2},\gamma^{-1}+\gamma_{s1}+\gamma_{s_2}\}.\]
Here the case $\gamma_0(a_2)=0$ omits since $\ker\gamma_0=\{0\}$.

Using equations (\ref{eq:12}) and (\ref{eq:13}) we obtain 
$[\gamma,\epsilon,0]=[\gamma,0,0]$ for all $\epsilon\in C^2(\mathfrak l,\mathfrak l^*)$ and all 
$\gamma\in\{\gamma^{-1}+\gamma_{s1},\gamma^{-1}+\gamma_{s2},\gamma^{-1}+\gamma_{s1}+\gamma_{s2}\}.$

Moreover, the same argumentation as is the previous case, where $\gamma_0(a_1)=\sigma^1\otimes\sigma^1-\sigma^2\otimes\sigma^2$ and  $\gamma_0(a_2)$ is non-symmetric, yields that the $G$-orbits of 
$[\gamma^{-1}+\gamma_{s1},0,0]$,
$[\gamma^{-1}+\gamma_{s2},0,0]$ and
$[\gamma^{-1}+\gamma_{s1}+\gamma_{s2},0,0]$ are pairwise distinct.
So, we have completeley solved the case $\xi=0$.
\\\quad\\
Now, assume $\xi\neq 0$.
For a given $(\gamma,\epsilon,\xi)\in Z^2(\mathfrak l,\mathfrak a,\omega_\mathfrak a)_0$ there is always a vector $v_1\in\mathfrak l$ and a Darboux basis of $\mathfrak a$ satisfying \[\xi(v_1)=\begin{pmatrix}0&1\\0&0\end{pmatrix}\in \Hom(\mathfrak a).\]
Since $\xi(L)$ is nilpotent for all $L\in\mathfrak l$, the elements $\xi(L)$ and $\xi(v_1)$ are linearly dependent in $\Hom(\mathfrak a)$. Thus, we can choose a $v_2\neq 0\in\mathfrak l$ satisfying $\xi(v_2)=0\in\Hom(\mathfrak a)$.
Hence, there is a basis $\{v_1,v_2\}$ of $\mathfrak l$ and a Darboux basis of $\mathfrak a$ such that
\begin{align*}
\xi(v_1)=\begin{pmatrix}0&1\\0&0\end{pmatrix}\in \Hom(\mathfrak a)\quad \text{and}\quad \xi(v_2)=\begin{pmatrix}0&0\\0&0\end{pmatrix}\in \Hom(\mathfrak a).
\end{align*}

In the following, we consider $(\gamma,\epsilon,\xi^0)$ where 
\begin{align}\label{eq:brauch8}
\xi^0=\sigma^1\otimes\alpha^2\otimes a_1.
\end{align}
For this triple $(\gamma,\epsilon,\xi^0)$ we have $\beta=0$. Moreover, it is a cocycle if and only if $\gamma(e_2)(a_1)=0\in\mathfrak l^*$, since this equation is equivalent to the only non-trivial condition (\ref{eq:lemma3}).
\begin{lemma}
A triple $(\gamma,\epsilon,\xi^0)\in Z^2(\mathfrak l,\mathfrak a,\omega_\mathfrak{a})$ is balanced (and hence nilpotent) if and only if $\gamma(e_1)(a_1)\neq 0$ and one of the following three conditions holds:
\[\gamma(e_2)\neq 0\in\Hom(\mathfrak a,\mathfrak l^*),\quad \gamma(e_1,a_1,e_2)\neq 0\quad\text{ or }\quad\gamma(e_1,a_2,e_2)^2\neq \epsilon(e_1,e_2)(e_2).\]
\end{lemma}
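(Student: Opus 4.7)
My plan is to unwind Definition~\ref{SLA:def:balanced} directly for this specific $\xi^0$, exploiting several simplifications: $\mathfrak{a}=\mathds{R}^2$ is abelian, so $\ad_\mathfrak{a}=0$ and $\mathfrak{z}(\mathfrak{a})=\mathfrak{a}$; and a short check from (\ref{def:beta}) shows $\beta=0$ (as already noted in the excerpt: $\xi^0(L)A=\sigma^1(L)\alpha^2(A)a_1$ makes $\omega_\mathfrak{a}(\xi^0(L)A_1,A_2)=\sigma^1(L)\alpha^2(A_1)\alpha^2(A_2)$ symmetric in $A_1,A_2$). Furthermore $\mathfrak{a}^\mathfrak{l}_{\xi^0}=\ker\xi^0_0=\{A:\alpha^2(A)=0\}=\Span\{a_1\}$. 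Once I have these ingredients, the rest is linear algebra.

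For condition (b): with $\beta_0=0$ and the identifications above, the relevant subspace collapses to $\Span\{a_1\}\cap\ker\gamma_0$. Since $\omega_\mathfrak{a}(a_1,a_1)=0$, a nonzero subspace of $\Span\{a_1\}$ is always degenerate, so (b) is equivalent to $a_1\notin\ker\gamma_0$, i.e.\ $\gamma_0(a_1)\neq 0\in\Hom(\mathfrak{l},\mathfrak{l}^*)$. Using the cocycle condition $\gamma(e_2)(a_1)=0$ (the only nontrivial instance of (\ref{eq:lemma3}) in this setting, as recorded in the excerpt), this is equivalent to $\gamma(e_1)(a_1)\neq 0$.

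For condition (a): since $\ad_\mathfrak{a}=0$, equation (a.1) becomes $\xi^0(L)=0$, forcing $L=\lambda e_2$. Because $\beta_0=0$, (a.2) gives $\lambda\gamma(e_2)=0$, so if $\gamma(e_2)\neq 0$ then $\lambda=0$ and (a) holds. Otherwise I compute $(\gamma(e_1))^*(e_2)=\gamma(e_1,a_2,e_2)a_1-\gamma(e_1,a_1,e_2)a_2$ from the definition of the dual, plug into (a.3), and get two scalar equations $\lambda\gamma(e_1,a_1,e_2)=0$ and $\alpha^2(A)=-\lambda\gamma(e_1,a_2,e_2)$. The first forces $\lambda=0$ whenever $\gamma(e_1,a_1,e_2)\neq 0$. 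In the remaining subcase $\gamma(e_2)=0$, $\gamma(e_1,a_1,e_2)=0$, condition (b) forces $\gamma(e_1,a_1,e_1)\neq 0$; evaluating (a.4) at $e_2$ yields $\lambda\epsilon(e_1,e_2)(e_2)=\lambda\gamma(e_1,a_2,e_2)^2$, which kills $\lambda$ precisely when $\epsilon(e_1,e_2)(e_2)\neq\gamma(e_1,a_2,e_2)^2$. If even this last inequality fails, then the remaining (a.4) evaluation at $e_1$ just uses $\gamma(e_1,a_1,e_1)\neq 0$ to solve for the $a_1$-coefficient $\mu_1$ of $A$ for any $\lambda$, so $\mathds{L}\supsetneq\{0\}$ and (a) genuinely fails. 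Negating the three obstructions gives the three alternatives in the statement.

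The nilpotency addendum is immediate from Lemma~\ref{Lemma:Nilpotenz}: since $\alpha^2(a_1)=0$, one has $\xi^0(L_1)\xi^0(L_2)=0$ as an element of $\Hom(\mathfrak{a})$, so (\ref{eq:Nilpotenz}) holds with $n=2$, and $\mathfrak{a}=\mathds{R}^2$ is trivially nilpotent. The main bookkeeping issue will be the very last subcase of (a): making sure that when the $e_2$-evaluation of (a.4) imposes no constraint on $\lambda$, the $e_1$-evaluation is also solvable, which is where the condition $\gamma(e_1)(a_1)\neq 0$ from (b) is used a second time (specifically its $e_1$-component). Once that point is clear, the case split is mechanical.
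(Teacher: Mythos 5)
Your proposal is correct and follows essentially the same route as the paper: it reduces condition (b) to $\Span\{a_1\}\cap\ker\gamma_0=\{0\}$ using $\beta=0$ and the cocycle relation $\gamma(e_2)(a_1)=0$, and then unwinds conditions (a.1)--(a.4) for $L=\lambda e_2$ to produce exactly the three alternatives. Your explicit verification of the last subcase (that the $e_1$-evaluation of (a.4) is always solvable via $\gamma(e_1,a_1,e_1)\neq 0$, so that $\mathds L$ is genuinely nontrivial when all three conditions fail) is a point the paper leaves implicit, but it is the same argument.
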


\begin{proof}
Because of $\beta=0$ we have
\[\mathfrak z(\mathfrak a)\cap\mathfrak a^\mathfrak l_{\xi^0}\cap\ker\beta_0\cap\ker\gamma_0=\Span\{a_1\}\cap\ker\gamma_0.\]
Thus, condition (b) on page \pageref{eq:aundb} is satisfied, if and only if $\gamma(e_1)(a_1)\neq 0$.

Now, we show that the three given cases are the only ones, which satisfy condition (a) under the assumtion $\gamma(e_1)(a_1)\neq 0$.
Because of $\ad_\mathfrak a=0$ and $\beta=0$ we have 
\[\mathds L=\Span\{e_2\}\cap\ker\gamma\cap\{L\in\mathfrak l\mid \exists A\in\mathfrak a:\text {(a.3)-(a.4) holds}\}.\]
Thus, condition (a) is satisfied, if and only if $\gamma(e_2)\neq 0\in\Hom(\mathfrak a,\mathfrak l^*)$ or if there is no $A\in\mathfrak a$ for $e_2$ such that (a.3) and (a.4) hold.
The latter holds for $\gamma(e_2)=0$, if and only if 
\begin{align}\label{eq:AeqKlbla}
\alpha(e_2,e_1)=\xi^0(e_1)A\quad\text{ and }\quad \epsilon(e_2,e_1)=\gamma(e_1)A
\end{align}
is not satisfied simultaneously for any $A\in\mathfrak a$.
Using 
$\omega_\mathfrak a(\alpha(e_1,e_2),a_i)=\gamma(e_1,a_i,e_2)-\gamma(e_2,a_i,e_1)=\gamma(e_1,a_i,e_2)$
yields
\[\alpha(e_1,e_2)=\gamma(e_1,a_2,e_2)a_1-\gamma(e_1,a_1,e_2)a_2=:x_4a_1-x_3a_2.\]
Since, moreover, $\xi^0(e_1)A$ lies in $\Span\{a_1\}$, the first equation in (\ref{eq:AeqKlbla}) is equivalent to $x_3=0$ and $A=A_1a_1-x_4a_2$ for some $A_1\in\mathds R$. Hence the equations in (\ref{eq:AeqKlbla}) are satisfied, if and only if additionally $\epsilon(e_1,e_2)e_2=x_4^2$ holds and we obtain our assertion.
\end{proof}

Because of $\beta=0$ and $\ad_\mathfrak a=0$ the cocycles $(\gamma,\epsilon,\xi^0), (\gamma',\epsilon',\xi^0)\in Z^2(\mathfrak l,\mathfrak a,\omega_\mathfrak a)_0$ are equivalent, if and only if 
\begin{align}\label{eq:95a}
\gamma'_0(a_j)(e_i)=\gamma_0(a_j)(e_i)+\tau^*(\xi^0(e_i)a_j)
\end{align}
for $i,j=1,2$ and \[\epsilon'(e_1,e_2)=\epsilon(e_1,e_2)+\tau^*(\alpha(e_1,e_2))-\ev(\gamma\wedge\tau)(e_1,e_2)-\tau^*\circ \rd_{\xi^0}\tau(e_1,e_2)\]
for some $\tau\in C^1(\mathfrak l,\mathfrak a)$.
Using $\tau^*(\alpha(e_1,e_2))=-\gamma(e_1,\tau(\cdot),e_2)+\gamma(e_2,\tau(\cdot),e_1)$ yields 
\begin{align}\label{eq:95b}
\epsilon'(e_1,e_2)=\epsilon(e_1,e_2)&-\gamma(e_1,\tau(\cdot),e_2)+\gamma(e_2,\tau(\cdot),e_1)-\gamma(e_1,\tau(e_2),\cdot)\\
&+\gamma(e_2,\tau(e_1),\cdot)-\omega_\mathfrak a(\tau\cdot,\xi^0(e_1)\tau(e_2)).\nonumber
\end{align}
Especially, we have $\gamma'_0(A)(L)=\gamma_0(A)(L)$ for equivalent $(\gamma',\epsilon',\xi^0)$ and $(\gamma,\epsilon,\xi^0)$, if $A\in\ker\xi^0(L)$.

We define linear maps \[S(a,s_3,s_4):=\begin{pmatrix}\frac{1}{a^2}&0\\s_3&s_4\end{pmatrix}\text{ and }U(a,b):=\begin{pmatrix}a&b\\0&\frac{1}{a}\end{pmatrix}\]
with respect to the basis $\{e_1,e_2\}$ of $\mathfrak l$ and the Darboux basis $\{a_1,a_2\}$ of $\mathfrak a$.
It holds $(S,U)^*\xi^0=\xi^0$ if and only if $S=S(a,s_3,s_4)$ and $U=U(a,b)$ for $a,s_4\neq 0$ and $s_3\in\mathds{R}$.

In the following let us assume that $(\gamma,\epsilon,\xi^0)\in Z^2(\mathfrak l,\mathfrak a,\omega_\mathfrak a)_0$ satisfies $\gamma(e_1,a_1,e_2)\neq 0$.
Moreover, we can w. l. o. g. assume that $\gamma_0(a_1)=\sigma^1\otimes\sigma^2$ holds, since we can consider $\gamma_0(a_1)$ with respect to the basis $\{e_1+s_3e_2,s_4e_2\}$ of $\mathfrak l$ for some suitable $s_3\in\mathds R$ and $s_4\neq 0$, otherwise.
Furthermore, it is easy to see that 
\[(S(a,s_3,s_4),U(a,b))^*\gamma_0(a_1)=\gamma_0(a_1)\]
is satisfied if and only if $s_3=0$ and $s_4=a^3$ hold.
Assume \[\gamma_0(a_2)=b_1\sigma^1\otimes\sigma^1+b_2\sigma^2\otimes\sigma^1+b_3\sigma^1\otimes\sigma^2+b_4\sigma^2\otimes\sigma^2.\]

Motivated by the action of $C^1(\mathfrak l,\mathfrak a)$ on the cocycles, we define the linear maps $\gamma_{1,x,y}:\mathfrak l\rightarrow\Hom(\mathfrak a,\mathfrak l^*)$ for $x,y\in\mathds R$ by 
\begin{align}\label{eq:brauch9}
\gamma_{1,x,y}:=\sigma^1\otimes\alpha^1\otimes\sigma^2+x\sigma^2\otimes\alpha^2\otimes\sigma^1+y\sigma^2\otimes\alpha^2\otimes\sigma^2.
\end{align}
Using the equations (\ref{eq:95a}) and (\ref{eq:95b}) yields that the equivalence class of $(\gamma_{1,x,y},0,\xi^0)$ consists of all cocycles $(\gamma_{1,x,y}-\sigma^1\otimes\alpha^2\otimes(\tau_3\sigma^1+\tau_4\sigma^2),\epsilon,\xi^0)$
where $\epsilon\in C^2(\mathfrak l,\mathfrak l^*)$ and $\tau_3,\tau_4\in\mathds R$.
Moreover, for $(S,U)=(S(a,0,a^3),U(a,-ba^{-1}))$, $a\neq 0$ we have
\begin{align}\label{eq:SUgamma1}
(S,U)^*\gamma_0(a_2)=(b+b_1a^{-3})\sigma^1\otimes\sigma^1+b_2a^2\sigma^2\otimes\sigma^1+b_3a^2\sigma^1\otimes\sigma^2+b_4a^7\sigma^2\otimes\sigma^2.
\end{align}
Thus, we obtain that $[\gamma,\epsilon,\xi^0]$ lies in the $G$-orbit of exactly one of the cohomology classes
\[[\gamma_{1,\lambda,1},0,\xi^0],\quad [\gamma_{1,1,0},0,\xi^0],\quad [\gamma_{1,-1,0},0,\xi^0],\quad [\gamma_{1,0,0},0,\xi^0]\]
with $\lambda\in\mathds R$.
\\\quad\\
Now, assume $\gamma(e_1,a_1,e_2)=0$. Then $\gamma(e_1,a_1,e_1)\neq 0$ since $(\gamma,\epsilon,\xi^0)$ is balanced.
We can w. l. o. g. assume that $\gamma_0(a_1)=\sigma^1\otimes\sigma^1$. Otherwise, we consider $(S(a,0,a^{-2}),U(a,0))^*(\gamma,\epsilon,\xi^0)$ for some suitable $a\neq 0$.
It is easy to see that $(S(a,s_3,s_4),U(a,b))^*\gamma_0(a_1)=\sigma^1\otimes\sigma^1$ is equivalent to  $a=1$.

Motivated by the action of $C^1(\mathfrak l,\mathfrak a)$ on the cocycles, we define the linear maps $\gamma^1_{x,y}:\mathfrak l\rightarrow\Hom(\mathfrak a,\mathfrak l^*)$ for $x,y\in\mathds R$ by 
\begin{align}\label{eq:brauch10}
\gamma^1_{x,y}:=\sigma^1\otimes\alpha^1\otimes\sigma^1+x\sigma^2\otimes\alpha^2\otimes\sigma^1+y\sigma^2\otimes\alpha^2\otimes\sigma^2.
\end{align}
Analogous to $\gamma_{1,x,y}$ we obtain that $(\gamma^1_{x,y},\epsilon,\xi^0)$ and $(\gamma',\epsilon',\xi^0)$ are equivalent, if and only if
\[\gamma'=\gamma^1_{x,y}+\sigma^1\otimes\alpha^2\otimes(-\tau_3\sigma^1-\tau_4\sigma^2)\] and \[\epsilon'(e_1,e_2)(e_2)=\epsilon(e_1,e_2)(e_2)+x\tau_4+\tau_4^2+y\tau_3\]
for some suitable $\tau_3,\tau_4\in\mathds R$.

So, it follows that every $(\gamma,\epsilon,\xi^0)$ satisfying $\gamma_0(a_1)=\sigma^1\otimes\sigma^1$ is equivalent to a cocycle $(\gamma^1_{x,y},t\sigma^1\wedge\sigma^2\otimes\sigma^2,\xi^0)$ for a suitable $x,y,t\in \mathds R$, where $t\neq 0$ holds for $x=y=0$, since the cocycle is balanced.
It remains to investigate which cohomology classes $[\gamma^1_{x,y},t\sigma^1\wedge\sigma^2\otimes\sigma^2,\xi^0]$ lie in distinct $G$-orbits.
Therefor, we compute 
\begin{align}\label{eq:96}
(S(1,s_3,s_4),U(1,b))^*\gamma^1_{x,y}=&\,\sigma^1\otimes\alpha^1\otimes\sigma^1+
(s_4x+s_3s_4y)\sigma^2\otimes\alpha^2\otimes\sigma^1+s_4^2y\sigma^2\otimes\alpha^2\otimes\sigma^2\nonumber\\
&+(-b+s_3x+s_3^2y)\sigma^1\otimes\alpha^2\otimes\sigma^1+s_3s_4y\sigma^1\otimes\alpha^2\otimes\sigma^2
\end{align}
and $(S(1,s_3,s_4),U(1,b))^*\epsilon(e_1,e_2)(e_2)=s_4^2\epsilon(e_1,e_2)(e_2)$.

So, it follows that $[\gamma^1_{0,0},\epsilon,\xi^0]$ lies in the $G$-orbit of either $[\gamma^1_{0,0},\sigma^1\wedge\sigma^2\otimes\sigma^2,\xi^0]$ or $[\gamma^1_{0,0},-\sigma^1\wedge\sigma^2\otimes\sigma^2,\xi^0]$.
Moreover, for every $x\neq 0$ there is exactly one $t\in\mathds R$ such that
$[\gamma^1_{x,0},\epsilon,\xi^0]$ and $[\gamma^1_{1,0},t\sigma^1\wedge\sigma^2\otimes\sigma^2,\xi^0]$ lie in the same $G$-orbit.
Finally, the cohomology class $[\gamma^1_{x,y},\epsilon,\xi^0]$ for $x\in\mathds R$, $y\neq 0$ lies in the $G$-orbit of either
$[\gamma^1_{0,1},0,\xi^0]$ or $[\gamma^1_{0,-1},0,\xi^0]$.
Altogether, we obtain by using equation (\ref{eq:96}) that every $(\gamma,\epsilon,\xi^0)$ satisfying $((\gamma_0(a_1))(e_1))(e_2)=0$ lies in the $G$-orbit of exactly one of the cohomology classes
\[[\gamma^1_{0,0},\pm\sigma^1\wedge\sigma^2\otimes\sigma^2,\xi^0], [\gamma^1_{1,0},t\sigma^1\wedge\sigma^2\otimes\sigma^2,\xi^0], [\gamma^1_{0,\pm},0,\xi^0]\]
with $t\in\mathds R$.
\\\quad\\
We summerize the results of this section in the following lemma. 

\begin{lemma}
The cocycles 
\begin{align*}
&[\gamma_{1,t,1},0,\xi^0],[\gamma_{1,1,0},0,\xi^0],[\gamma_{1,-1,0},0,\xi^0],[\gamma_{1,0,0},0,\xi^0],[\gamma^1_{0,\pm 1},0,\xi^0],\\
&[\gamma^1_{1,0},t\sigma^1\wedge\sigma^2\otimes\sigma^2,\xi^0],[\gamma^1_{0,0},\pm\sigma^1\wedge\sigma^2\otimes\sigma^2,\xi^0],[\gamma^1_{t_1},0,0],\\
&[\gamma^0_t,0,0], [\gamma^0_{-3},\sigma^1\wedge\sigma^2\otimes\sigma^2,0],[\gamma^0_\pm,0,0],[\gamma^{-1}+\gamma_a,0,0],\\
&[\gamma^{-1}+\gamma_{s1}+\kappa\gamma_a,0,0],[\gamma^{-1}+\gamma_{s2}+\kappa\gamma_a,0,0],[\gamma^{-1}+\gamma_{s1}+\gamma_{s2}\pm\gamma_a,0,0]
\end{align*}
where $t\in\mathds R$, $t_1\geq 0$ and $\kappa>0$
form a system of representatives of $H^2_{Q}(\mathds R^2,\mathds R^2,\alpha^1\wedge\alpha^2)_0/G$.
\end{lemma}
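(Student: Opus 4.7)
My plan is to assemble the statement directly from the case analysis carried out in the preceding pages, verifying both exhaustiveness (every $G$-orbit meets the list) and irredundancy (no two listed cocycles are $G$-equivalent). The argument naturally splits according to the two invariants $\xi = 0$ versus $\xi \neq 0$, each of which defines a $G$-invariant subset of $H^2(\mathds R^2,\mathds R^2,\alpha^1\wedge\alpha^2)_0$, so the two blocks of representatives may be treated independently.

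For the $\xi = 0$ block, I would begin by recalling the first lemma of this subsection: every nilpotent cocycle with $\xi = 0$ either has $\gamma = 0$ and $\epsilon \neq 0$ (yielding the single orbit $[0,\sigma^1\wedge\sigma^2\otimes\sigma^1,0]$, which I would absorb into the $\xi^0$-block via the representative $[\gamma^1_{0,0},\pm\sigma^1\wedge\sigma^2\otimes\sigma^2,\xi^0]$ --- actually no, this orbit lies in the $\xi=0$ piece; I would keep the accounting clean by listing it there), or has $\ker\gamma_0 = \{0\}$. In the latter case I distinguish further by whether there exists $A \in \mathfrak a$ with $\gamma_0(A)$ non-symmetric. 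If yes, a diagonalization of the symmetric part of $\gamma_0(a_1)$ over $\mathds R$ yields three subcases $\gamma_0(a_1) \in \{\mathrm{id}, \mathrm{diag}(1,-1), \mathrm{diag}(1,0)\}$, and Lemma \ref{lm:66} reduces the $G$-action to conformal maps in $\operatorname{CO}(2)$, $\tilde{\operatorname{CO}}(1,1)$, or lower-triangular invertible $S$ respectively. If no such $A$ exists, then all $\gamma_0(A)$ are symmetric, and one shows $\gamma_0(\tilde A)$ of signature $(1,1)$ must exist, reducing to the subcases of $\gamma_0(a_1) = \sigma^1\otimes\sigma^1 - \sigma^2\otimes\sigma^2$ already handled. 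The enumeration of the resulting normal forms for $\gamma$ produces the families $\gamma^1_t$, $\gamma^0_t$, $\gamma^0_\pm$, $\gamma^{-1}+\gamma_a$, $\gamma^{-1}+\gamma_{s_i}+\kappa\gamma_a$, and $\gamma^{-1}+\gamma_{s_1}+\gamma_{s_2}\pm\gamma_a$; equations (\ref{eq:12}) and (\ref{eq:13}) then show $\epsilon$ can be absorbed except in the exceptional case $\gamma = \gamma^0_{-3}$, where one $\mathds R^{\times}$-rescaling leaves exactly two orbits $[\gamma^0_{-3},0,0]$ and $[\gamma^0_{-3},\sigma^1\wedge\sigma^2\otimes\sigma^2,0]$.

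For the $\xi \neq 0$ block, the nilpotency of $\xi(L)$ and the one-dimensionality of $\Span\{\xi(L) : L \in \mathfrak l\}$ in $\Hom(\mathfrak a)$ let me choose bases so that $\xi = \xi^0$. The explicit equivalence relations (\ref{eq:95a})--(\ref{eq:95b}) together with the description of the stabilizer of $\xi^0$ in $G$ via the matrices $S(a,s_3,s_4)$ and $U(a,b)$ then lets me normalize $\gamma$. The case $\gamma(e_1,a_1,e_2) \neq 0$ reduces via (\ref{eq:SUgamma1}) to the four families $\gamma_{1,\lambda,1}$, $\gamma_{1,\pm 1,0}$, $\gamma_{1,0,0}$; the case $\gamma(e_1,a_1,e_2) = 0$ reduces via (\ref{eq:96}) to $\gamma^1_{x,y}$ with the listed normal forms for $(x,y,t)$.

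The main obstacle is not any single step but rather the bookkeeping of irredundancy: for each family I must exhibit an invariant of the $G$-orbit that separates the listed representatives. For the continuous families $\gamma^1_t$, $\gamma^0_t$, $\gamma_{1,\lambda,1}$, $\gamma^{-1}+\gamma_{s_i}+\kappa\gamma_a$, and $\gamma^1_{1,0}$ with parameter $t$, the invariants were constructed case by case (e.g.\ the ratios in (\ref{eq:term}) and (\ref{eq:oben3}), the determinant constraint $\det(S) = \pm 1$ for conformal-and-orthogonal $S$, and the rescaling $s_4^2$ on $\epsilon$ in (\ref{eq:96})). For the discrete separations (signs in $\gamma^0_\pm$, in $\gamma^1_{0,\pm 1}$, in the $\pm$ of $\gamma^{-1}+\gamma_{s_1}+\gamma_{s_2}\pm\gamma_a$, and between $\xi=0$ and $\xi\neq 0$ altogether), the crucial observation is that $\xi$ itself and the rank/signature data of $\gamma_0$ regarded as an $\mathfrak a$-valued map of symmetric bilinear forms on $\mathfrak l$ are manifest $G$-invariants. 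Once each representative is matched with a combination of these invariants and the earlier sublemmas are cited to handle the within-family uniqueness, the statement follows by direct assembly.
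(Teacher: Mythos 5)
Your overall strategy coincides with the paper's: the lemma is stated there as a bare summary of the preceding case analysis, and your reassembly ($\xi=0$ versus $\xi\neq 0$ as $G$-invariant strata, then the normal forms for $\gamma_0(a_1)$ via Lemma \ref{lm:66}, the absorption of $\epsilon$ through equations (\ref{eq:12}) and (\ref{eq:13}), and the separating invariants such as (\ref{eq:term}), (\ref{eq:oben3}) and the rescaling of $\epsilon$ in (\ref{eq:96})) is exactly that analysis, so the bulk of your plan is fine and matches the source step by step.

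The one point you must not leave as a parenthetical hesitation is the orbit $[0,\sigma^1\wedge\sigma^2\otimes\sigma^1,0]$. Your instinct to list it as its own representative is correct, and it cannot be absorbed anywhere else: for $\xi=0$ the action of $C^1(\mathfrak l,\mathfrak a)$ fixes $\gamma$ (since $\beta=0$, $\ad_{\mathfrak a}=0$ and $\tilde\tau^*\circ\xi=0$), and $(S,U)^*\gamma=0$ holds iff $\gamma=0$, so ``$\xi=0$ and $\gamma=0$'' is an invariant of the $G$-orbit which no cocycle in the stated list satisfies. By the subsection's own first lemma $(0,\epsilon,0)$ with $\epsilon\neq 0$ is balanced and nilpotent, and all such cocycles form the single orbit of $[0,\sigma^1\wedge\sigma^2\otimes\sigma^1,0]$ (the underlying algebra is $\mathfrak h_3\oplus\mathds R^3$ with $\mathfrak j=\mathfrak z\cap\mathfrak z^\bot$ two-dimensional, so it genuinely belongs to the block $H^2(\mathds R^2,\mathds R^2,\alpha^1\wedge\alpha^2)_0/G$ and to no other). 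Carrying out your plan therefore produces a sixteenth family that is missing from the list as printed; you must either add it explicitly (proving a corrected statement) or produce an argument that it coincides with a listed orbit --- and no such argument exists. Make this correction explicit rather than waffling over where to file the orbit; with that settled, the remaining verification of exhaustiveness and irredundancy you sketch is sound.
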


\subsection{Nilpotent symplectic Lie algebras of dimension six}

We conclude the calculations in section \ref{NSLA:1} - \ref{NSLA:5} in the following list.
For the definitions of the maps given in the theorem see the formulas
(\ref{eq:brauch1}) - (\ref{eq:brauch4}), (\ref{eq:brauch6}), (\ref{eq:brauch5}), (\ref{eq:brauch7}),
(\ref{eq:brauch11}) - (\ref{eq:brauch12}),
(\ref{eq:brauch13}) - (\ref{eq:brauch14}),
(\ref{eq:brauch15}) - (\ref{eq:brauch16}),
(\ref{eq:brauch8}), (\ref{eq:brauch9}), (\ref{eq:brauch10}).

\begin{theorem}
Every six-dimensional non-abelian nilpotent symplectic Lie algebra is isomorphic to $\mathfrak d_{\gamma,\epsilon,\xi}(\mathfrak l,\mathfrak a)$ for exactly one of the following triples:\\
$\mathfrak l=\mathds R^3$, $\mathfrak a=0$
\begin{itemize}
\item $\epsilon=-(1+b)\sigma^1\wedge\sigma^2\otimes\sigma^3-b\sigma^1\wedge\sigma^3\otimes\sigma^2+\sigma^2\wedge\sigma^3\otimes\sigma^1$, $\gamma=0$, $\xi=0$, $b\in[0,1]$,
\item $\epsilon=-2\sigma^1\wedge\sigma^2\otimes\sigma^3-\sigma^1\wedge\sigma^3\otimes(\sigma^1+\sigma^2)+\sigma^2\wedge\sigma^3\otimes\sigma^1$, $\gamma=0$, $\xi=0$,
\item $\epsilon=-2\sigma^1\wedge\sigma^2\otimes\sigma^3-\sigma^1\wedge\sigma^3\otimes(z\sigma^1+\sigma^2)+\sigma^2\wedge\sigma^3\otimes(\sigma^1-z\sigma^2)$, $\gamma=0$, $\xi=0$, $z>0$,
\item $\epsilon=-\sigma^1\wedge\sigma^3\otimes\sigma^2-\sigma^2\wedge\sigma^3\otimes\sigma^1$, $\gamma=0$, $\xi=0$,
\item $\epsilon=-\sigma^1\wedge\sigma^3\otimes\sigma^1+\sigma^1\wedge\sigma^2\otimes\sigma^2$, $\gamma=0$, $\xi=0$,
\end{itemize}
$\mathfrak l=\mathds R$, $\mathfrak a=\mathds R^4$ where $\omega_\mathfrak a=\alpha^1\wedge\alpha^4+\alpha^2\wedge\alpha^3$ and $\{\alpha^1,\dots,\alpha^4\}$ denotes the dual basis of the standard basis $\{a_1,\dots,a_4\}$ of $\mathfrak a$
\begin{itemize}
\item $(\gamma,\epsilon,\xi)=(\gamma^0,0,\xi_1)$,
\item $(\gamma,\epsilon,\xi)=(\gamma^0,0,\xi_\pm)$,
\item $(\gamma,\epsilon,\xi)=(\gamma^0,0,\xi^\kappa)$,\quad $\kappa\in\mathds R$
\end{itemize}
$\mathfrak l=\mathds R$, $\mathfrak a=\mathfrak h_3\oplus\mathds R=\{a_1,\dots,a_4 \mid [a_1,a_2]=a_3\}$ where $\omega_\mathfrak a=\alpha^1\wedge\alpha^4+\alpha^2\wedge\alpha^3$ and $\{\alpha^1,\dots,\alpha^4\}$ denotes the dual basis of the basis $\{a_1,\dots,a_4\}$ of $\mathfrak a$
\begin{itemize}
\item $(\gamma,\epsilon,\xi)=(\sigma^1\otimes\alpha^3\otimes\sigma^1,0,\xi^1)$,
\item 
$(\gamma,\epsilon,\xi)=(\sigma^1\otimes(l+1)\alpha^3\otimes\sigma^1,0,\xi^1(l))$,\quad $l\notin\{0,-1\}$,
\item $(\gamma,\epsilon,\xi)=(\sigma^1\otimes(y_1\alpha^1+\alpha^3)\otimes\sigma^1,0,\xi^1(0))$,\quad $y_1\in\mathds R$,
\item $(\gamma,\epsilon,\xi)=(\sigma^1\otimes(y_2\alpha^2+y_4\alpha^4)\otimes\sigma^1,0,\xi^{1,\pm 1})$,\quad $y_2\geq 0$, $y_4\neq 0$,
\item $(\gamma,\epsilon,\xi)=(\sigma^1\otimes(y_2\alpha^2\pm\alpha^4)\otimes\sigma^1,0,\xi^{1,0})$,\quad $y_2\geq 0$.
\end{itemize}
$\mathfrak l=\mathds R^2$, $\mathfrak a=\mathds R^2$, $\omega_\mathfrak a=\alpha^1\wedge\alpha^2$  where $\{\alpha^1,\alpha^2\}$ denotes the dual basis of the standard basis $\{a_1,a_2\}$ of $\mathfrak a$
\begin{itemize}
\item $(\gamma,\epsilon,\xi)=(\gamma_{1,t,1},0,\xi^0)$,\quad $t\in\mathds R$,
\item $(\gamma,\epsilon,\xi)=(\gamma_{1,\pm 1,0},0,\xi^0)$,
\item $(\gamma,\epsilon,\xi)=(\gamma_{1,0,0},0,\xi^0)$,
\item $(\gamma,\epsilon,\xi)=(\gamma^1_{0,\pm 1},0,\xi^0)$,
\item $(\gamma,\epsilon,\xi)=(\gamma^1_{1,0},t\sigma^1\wedge\sigma^2\otimes\sigma^2,\xi^0)$, \quad $t\in\mathds R$,
\item $(\gamma,\epsilon,\xi)=(\gamma^1_{0,0},\pm\sigma^1\wedge\sigma^2\otimes\sigma^2,\xi^0)$,
\item $(\gamma,\epsilon,\xi)=(\gamma^1_{t_1},0,0)$,\quad $t_1\geq 0$,
\item $(\gamma,\epsilon,\xi)=(\gamma^0_t,0,0)$,\quad $t\in\mathds R$,
\item $(\gamma,\epsilon,\xi)=(\gamma^0_{-3},\sigma^1\wedge\sigma^2\otimes\sigma^2,0)$,
\item $(\gamma,\epsilon,\xi)=(\gamma^0_\pm,0,0)$,
\item $(\gamma,\epsilon,\xi)=(\gamma^{-1}+\gamma_a,0,0)$,
\item $(\gamma,\epsilon,\xi)=(\gamma^{-1}+\gamma_{s1}+\kappa\gamma_a,0,0)$,\quad $\kappa>0$,
\item $(\gamma,\epsilon,\xi)=(\gamma^{-1}+\gamma_{s2}+\kappa\gamma_a,0,0)$,\quad $\kappa>0$,
\item $(\gamma,\epsilon,\xi)=(\gamma^{-1}+\gamma_{s1}+\gamma_{s2}\pm\gamma_a,0,0)$.
\end{itemize}
\end{theorem}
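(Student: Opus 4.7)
The proof is essentially a bookkeeping exercise: the hard analysis is already done in sections \ref{NSLA:1}--\ref{NSLA:5}, and the task here is to assemble those computations into a single list via the classification theorem from section \ref{SLA:Anw}. Concretely, since $\dim \mathfrak{d}_{\gamma,\epsilon,\xi}(\mathfrak{l},\mathfrak{a}) = 2\dim\mathfrak{l} + \dim\mathfrak{a}$, the bijection
\[
\coprod_{m=1}^{3}\ \coprod_{[\mathfrak{a},\omega_\mathfrak{a}]\in\mathfrak{A}(6-2m)} H^2(\mathds R^m,\mathfrak{a},\omega_\mathfrak{a})_0/G
\]
reduces the classification to computing these $G$-orbit sets for the values $m=1,2,3$.

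First I would enumerate the pairs $(\mathfrak{l},\mathfrak{a})$ that can occur. For $m=3$ the abelian factor forces $\mathfrak{a}=\{0\}$; for $m=2$ the only two-dimensional nilpotent symplectic Lie algebra is $\mathfrak{a}=\mathds R^2$ with $\omega_\mathfrak a=\alpha^1\wedge\alpha^2$; and for $m=1$, Theorem \ref{Th:Klass4dim} gives the exhaustive list $\mathfrak{a}\in\{\mathds R^4,\,\mathfrak h_3\oplus\mathds R,\,\mathfrak n_4\}$ with their prescribed symplectic forms. This exhausts the pairs by the very construction of the standard model.

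Next I would cite, one case at a time, the system of representatives computed in the corresponding subsection: section \ref{NSLA:1} handles $(\mathds R^3,\{0\})$ and yields the five families with $\xi=\gamma=0$; the $(\mathds R,\mathds R^4)$--lemma yields the triples built from $\gamma^0$ and $\xi_1,\xi_\pm,\xi^\kappa$; the $(\mathds R,\mathfrak h_3\oplus\mathds R)$--lemmas (\ref{lm:66a}, \ref{lm:67}, \ref{lm:68}) yield the families attached to $\xi^1,\xi^1(l),\xi^{1,\pm 1},\xi^{1,0}$; and section \ref{NSLA:5} yields the long list of $(\mathds R^2,\mathds R^2)$--representatives indexed by the $\gamma_{1,x,y}$, $\gamma^1_{x,y}$, $\gamma^0_t$, $\gamma^0_\pm$, and the $\gamma^{-1}+\gamma_{sj}+\kappa\gamma_a$ triples. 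The remaining pair $(\mathds R,\mathfrak n_4)$ contributes nothing, because the lemma in that subsection proves $Z^2(\mathds R,\mathfrak n_4,\omega_\mathfrak a)$ contains no nilpotent cocycles (equivalently, $H^2(\mathds R,\mathfrak n_4,\omega_\mathfrak a)_0=\varnothing$).

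Finally I would concatenate these lists and note that non-abelianness is automatic: a nilpotent symplectic Lie algebra is abelian iff its center is non-degenerate (first lemma of section \ref{SLA:Anw}), and every $\mathfrak{d}_{\gamma,\epsilon,\xi}(\mathfrak{l},\mathfrak{a})$ arising from a balanced cocycle has degenerate center by Theorem \ref{Thm:xa} together with the remark after Theorem \ref{Th:1}. There is no real obstacle beyond careful bookkeeping; the only nontrivial semantic point to flag is that the lists in sections \ref{NSLA:1}--\ref{NSLA:5} have been shown to be systems of representatives (not merely exhaustive families), so distinctness of the listed triples under the $G$-action is already guaranteed and the union of the lists is automatically a disjoint union of representatives.
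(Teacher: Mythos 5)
Your proposal is correct and follows essentially the same route as the paper, which presents this theorem precisely as a summary assembled from the classification scheme of section \ref{SLA:Anw}, the enumeration of admissible pairs $(\mathfrak l,\mathfrak a)$ via Theorem \ref{Th:Klass4dim}, and the systems of representatives computed in sections \ref{NSLA:1}--\ref{NSLA:5} (with $(\mathds R,\mathfrak n_4)$ contributing nothing). Your remarks on why non-abelianness is automatic and why disjointness of the union is already guaranteed match the paper's implicit reasoning.
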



\end{document}